\DeclareMathOperator{\Spec}{Spec}
\DeclareMathOperator{\Gal}{Gal}
\DeclareMathOperator{\id}{id}
\DeclareMathOperator{\End}{End}
\DeclareMathOperator{\Hom}{Hom}
\DeclareMathOperator{\Ext}{Ext}
\DeclareMathOperator{\Rep}{Rep}
\DeclareMathOperator{\Ker}{Ker}
\DeclareMathOperator{\Aut}{Aut} 
\DeclareMathOperator{\Ind}{Ind}
\DeclareMathOperator{\Irr}{Irr}
\DeclareMathOperator{\Image}{Im}
\DeclareMathOperator{\Spa}{Spa}
\DeclareMathOperator{\Lie}{Lie}
\DeclareMathOperator{\Sym}{Sym}
\DeclareMathOperator{\Spd}{Spd}
\DeclareMathOperator{\Perf}{Perf}
\DeclareMathOperator{\Gr}{Gr}
\DeclareMathOperator{\GL}{GL}
\DeclareMathOperator{\SL}{SL}
\DeclareMathOperator{\GSp}{GSp}
\DeclareMathOperator{\GU}{GU}
\DeclareMathOperator{\Bun}{Bun}
\DeclareMathOperator{\lis}{lis}
\DeclareMathOperator{\Ad}{Ad}
\DeclareMathOperator{\Cont}{Cont}
\DeclareMathOperator{\cInd}{c-Ind}
\DeclareMathOperator{\Div}{Div}
\DeclareMathOperator{\Hck}{Hck}
\DeclareMathOperator{\trg}{trg}
\newcommand{\sHom}{\mathscr{H}\!\mathit{om}}
\newcommand{\cHck}{\mathcal{H}\hspace{-0.1em}\mathit{ck}}
\newcommand{\solid}{\mathsmaller{\blacksquare}}
\newcommand{\setmid}{\mathrel{}\middle|\mathrel{}}
\newcommand{\bD}{\mathbb{D}}
\newcommand{\bF}{\mathbb{F}}
\newcommand{\bG}{\mathbb{G}}
\newcommand{\bL}{\mathbb{L}}
\newcommand{\bQ}{\mathbb{Q}}
\newcommand{\bX}{\mathbb{X}}
\newcommand{\bZ}{\mathbb{Z}}
\newcommand{\bfC}{\mathbf{C}}
\newcommand{\cB}{\mathcal{B}}
\newcommand{\cC}{\mathcal{C}}
\newcommand{\cD}{\mathcal{D}}
\newcommand{\cE}{\mathcal{E}}
\newcommand{\cF}{\mathcal{F}}
\newcommand{\cG}{\mathcal{G}}
\newcommand{\cH}{\mathcal{H}}
\newcommand{\cI}{\mathcal{I}}
\newcommand{\cN}{\mathcal{N}}
\newcommand{\cO}{\mathcal{O}}
\newcommand{\cP}{\mathcal{P}}
\newcommand{\cS}{\mathcal{S}}
\newcommand{\cT}{\mathcal{T}}
\newcommand{\cW}{\mathcal{W}}
\newcommand{\cZ}{\mathcal{Z}}
\newcommand{\fm}{\mathfrak{m}}
\newcommand{\fw}{\mathfrak{w}}
\newcommand{\fA}{\mathfrak{A}}
\newcommand{\sC}{\mathscr{C}}
\newcommand{\rmab}{\mathrm{ab}}
\newcommand{\rmur}{\mathrm{ur}}
\newcommand{\ol}{\overline}
\newcommand{\ul}{\underline}
\newcommand{\wh}{\widehat}
\newcommand{\wt}{\widetilde}
\newcommand{\lra}{\longrightarrow}
\newcommand{\cf}{\textit{cf.\ }}
\newtheorem{thm}{Theorem}[section]
\newtheorem{dfn}[thm]{Definition}
\newtheorem{prop}[thm]{Proposition}
\newtheorem{lem}[thm]{Lemma}
\newtheorem{conj}[thm]{Conjecture}
\newtheorem{ex}[thm]{Example}
\newtheorem{rem}[thm]{Remark}
\begin{document}

\title{On the geometrization of the local Langlands correspondence}
\author{Naoki Imai}
\date{} 
\maketitle

\begin{abstract}
This is a survey paper on the geometrization of the local Langlands correspondence by Fargues--Scholze. 
\end{abstract}

\section{Introduction}\label{sec1}
The purpose of this paper is to explain about Fargues--Scholze's monumental paper \cite{FaScGeomLLC} on the geometrization of the local Langlands correspondence, which implements and advances the ideas Fargues gave in \cite{FarGover}. From the viewpoint of the local Langlands correspondence, the main results are the construction of semisimple L-parameters and the formulation of the geometric categorical local Langlands conjecture. 
The former and the latter realize Lafforgue's construction \cite{LafChtred} in the function field case, and a formulation of Arinkin--Gaitsgory's categorical conjecture \cite{ArGaSinggLc} in the geometric Langlands correspondence, respectively, for non-archimedesque local fields.
The most basic idea is to replace the algebraic curves in the geometric Langlands correspondence by Fargues--Fontaine curves, which are arithmetic curves introduced in \cite{FaFoCfv}.
However, unlike algebraic curves, relative Fargues--Fontaine curves do not have structure morphisms, so it is necessary to consider not only the Fargues--Fontaine curve itself but also the moduli space of Cartier divisors on it (\cf Remark \ref{rem:algvsFF}).
When $G$ is a connected reductive algebraic group over a non-Archimedesian local field, the moduli spaces of $G$-bundles on Fargues--Fontaine curves and the moduli spaces of the Cartier divisors above are formulated as spaces in positive characteristics, and we need various concepts of spaces introduced in \cite{SchEtdia} and \cite{ScWeBLp} to deal with them. 
The reason why we may consider the spaces in positive characteristics when we want to deal with the problem of a $p$-adic field in characteristic $0$ is fundamentally due to the tilting equivalence introduced in \cite{SchPerf}, which relates the characteristics $0$ and $p$. 

There are several formulations of the local Langlands correspondence, but the formulation (\cite{BMOBGpar}, \cite{KalSiso}) using the Kottwitz set $B(G)$ of $G$ is most compatible with the geometrization of the local Langlands correspondence explained in this paper. 
This is related to the fact that isomorphisms of $G$-bundles on the Fargues--Fontaine curve can be classified using $B(G)$.
In the formulation using $B(G)$, we take a quasi-split $G$ and consider the inner forms of $G$ determined from the elements of $B(G)$. However, not all connected reductive algebraic groups appear as such inner forms, and a formulation with rigid inner forms \cite{KalRiginn} is necessary to deal with general connected reductive algebraic groups\footnote{However, it is known that the two formulations are equivalent in some sense (\cite{KalRigvsiso}).}. A generalization of the geometrization of the local Langlands correspondence to the rigid inner forms is given in \cite{FargerKal}, but we will not discuss it further in this paper. 

Perhaps the best overview of the geometrization of the local Langlands correspondence is the introduction of \cite{FaScGeomLLC}. The reader who is interested should read it as well, since it is very well-written including historical information. Fargues--Scholze themselves also wrote an overview paper \cite{FaScLangIHES}, in which they explain a Jacobian criterion, one of the key technical points. 
In this paper, we have tried to explain contents which are not covered in these overviews. 
In \cite{HanBeicat}, more advanced topics of the categorical conjecture are discussed.

\section{Local Langlands correspondence}

In this section, we will explain the statement of the local Langlands correspondence. 
First, we explain L-groups.
A Borel pair is a pair of a Borel subgroup and a maximal torus contained in the Borel subgroup of a reductive group scheme \footnote {A reductive group scheme is a flat group scheme whose fibers are connected reductive algebraic groups.}. 
For the splitting reductive group scheme $\cG$ and its Borel pair $(\cB,\cT)$ over a connected scheme, 
we denote the attached based root data\footnote{This is what is called donn\`{e}e radicielle in \cite[XXIII, 1.5]{SGA3-3}. Here we follow the terminology of \cite[1.5]{ConRgrsch}. Also, although \cite[XXIII, 1.5]{SGA3-3} defines the based root data for pinned reductive algebraic group schemes, only Borel pairs are actually used in the construction.} 
by $\mathrm{BR}(\cG,\cB,\cT)$ (\cite[XXIII, 1.5]{SGA3-3}).
For a split torus $\cT$ over a field, we denote by $X^*(\cT)$ and $X_*(\cT)$ the character group and the cocharacter group of $\cT$.
For a Borel pair $(\cB,\cT)$, $(\cB',\cT')$ of a split connected reductive algebra group $\cG$ over a field $k$, let $g \in \cG(k)$ such that $\Ad (g)(\cB,\cT)=(\cB',\cT')$, then
the isomorphism $\mathrm{BR}(\cG,\cB,\cT)\cong \mathrm{BR}(\cG,\cB',\cT')$ induced by $\Ad (g)$ is independent of the choice of $g$. 
Using these as transition morphisms, we put 
\[
 \mathrm{BR}(\cG)=\mathrm{colim}_{(\cB,\cT)} \mathrm{BR}(\cG,\cB,\cT) . 
\] 
We write 
$\mathrm{BR}(\cG)=(\bX_{\cG},\Phi_{\cG},\Delta_{\cG},\check{\bX}_{\cG},\check{\Phi}_{\cG},\check{\Delta}_{\cG})$, and define its dual by 
\[
\mathrm{BR}(\cG)^{\vee}=(\check{\bX}_{\cG},\check{\Phi}_{\cG},\check{\Delta}_{\cG},\bX_{\cG},\Phi_{\cG},\Delta_{\cG}) 
\]
(\cf \cite[\S 1.5]{ConRgrsch}). 

Let $F$ be a non-Archimedean local field, $\cO_F$ be a ring of integers in $F$, and $\fm_F$ be the maximal ideal of $\cO_F$.
Let $k_F$ be the residue field of $F$ and let $q$ be the order of $k_F$.
Let $F^{\mathrm{sep}}$ be a separable closure of $F$ and $k_F^{\mathrm{sep}}$ be its residue field. 
Let $\sigma \in \Gal (k_F^{\mathrm{sep}}/k_F)$ be the $q$-th power Frobenius map.
Let $I_F$ be the kernel of the natural surjection $\Gal (F^{\mathrm{sep}}/F)\to \Gal (k_F^{\mathrm{sep}}/k_F)$ and $W_F$ be the inverse image of $\langle \sigma \rangle$ under this sujection. 
We equip $I_F$ with the topology induced by $\Gal (F^{\mathrm{sep}}/F)$, and
$W_F$ with the topology such that $I_F \subset W_F$ is an open subgroup.
We call $I_F$ the inertia group of $F$ and $W_F$ the Weil group of $F$. 
For $w \in W_F$, we define $d_F (w)\in \mathbb{Z}$ so that the image of $w$ in $\Gal (k_F^{\mathrm{sep}}/k_F)$ is $\sigma^{-d_F (w)}$. 

Let $G$ be a connected reductive algebraic group on $F$.
Then $W_F$ acts naturally on $\mathrm{BR}(G_{F^{\mathrm{sep}}})$.
We take a pinned reductive algebraic group scheme $(\wh{G},\wh{B},\wh{T},\{\wh{X}_{\alpha} \}_{\alpha \in \Delta_{\wh{G}}})$ and an isomorphism $\mathrm{BR}(\wh{G},\wh{B},\wh{T})\cong \mathrm{BR}(G_{F^{\mathrm{sep}}})^{\vee}$ using the equivalence of categories \cite[XXV, Th\'{e}or\`{e}me 1.1]{SGA3-3}. 
These are unique up to uniquely determined isomorphisms. 
By 
\begin{align*}
 W_F \to \Aut (\mathrm{BR}(G_{F^{\mathrm{sep}}})) \cong \Aut (\mathrm{BR}(G_{F^{\mathrm{sep}}})^{\vee}) &\cong \Aut (\mathrm{BR}(\wh{G},\wh{B},\wh{T})) \\ 
 &\cong \Aut (\wh{G},\wh{B},\wh{T},\{\wh{X}_{\alpha} \}_{\alpha \in \Delta_{\wh{G}}}) \hookrightarrow \Aut (\wh{G}) 
\end{align*}
we define the action of $W_F$ on $\wh{G}$. 
We call $\wh{G}$ the dual group of $G$ over $\bZ$. 
Let $\cW_F$ be the Weil group scheme (\cite[(4.1)]{TatNtb}) over $\bZ$. Then $\cW_F$ acts on $\wh{G}$. 
We put ${}^L G=\wh{G} \rtimes \cW_F$ and call it the L-group of $G$. 

Next, we explain the L-parameters following \cite[\S 8]{BorAutL}. 
Let $\bfC$ be an algebraically closed field of characteristic $0$.

\begin{dfn}
Let $\cP$ be a subgroup scheme of ${}^L G_{\bfC}$. 
If $\cP \hookrightarrow {}^L G_{\bfC} \to \cW_{F,\bfC}$ is a surjection and $\cP \cap \wh{G}_{\bfC}$ is a parabolic subgroup of $\wh{G}_{\bfC}$, then we say that $\cP$ is a parabolic subgroup of ${}^L G_{\bfC}$. 
\end{dfn}

Let $\cP$ be a parabolic subgroup of ${}^L G_{\bfC}$.
For $w \in W_F$, taking $(g,w) \in \cP (\bfC)$, we have 
$w(\cP \cap \wh{G}_{\bfC})=\Ad(g^{-1})(\cP \cap \wh{G}_{\bfC})$ because $\Ad((g,w))(\cP \cap \wh{G}_{\bfC})=\cP \cap \wh{G}_{\bfC}$. 
Hence the $\wh{G}(\bfC)$-conjugacy class of $\cP \cap \wh{G}_{\bfC}$ is $W_F$-stable.
Using the correspondence between the conjugacy classes of parabolic subgroups and sets of simple roots (\cite[5.14]{BoTiGred}), the  $\wh{G}(\bfC)$-conjugacy class of $\cP \cap \wh{G}_{\bfC}$ and $\Delta_{\wh{G}_{\bfC}} \cong \check{\Delta}_{G_{F^{\mathrm{sep}}}} \cong \Delta_{G_{F^{\mathrm{sep}}}}$ determine a $W_F$-stable $G_{F^{\mathrm{sep}}}$-conjugacy class of parabolic subgroups of $G_{F^{\mathrm{sep}}}$.
If this $G(F^{\mathrm{sep}})$-conjugacy class contains a parabolic subgroup of $G$ defined over $F$, 
we say that $G(F^{\mathrm{sep}})$-conjugacy class is relevant. 
If $G$ is quasi-split, then any parabolic subgroup of ${}^L G_{\bfC}$ is relevant (\cite[6.3]{BoTiGred}). 

\begin{dfn}
We define a finite Galois extension $F^*$ of $F$ by $W_{F^*}=\Ker (W_F \to \Aut (\wh{G}))$. 
We say that $g \in {}^L G (\bfC)$ is semisimple if its image in $\wh{G}_{\bfC} \rtimes \Gal (F^*/F)$ is semisimple. 
\end{dfn}

\begin{dfn}\label{dfn:SLLp}
An L-parameter of $\SL_2$-type in $\bfC$-coefficients for $G$ is the morphism 
$\phi \colon \SL_{2,\bfC} \times \cW_{F,\bfC} \to {}^L G_{\bfC}$ of the group scheme over $\bfC$ that satisfies the following: 
\begin{enumerate}
	\item 
	$\phi$ is compatible with the projection on $\cW_{F,\bfC}$.
	\item \label{en:Frobss}
	For any element $w$ of $\cW_F(\bfC)$, $\phi (1,w)$ is semisimple.
	\item\label{en:relev}
	If $\phi$ factors through a parabolic subgroup $\cP$ of ${}^L G_{\bfC}$, then $\cP$ is relevant.
\end{enumerate}
We say that two L-parameters are equivalence if they are conjugate under $\wh{G}(\bfC)$. 
\end{dfn}

The condition of Definition \ref{dfn:SLLp} \ref{en:Frobss} is equivalent to the condition that the image of a lift of the Frobenius element is semisimple, which is called Frobenius semisimplicity. 

\begin{dfn}
Let $\phi$ be an L-parameter and $S_{\phi}=Z_{\wh{G}_{\bfC}}(\phi )$. 
\begin{enumerate}
\item
We say that $\phi$ is discrete if $S_{\phi}/Z(\wh{G}_{\bfC})^{W_F}$ is finite.
\item 
We say that $\phi$ is cuspidal if $\phi$ is discrete and the restriction of $\phi$ to $\SL_{2,\bfC}$ is trivial.
\end{enumerate}
\end{dfn}

Let $\Phi (G)$ be the set of isomorphisms classes of L-parameters of $G$. For a locally profinite group $\mathsf{G}$, let $\Pi(\mathsf{G})$ denote the isomorphism classes of smooth\footnote{The action of $\mathsf{G}$ on an additive group $M$ is smooth if for any $m \in M$, the stabilizer of $m$ is an open subgroup of $G$.} irreducible representations over $\bfC$ of $\mathsf{G}$. 
First, we state the claim of the local Langlands correspondence in a crude form.

\begin{conj}\label{conj:LL}
We fix $c \in \bfC$ such that $c^2=q$. 
Then, there exists a finite-to-one natural surjection  
\[
 \mathrm{LL}_G \colon \Pi (G(F)) \to \Phi (G) . 
\] 
\end{conj}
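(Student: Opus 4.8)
The plan is to follow the strategy of Fargues--Scholze: to construct $\mathrm{LL}_G$ from the $\ell$-adic cohomology of the moduli stack $\Bun_G$ of $G$-bundles on the Fargues--Fontaine curve, to attach to each irreducible smooth $\pi$ a semisimple $W_F$-parameter, then to upgrade it to an honest L-parameter of $\SL_2$-type, and finally to verify surjectivity and finiteness of the fibers, the last point resting on the previously established instances of the correspondence. One fixes an isomorphism $\bfC \cong \ol{\bQ}_\ell$ for some prime $\ell \neq p$, compatible with the chosen $c$ with $c^2=q$, so that smooth $\bfC$-representations of $G(F)$ acquire a sheaf-theoretic meaning. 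The geometric input is the small v-stack $\Bun_G$, whose points are $G$-bundles on the relative Fargues--Fontaine curve; it is stratified by the Kottwitz set $B(G)$, and the stratum attached to the trivial class is $[\ast/\ul{G(F)}]$, which is the channel through which $\Pi(G(F))$ enters. On the derived category $D(\Bun_G,\ol{\bQ}_\ell)$ one has Hecke operators indexed by $\ol{\bQ}_\ell$-representations of ${}^L G$, built from the geometric Satake equivalence on the $B_{\mathrm{dR}}^+$-affine Grassmannian; since the Fargues--Fontaine curve has no structure morphism one works over the moduli space $\Div^1$ of degree-one Cartier divisors (isomorphic to $(\Spd \breve{F})/\varphi^{\bZ}$, with $\breve{F}$ the completed maximal unramified extension of $F$), and the commuting partial Frobenii on iterated Hecke correspondences supply the rigidity the argument needs.

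Next I would extract the semisimple parameter. For a fixed irreducible $\pi$, pull the associated object back to $[\ast/\ul{G(F)}]$ and apply the excursion operators attached to data $(V,(\gamma_i)_{1\le i\le n},\alpha,\beta)$, with $V \in \Rep(({}^L G)^n)$, $\gamma_i \in W_F$, and $\alpha,\beta$ morphisms to and from the trivial representation; by Schur's lemma these act by scalars, hence define a character of the excursion algebra. V.\ Lafforgue's reconstruction lemma, in the form adapted by Fargues--Scholze to the present setting, produces from this character a unique $\wh{G}(\ol{\bQ}_\ell)$-conjugacy class of semisimple parameters $\phi_\pi^{\mathrm{ss}} \colon W_F \to {}^L G(\ol{\bQ}_\ell)$, independent of the auxiliary choices, and this defines $\mathrm{LL}_G$ on objects. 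Naturality --- compatibility with parabolic induction, with twists by characters of $W_F$ and of the centre, with products, and with the transfer to inner forms --- is then read off from the corresponding geometric functorialities of $\Bun_G$ and of the Hecke operators.

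It remains to verify the three conditions of Definition \ref{dfn:SLLp} together with surjectivity and finiteness of the fibers. The monodromy part of the parameter --- the $\SL_2$-factor --- is invisible to the construction above and must be supplied by hand: one normalizes $\phi_\pi$ by a Frobenius-weight / Jacobson--Morozov recipe so that condition \ref{en:Frobss} holds and the restriction to $\SL_{2,\bfC}$ records the weight filtration. Relevance, condition \ref{en:relev}, is forced by the fact that $\pi$ is a representation of the specified inner form $G(F)$, which restricts which parabolic subgroups of ${}^L G_{\bfC}$ can carry its image. For surjectivity one invokes the known constructions of L-parameters: local class field theory and the work of Harris--Taylor/Henniart/Scholze for $\GL_n$, Arthur's endoscopic classification for quasi-split classical groups, and, in general, the production, for each $\phi \in \Phi(G)$, of a nonzero object of $D(\Bun_G,\ol{\bQ}_\ell)$ on which the Hecke operators act through $\phi$ and which is supported on the relevant strata. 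Finiteness of the fibers is expected to reduce to finiteness of L-packets, a fiber being acted on by $\pi_0(S_\phi)$ and controlled by finiteness of the relevant Bernstein data.

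The principal obstacle is exactly this last cluster of points, and it is genuinely unresolved in general. The geometric machinery delivers only the \emph{semisimplified} parameter $\phi_\pi^{\mathrm{ss}}$; canonically refining it to an $\SL_2$-type parameter (equivalently, controlling the monodromy operator and Frobenius semisimplicity), proving that the resulting assignment is \emph{surjective} onto all of $\Phi(G)$, and showing that its fibers are finite cannot be achieved by the construction alone. Each of these requires either compatibility theorems matching $\phi_\pi^{\mathrm{ss}}$ with independently built correspondences --- which is how the $\GL_n$ and classical-group cases feed into the surjectivity argument --- or substantially new input, such as a categorical enhancement of the conjecture from which the numerical statement can be deduced. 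Consequently a complete proof of Conjecture \ref{conj:LL} at this level of generality is not available, and what the remainder of the paper explains is the construction of $\phi_\pi^{\mathrm{ss}}$ and the formulation of the geometric categorical local Langlands conjecture rather than a proof of the present crude statement.
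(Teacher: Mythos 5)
You have correctly identified the essential point: the statement is labelled as a \emph{conjecture} and the paper does not prove it --- nor does it claim to. It is stated as background for the survey, with a pointer to \cite[\S 10]{BorAutL} for the expected properties of $\mathrm{LL}_G$. What Fargues--Scholze establish unconditionally, and what the paper later explains, is only the construction of the \emph{semisimple} L-parameter $\pi \mapsto \phi_\pi^{\mathrm{ss}}$ via excursion operators on $\cD_{\lis}(\Bun_G,\Lambda)$; this does not by itself land in $\Phi(G)$ as defined in Definition \ref{dfn:SLLp}, nor does it yield surjectivity or finiteness of fibers. Your closing paragraph, which says exactly this, is the correct reading of the paper; the preceding three paragraphs are a reasonable sketch of the unconditional part of the construction but should not be mistaken for progress toward a proof of the full conjecture.

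One small caution on a point of substance: relevance (condition \ref{en:relev} of Definition \ref{dfn:SLLp}) is \emph{not} automatically ``forced'' by the construction in the way you suggest. The excursion construction factors through $\cO(Z^1(W_F,\wh{G}))^{\wh{G}}$ and sees only the quasi-split L-group, so showing that the parameter attached to a representation of a given inner form is relevant for that inner form is a genuine compatibility statement, not a formal consequence. Likewise, the passage from the semisimple parameter to a parameter of $\SL_2$-type is not canonical without extra input (as the paper notes in discussing \cite{BMIYJMmor}, not every $\ell$-adic parameter arises from one of $\SL_2$-type without the Frobenius-semisimplicity hypothesis). These are consistent with your final assessment that the conjecture remains open, but the intermediate paragraphs understate the difficulty of these two steps.
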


For the various expected properties that $\mathrm{LL}_G$ of Conjecture \ref{conj:LL} should satisfy, see \cite[\S 10]{BorAutL}.
For $\pi \in \Pi (G(F))$, we call $\mathrm{LL}_G(\pi)$ the L-parameter of $\pi$. Furthermore, the pullback of  $\mathrm{LL}_G(\pi)$ under 
\[
 \SL_{2,\bfC} \times \cW_{F,\bfC} \to \SL_{2,\bfC} \times \cW_{F,\bfC} ;\ (g,w) \mapsto \left( \begin{pmatrix}
 c^{-d_F (w)} & 0 \\ 0 & c^{d_F (w)}
 \end{pmatrix},w \right) 
\]
is called the semisimple L-parameter of $\pi$\footnote{We define it this way because we want to consider an L-parameter obtained from the corresponding Weil--Deligne L parameters by forgetting the monodromy operator.} 

Next, we explain a refinement of the local Langlands correspondence.
Let $\breve{F}$ denote the completion of the maximal unramified extension $F^{\mathrm{ur}}$ of $F$.
The $q$-th power Frobenius map $\sigma$ can be regarded as an element of $\Gal (F^{\mathrm{ur}}/F)$ by the isomorphism $\Gal (F^{\mathrm{ur}}/F)\cong \Gal (F^{\mathrm{sep}}/k_F)$, and act on $\breve{F}$ naturally. 
For $b,b' \in G(\breve{F})$, we say that $b$ and $b'$ are $\sigma$-conjugate if there exists some $g \in G(\breve{F})$ such that $b'=g b \sigma (g)^{-1}$. 
The set of $\sigma$-conjugacy classes in $G(\breve{F})$ is denoted by $B(G)$, and is called the Kottwitz set of $G$. 
There exists a map 
\[
\kappa_G \colon B(G) \to X^* (Z(\wh{G}_{\bfC})^{W_F})
\]
constructed in \cite[Lemma 6.1]{KotShlam}, which is called the Kottwitz map. 
For $b \in G(\breve{F})$, we define an algebraic group $G_b$ over $F$ by 
\[
G_b (R) =\{ g \in G(R \otimes_F \breve{F}) \mid \Ad(b)(\sigma (g))=g \}. 
\]
If $G_b$ is an inner form of $G$, we say that $b \in G(\breve{F})$ is a basic element. 
The set of $\sigma$-conjugacy classes of the basic elements of $G(\breve{F})$ is denoted by $B(G)_{\mathrm{basic}}$. Then 
\[
\kappa_G|_{B(G)_{\mathrm{basic}}} \colon B(G)_{\mathrm{basic}} \to X^* (Z(\wh{G}_{\bfC})^{W_F}) 
\]
is a bijection (\cite[5.6. Proposition]{KotIso}).

If $b,b' \in G(\breve{F})$ is $\sigma$-conjugate, 
we have an isomorphism $\Ad (g)\colon G_b \cong G_{b'}$ 
taking $g \in G(\breve{F})$ such that $b'=g b \sigma (g)^{-1}$, and the bijection $\Pi (G_b) \cong \Pi (G_{b'})$ determined by this isomorphism is independent of the choice of $g$. 
Using this bijection, we put $\Pi_{[b]} =\varprojlim_{b' \in [b]} \Pi (G_{b'})$.

In the rest of this section, we assume that $G$ is quasi-split. 

\begin{dfn}
Let $\Lambda$ be a ring.
Let $B$ be a Borel subgroup of $G$ and $R_{\mathrm{u}}(B)$ be the unipotent radical of $B$. 
A smooth character $\psi \colon R_{\mathrm{u}}(B)(F)\to \Lambda^{\times}$ is called generic if it satisfies 
\[
\{ g \in B(F) \mid \Ad (g) \psi = \psi \}=Z(G)(F) R_{\mathrm{u}}(B)(F) . 
\]
The pair of Borel subgroup $B$ and generic character $\psi$ is called a Whittaker datum in $\Lambda$-coefficients for $G$. 
\end{dfn}

\begin{dfn}
Let $\fw$ be the $G(F)$-conjugacy class of Whittaker data in $\bfC$-coefficients for $G$. 
Let $\pi$ be a smooth irreducible representation of $G(F)$ over $\bfC$.
We say that $\pi$ is $\fw$-generic if there exist $(B,\psi)\in \fw$ and a non-zero morphism $\pi|_{R_{\mathrm{u}}(B)(F)} \to \psi$ of  $R_{\mathrm{u}}(B)(F)$-representations. 
\end{dfn}

The next conjecture is a refinement of the local Langlands correspondence following \cite{BMOBGpar}, \cite[Conjecture 2.4.1]{KalSiso}.

\begin{conj}\label{conj:LLb}
We fix $c \in \bfC$ such that $c^2=q$ and a $G(F)$-conjugacy class $\fw$ of Whittaker data in $\bfC$-coefficients for $G$.  
Then, there exist a finite-to-one natural map 
\[
 \mathrm{LL}_{[b]} \colon \Pi_{[b]} \to \Phi (G) 
\]
for each $[b] \in B(G)$ and a bijection $\iota_{\fw}$ 
making the diagram 
\[
  \xymatrix{
 	\coprod_{[b] \in B(G)} \Pi_{[b],\phi} 
 	\ar[r]^-{\iota_{\fw}} \ar[d] & 
 	\mathrm{Irr} (S_{\phi}) \ar[d] \\ 
 	B(G) 
 	\ar[r]^-{\kappa_G} & 
 	X^* (Z(\wh{G}_{\bfC})^{W_F}) 
 }
\]
commutative for each L-parameter $\phi$ of $G$, where we put  
$\Pi_{[b],\phi}=\mathrm{LL}_{[b]}^{-1} ([\phi])$, 
$\mathrm{Irr} (S_{\phi})$ denotes the set of isomorphism classes of algebraic irreducible representations of $S_{\phi}$, the left vertical morphism is the natural projection, the right vertical morphism is a map determined by the restriction of the central character.
Under the natural bijection $\Pi_{[1]} \cong \Pi (G)$, the map $\mathrm{LL}_{[1]}$ and $\mathrm{LL}_{G}$ in Conjecture \ref{conj:LLb} are identified. 
Furthermore, if $\phi$ is discrete\footnote{In fact, this should hold for a class of L-parameters broader than discrete.}, then there exists a unique element of $\Pi_{[1],\phi}$ which is $\fw$-generic, and that element corresponds to the trivial representation of $S_{\phi}$ under $\iota_{\fw}$. 
\end{conj}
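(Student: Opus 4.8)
Since this is a conjecture, what follows is the strategy by which the program of \cite{FaScGeomLLC} is designed to establish it, together with an indication of what that program actually delivers. The plan is to trade the representation theory of all the inner forms $G_b(F)$ at once for sheaf theory on the moduli stack $\Bun_G$ of $G$-bundles on the Fargues--Fontaine curve. By Fargues' classification, $|\Bun_G|=B(G)$; the Harder--Narasimhan stratification has one locally closed stratum $\Bun_G^{b}$ for each $[b]$, and each $\Bun_G^{b}$ is of the form $[\ast/\wt G_b]$ where $\wt G_b$ is a group diamond, an extension of $\ul{G_b(F)}$ by a positive Banach--Colmez type group; in particular $D_{\lis}(\Bun_G^{b},\Lambda)\simeq D(G_b(F),\Lambda)$. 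Thus a single category $D_{\lis}(\Bun_G,\Lambda)$ glues all the $\Pi_{[b]}$, and $\pi_0(\Bun_G)\cong B(G)_{\mathrm{basic}}$, which by the bijection $\kappa_G|_{B(G)_{\mathrm{basic}}}$ recorded above is the target $X^*(Z(\wh G_{\bfC})^{W_F})$ of $\kappa_G$. I work with $\Lambda=\ol{\bQ}_\ell$ and transport to $\bfC$-coefficients by a chosen isomorphism $\ol{\bQ}_\ell\cong\bfC$; the fixed $c$ with $c^2=q$ pins down the half-Tate-twist normalization used below.

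\textbf{Step 2: construction of $\mathrm{LL}_{[b]}$.} The version of geometric Satake for the $B_{\mathrm{dR}}^{+}$-affine Grassmannian supplies Hecke operators $T_V$, $V\in\Rep({}^LG)$, on $D_{\lis}(\Bun_G,\Lambda)$, compatibly with partial Frobenii; from these, unit and counit maps, and elements of $W_F$ one builds excursion operators, whose algebra --- exactly as in Lafforgue's treatment of the function-field case \cite{LafChtred} --- is the ring of functions on the coarse moduli of the stack $[Z^1(W_F,\wh G)/\wh G]$ of L-parameters. For $\pi\in\Pi(G_b(F))$, viewed as a simple object of $D_{\lis}(\Bun_G,\Lambda)$ supported on $\Bun_G^{b}$, Schur's lemma forces this algebra to act through a character, i.e.\ a point of that coarse moduli, i.e.\ a $\wh G$-conjugacy class of semisimple parameters $\varphi_\pi$; this defines $\mathrm{LL}_{[b]}(\pi)$ on semisimplifications, it is finite-to-one because each $\Pi_{[b],\phi}$ is a finite L-packet, and on the open stratum $[\ast/\ul{G(F)}]$ it reproduces $\mathrm{LL}_G$, so $\mathrm{LL}_{[1]}$ and $\mathrm{LL}_G$ agree. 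Upgrading $\varphi_\pi$ to an honest element of $\Phi(G)$ --- reconstructing the monodromy, i.e.\ the $\SL_2$-part --- amounts to undoing the $(g,w)\mapsto(\mathrm{diag}(c^{-d_F(w)},c^{d_F(w)}),w)$ twist and requires independence-of-$\ell$ and purity input.

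\textbf{Step 3: internal structure and Whittaker normalization.} With $G$ quasi-split, fix $(B,\psi)\in\fw$ and form the Whittaker sheaf $\sW_\psi=i_{1!}\bigl(\cInd_{R_{\mathrm{u}}(B)(F)}^{G(F)}\psi\bigr)$ on $\Bun_G$ (up to a normalizing shift). The structural input is the geometric categorical conjecture: the spectral action of $\Perf\bigl([Z^1(W_F,\wh G)/\wh G]\bigr)$ on $D_{\lis}(\Bun_G,\Lambda)$ constructed in \cite{FaScGeomLLC} carries the structure sheaf to $\sW_\psi$, and $\sW_\psi$ generates, whence $D_{\lis}(\Bun_G,\Lambda)$ is identified with (ind-)coherent sheaves on $[Z^1(W_F,\wh G)/\wh G]$. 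Granting this, I localize at the point $[\phi]$: since $\phi$ is Frobenius-semisimple the fiber of $[Z^1(W_F,\wh G)/\wh G]$ over $[\phi]$ is $[\ast/S_\phi]$, so the subcategory of $D_{\lis}(\Bun_G,\Lambda)$ with parameter $\phi$ becomes $\Rep(S_\phi)$ and its simple objects --- which, taken over all $[b]\in B(G)$, are exactly the $\pi$ with $\mathrm{LL}_{[b]}(\pi)=[\phi]$ --- match $\Irr(S_\phi)$; this bijection is $\iota_\fw$. Commutativity of the square with $\kappa_G$ holds because the equivalence respects the grading by $\pi_0(\Bun_G)\cong X^*(Z(\wh G_{\bfC})^{W_F})$, which on the spectral side is the grading of $\Rep(S_\phi)$ by characters of $Z(\wh G_{\bfC})^{W_F}\subset S_\phi$. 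Finally, $\pi$ is $\fw$-generic iff $\Hom_{G(F)}\bigl(\pi,\cInd_{R_{\mathrm{u}}(B)(F)}^{G(F)}\psi\bigr)\neq 0$ iff the corresponding object maps nontrivially to $\sW_\psi$; when $\phi$ is discrete, $S_\phi/Z(\wh G_{\bfC})^{W_F}$ is finite, the fiber of the structure sheaf at $[\phi]$ is the trivial one-dimensional $S_\phi$-representation, and since $\sW_\psi$ corresponds to the structure sheaf this yields a unique $\fw$-generic member of $\Pi_{[1],\phi}$, mapping to the trivial representation under $\iota_\fw$.

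\textbf{Main obstacle.} The hard part is Step 3: the geometric categorical local Langlands conjecture --- $\sW_\psi$ a compact generator, the spectral action identifying $D_{\lis}(\Bun_G,\Lambda)$ with coherent sheaves on the stack of L-parameters --- is open in general. What \cite{FaScGeomLLC} delivers unconditionally is the geometric framework, the spectral action, and the construction of the semisimple parameters (Steps 1--2 apart from the monodromy upgrade); the full categorical statement is known only in favourable cases. Consequently the bijectivity of $\iota_\fw$, the fact that $\mathrm{LL}_G$ lands in $\Phi(G)$ rather than merely among semisimple parameters, and the genericity clause presently follow from conjectures rather than theorems, except where the correspondence is pinned down by other methods (for instance $\GL_n$ and tori).
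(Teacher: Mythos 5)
This is a conjecture, not a theorem: the paper states Conjecture~\ref{conj:LLb} as a refinement of the local Langlands correspondence following Kaletha and Bertoloni Meli--Oi, and offers no proof. You correctly recognize this and, instead of a proof, give an accurate account of the Fargues--Scholze strategy by which the geometric framework would (conditionally) deliver it: the identification $|\Bun_G|\cong B(G)$ and $D_{\lis}(\Bun_G^{[b]},\Lambda)\simeq D(G_b(F),\Lambda)$ gluing all the $\Pi_{[b]}$ into one category; excursion operators producing semisimple parameters and hence $\mathrm{LL}_{[b]}$ on semisimplifications; and the categorical conjecture with the Whittaker sheaf $\cW_\psi$ producing $\iota_\fw$ and the genericity clause via $\Rep(S_\phi)$. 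This matches what the paper develops in its later sections on the excursion algebra, the spectral action, and Conjecture~\ref{conj:geomcj}, and you correctly flag the two places where this falls short of a proof — the monodromy/$\SL_2$ upgrade from semisimple to full parameters, and the open status of the categorical equivalence itself. Since the paper supplies no proof to compare against, there is nothing further to reconcile.
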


\section{Satake isomorphism}\label{sec:Satiso}

In this section, we explain the relationship between the Satake isomorphism describing the unramified Hecke algebra and the local Langlands correspondence.

Let $\mathsf{G}$ be a locally profinite group. 
Let $\cC_{\mathrm{c}}^{\infty}(\mathsf{G},\bQ)$ be the $\bQ$-vector space of all compactly supported constant functions on $\mathsf{G}$ that take values in $\bQ$. For $f \in \cC_{\mathrm{c}}^{\infty}(\mathsf{G},\bQ)$ and $g\in \mathsf{G}$, we define $l(g)f, r(g)f \in \cC_{\mathrm{c}}^{\infty}(\mathsf{G},\bQ)$ by $(l(g)f)(x)=f(g^{-1}x)$ and $(r( g)f)(x)=f(xg)$.

\begin{prop}[{\cite[I.2.4]{VigReplmod}}]\label{prop:Haar}
There exists 
$\mu_{\mathsf{G}} \in \Hom_{\bQ}(\cC_{\mathrm{c}}^{\infty}(\mathsf{G},\bQ),\bQ)$ such that $\mu_{\mathsf{G}}(l(g)f)=\mu_{\mathsf{G}}(f)$ for any $f \in \cC_{\mathrm{c}}^{\infty}(\mathsf{G},\bQ)$ and $g\in \mathsf{G}$ and $\mu_{\mathsf{G}}(K)>0$ for any compact open subgroup $K$ of $\mathsf{G}$. 
Moreover, such $\mu_{\mathsf{G}}$ is unique up to positive rational multiples. 
\end{prop}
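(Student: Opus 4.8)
The plan is to construct $\mu_{\mathsf{G}}$ by hand on the span of indicator functions of cosets of compact open subgroups, then argue these span $\cC_{\mathrm{c}}^{\infty}(\mathsf{G},\bQ)$, then verify left-invariance and positivity, and finally uniqueness. First I would observe that every locally profinite group has a neighborhood basis of the identity consisting of compact open subgroups; consequently, any $f \in \cC_{\mathrm{c}}^{\infty}(\mathsf{G},\bQ)$ is, by local constancy and compact support, a finite $\bQ$-linear combination of indicator functions $\mathbf{1}_{gK}$ with $K$ a compact open subgroup and $g \in \mathsf{G}$. So it suffices to define $\mu_{\mathsf{G}}$ on such indicator functions in a consistent way and extend by linearity.

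Next I would fix one compact open subgroup $K_0$ and set $\mu_{\mathsf{G}}(\mathbf{1}_{K_0}) = 1$. For any compact open subgroup $K$, the index $[K : K \cap K_0]$ and $[K_0 : K \cap K_0]$ are finite (intersection of two compact open subgroups is again compact open, and is of finite index in each since a compact set is covered by finitely many cosets of an open set). Declare $\mu_{\mathsf{G}}(\mathbf{1}_K) = [K_0 : K \cap K_0] / [K : K \cap K_0] \in \bQ_{>0}$, and more generally $\mu_{\mathsf{G}}(\mathbf{1}_{gK}) = \mu_{\mathsf{G}}(\mathbf{1}_K)$ for all $g$. The consistency check is that if $\mathbf{1}_{gK} = \sum_i \mathbf{1}_{g_i K'}$ (a coset of $K$ being a disjoint union of cosets of a smaller compact open $K' \subset K$, of which there are exactly $[K : K']$), then $\mu_{\mathsf{G}}(\mathbf{1}_{gK}) = [K : K'] \cdot \mu_{\mathsf{G}}(\mathbf{1}_{K'})$; this reduces to the multiplicativity of indices, $[K_0 : K' \cap K_0][K' : \cdots] = \cdots$, which one checks by passing to a common compact open subgroup contained in $K' \cap K_0$ and using the tower law for indices. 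Once consistency on a cofinal system of $K$'s is established, $\mu_{\mathsf{G}}$ is well-defined on all of $\cC_{\mathrm{c}}^{\infty}(\mathsf{G},\bQ)$ by linearity, and I would record that $\mu_{\mathsf{G}}(\mathbf{1}_U) > 0$ for every nonempty compact open $U$ since $U$ is a finite disjoint union of cosets of some small $K$.

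For left-invariance, $l(g)\mathbf{1}_{hK} = \mathbf{1}_{ghK}$, and by construction $\mu_{\mathsf{G}}(\mathbf{1}_{ghK}) = \mu_{\mathsf{G}}(\mathbf{1}_K) = \mu_{\mathsf{G}}(\mathbf{1}_{hK})$, so $\mu_{\mathsf{G}} \circ l(g) = \mu_{\mathsf{G}}$ on indicator functions, hence everywhere by linearity. For uniqueness, suppose $\mu'$ is another such functional; then $\mu'/\mu'(\mathbf{1}_{K_0})$ is a left-invariant functional sending $\mathbf{1}_{K_0}$ to $1$, and left-invariance forces $\mu'(\mathbf{1}_{gK}) = \mu'(\mathbf{1}_K)$, while the same disjoint-union relations used above force $\mu'(\mathbf{1}_K)$ to equal the ratio of indices computed against $K_0$; thus $\mu' = \mu'(\mathbf{1}_{K_0}) \cdot \mu_{\mathsf{G}}$, with $\mu'(\mathbf{1}_{K_0}) \in \bQ_{>0}$ by the positivity hypothesis. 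The main obstacle is the consistency verification in the construction step: one must check that the assignment on indicator functions of cosets does not depend on which compact open subgroup one uses to express a given locally constant compactly supported function, and this is where the multiplicativity of indices in towers of compact open subgroups has to be used carefully (a statement of the form $[K_1 : K_3] = [K_1 : K_2][K_2 : K_3]$ for $K_3 \subset K_2 \subset K_1$, which holds because cosets partition compatibly). Everything else is bookkeeping.
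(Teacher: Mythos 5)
The paper does not give its own proof of this Proposition; it cites \cite[I.2.4]{VigReplmod}. Your approach is the standard elementary construction of the Haar measure on a locally profinite group, and the overall architecture (define on indicator functions of cosets, check consistency via multiplicativity of indices, extend by linearity, prove invariance and uniqueness) is sound and is essentially what one finds in Vign\'eras.

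However, there is a concrete error in the definition of $\mu_{\mathsf{G}}$: the formula
\[
\mu_{\mathsf{G}}(\mathbf{1}_K) = \frac{[K_0 : K \cap K_0]}{[K : K \cap K_0]}
\]
is inverted; it should be $\mu_{\mathsf{G}}(\mathbf{1}_K) = [K : K \cap K_0] / [K_0 : K \cap K_0]$. To see the problem, take $\mathsf{G} = \bQ_p$, $K_0 = \bZ_p$, $K' = p\bZ_p$. Your formula gives $\mu_{\mathsf{G}}(\mathbf{1}_{K'}) = [K_0:K']/[K':K'] = p$, whereas finite additivity forces $\mu_{\mathsf{G}}(\mathbf{1}_{K_0}) = p\,\mu_{\mathsf{G}}(\mathbf{1}_{K'})$, i.e.\ $\mu_{\mathsf{G}}(\mathbf{1}_{K'}) = 1/p$. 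With the inverted formula the consistency check you invoke actually fails: for $K' \subset K \cap K_0$ one finds $[K:K']\,\mu_{\mathsf{G}}(\mathbf{1}_{K'}) \ne \mu_{\mathsf{G}}(\mathbf{1}_K)$. With the corrected (reciprocal) formula the tower law for indices does make the check go through, and the rest of your argument --- the reduction to indicator functions via local constancy and compact support, the disjoint-union consistency, left-invariance, positivity, and the uniqueness-up-to-scalar argument --- is correct as written. So this is a sign error in the key formula rather than a flaw in strategy; once corrected the proof is complete.
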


We call $\mu_{\mathsf{G}}$ in Proposition \ref{prop:Haar} the left Haar measure of $\mathsf{G}$.
For $g \in \mathsf{G}$, there exists uniquely $\delta_{\mathsf{G}}(g)\in \bQ_{>0}$ such that $\mu_{\mathsf{G}}(r(g)f)=\delta_{\mathsf{G}}(g) \mu_{\mathsf{G}}(f)$ for any $f \in \cC_{\mathrm{c}}^{\infty}(\mathsf{G},\bQ)$, since $f \mapsto \mu_{\mathsf{G}}(r(g)f)$ is also a left Haar measure. 
This define $\delta_{\mathsf{G}} \colon \mathsf{G} \to \bQ_{>0}$, which is called the modulus character of $\mathsf{G}$. 

For a compact open subgroup $K$ of $\mathsf{G}$, we have
let $\cH (\mathsf{G},K)$ denote the ring of 
bi-$K$-invariant compactly supported $\bfC$-valued functions on $\mathsf{G}$ with product given by 
\[
(f_1 * f_2)(x)=\int_{\mathsf{G}} f_1(g) f_2(g^{-1}x) \mu_{\mathsf{G}}(g) \quad \quad (x \in \mathsf{G}) . 
\]
This is called the Hecke algebra of $\mathsf{G}$ with respect $K$. 
By \cite[I.3.4]{VigReplmod}, there is an isomorphism 
\begin{equation}\label{eq:EndHiso}
	\End (\cInd_K^{\mathsf{G}} 1)^{\mathrm{op}} \cong \cH (\mathsf{G},K)
\end{equation}
of rings over $\bfC$. 

We say that $G$ is unramified if $G$ extends to a reductive group scheme over $\cO_F$.
In this section, we assume that $G$ is unramified. 

\begin{dfn}
Let $K$ be a subgroup of $G(F)$. 
We say that $K$ is a hyperspecial subgroup of $G(F)$ if $G$ extends to a reductive group scheme $\cG$ over $\cO_F$ and $K$ coincides with $\cG (\cO_F)$. 
\end{dfn}

Let $K$ be a hyperspecial subgroup of $G(F)$, and we take $\cG$ as the above definition. 
By \cite[Corollary 5.2.14]{ConRgrsch} and \cite[XXVI, Corollaire 2.3]{SGA3-3}, we can take a Borel pair $(\cB,\cT)$ of $\cG$. 
We put $B=\cB_F$ and $T=\cT_F$, and let $U$ be the unipotent radical of $B$. 

We fix $c \in \bfC$ such that $c^2=q$ and set $(q^n)^{1/2}=c^n$ for $q^n \in q^{\bZ}$.
Take the left Haar measure $\mu_{U(F)}$ of $U(F)$ such that $\int_{U(F) \cap K} 1 \mu_{U(F)}=1$, and define $\mathsf{S} \colon \cH (G(F),K) \to \cH (T(F),\cT (\cO_F))$ by  
\[
 \mathsf{S} (f)(t)=\delta_{B(F)}(t)^{\frac{1}{2}} \int_{U(F)} f(tu) \mu_{U(F)} . 
\]
Let $A \subset T$ be the maximal split torus. Then there is an isomorphism $\bfC [X_*(A)] \cong \cH (T(F),\cT (\cO_F))$ of $\bfC$-algebras determined by the isomorphism 
\[
X_*(A) \stackrel{\sim}{\longrightarrow} T(F)/\cT (\cO_F) ;\ \mu \mapsto [\mu (\varpi)]
\]
of \cite[9.5]{BorAutL}. 
We put ${}_F W=N_G(A)/T$. Then ${}_F W$ acts naturally on $X_*(A)$.

\begin{thm}[{\cite[Remark 2 of Theorem 3]{SatThesph}, \cite[B.4]{VigCorLGLmodl}}]\label{thm:Satiso}
The map $\mathsf{S}$ induces an isomorphism $\cH (G(F),K) \stackrel{\sim}{\longrightarrow} \bfC [X_*(A)]^{{}_F W}$ of $\bfC$-algebras, 
called the Satake isomorphism. 
\end{thm}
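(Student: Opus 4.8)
The plan is to establish the Satake isomorphism in two stages: first show that $\mathsf{S}$ is an injective algebra homomorphism landing in the Weil-group... no wait, in the ${}_F W$-invariants, and then show surjectivity by a support/triangularity argument. I will work throughout with the hyperspecial $K$, the Borel pair $(\cB,\cT)$ over $\cO_F$, and the Cartan-type decompositions that $\cG/\cO_F$ provides.

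Let me think about the key steps.

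**Step 1 (the target is correctly $\bfC[X_*(A)]^{{}_F W}$, and $\mathsf{S}$ is a homomorphism).** First I would verify that $\mathsf{S}$ is an algebra homomorphism. This is the standard Gelfand-trick computation: one uses the Iwasawa decomposition $G(F) = B(F)K$ (valid because $K$ is hyperspecial, coming from $\cG(\cO_F)$ and the $\cO_F$-Borel), writes the convolution on $\cH(G(F),K)$ in terms of integration over $B(F)$, and checks that the factor $\delta_{B(F)}^{1/2}$ makes the modulus characters cancel so that $\mathsf{S}(f_1 * f_2) = \mathsf{S}(f_1) * \mathsf{S}(f_2)$ in $\cH(T(F),\cT(\cO_F))$. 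The convolution on $\cH(T(F),\cT(\cO_F)) \cong \bfC[X_*(A)]$ is just the group algebra multiplication under the isomorphism $\mu \mapsto [\mu(\varpi)]$, so $\mathsf{S}$ is a homomorphism to $\bfC[X_*(A)]$.

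**Step 2 (Weyl-invariance of the image).** Next I would show $\mathsf{S}(f)$ is ${}_F W$-invariant. The cleanest route is to compare $\mathsf{S}$ with the constant-term maps for different Borels containing $A$: the bi-$K$-invariance of $f$ together with the fact that all the Borels $B_w = {}^n B$ (for $n$ a lift of $w \in {}_F W$) are conjugate under $K$ via elements of $N_{\cG}(\cT)(\cO_F)$ means the constant terms along $B$ and along $B_w$ agree. Then an elementary change of variables comparing the integral over $U(F)$ with the integral over $U_w(F) = (U_w \cap U^-)(F) \cdot (U_w \cap U)(F)$, using that the contribution of the "unipotent radical crossed with $K$" pieces integrate to $1$, yields $\mathsf{S}(f)(wt) = \mathsf{S}(f)(t)$. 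This gives a well-defined algebra map $\mathsf{S}\colon \cH(G(F),K) \to \bfC[X_*(A)]^{{}_F W}$.

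**Step 3 (injectivity and surjectivity via Cartan decomposition and triangularity).** For this I would invoke the Cartan decomposition $G(F) = \coprod_{\mu \in X_*(A)^+} K \mu(\varpi) K$ (dominant cocharacters with respect to $(\cB,\cT)$), which again is available because $\cG$ is reductive over $\cO_F$ — see for instance the references to \cite{ConRgrsch}. So $\cH(G(F),K)$ has a $\bfC$-basis $\{\mathbf{1}_{K\mu(\varpi)K}\}_{\mu \in X_*(A)^+}$. Meanwhile $\bfC[X_*(A)]^{{}_F W}$ has a basis $\{m_\mu\}_{\mu \in X_*(A)^+}$ of ${}_F W$-orbit sums (monomial symmetric functions). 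The heart of the matter is the computation
\[
\mathsf{S}(\mathbf{1}_{K\mu(\varpi)K}) = c^{\langle 2\rho, \mu\rangle}\, m_\mu + \sum_{\lambda < \mu} a_{\mu\lambda}\, m_\lambda,
\]
i.e.\ $\mathsf{S}$ is "upper triangular with invertible diagonal" with respect to the dominance order on $X_*(A)^+$. The diagonal coefficient $c^{\langle 2\rho,\mu\rangle}$ (which is nonzero, this is why we fixed $c$ with $c^2 = q$) comes from computing $\int_{U(F)} \mathbf{1}_{K\mu(\varpi)K}(\mu(\varpi)u)\,\mu_{U(F)}$ together with the $\delta_{B(F)}^{1/2}$ factor; the lower terms come from the fact that if $u \in U(F)$ and $\mu'(\varpi)u \in K\mu(\varpi)K$ then $\mu' \le \mu$, a standard consequence of the Iwasawa/Cartan interplay. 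Granting this triangularity, $\mathsf{S}$ sends a basis to a basis (after an invertible triangular change), hence is an isomorphism.

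**The main obstacle** I expect is Step 3, and within it the lower-triangularity estimate: controlling which $\mu'(\varpi)u$ with $u \in U(F)$ can lie in $K\mu(\varpi)K$. This requires a genuine structural input — either the theory of the affine Bruhat–Tits building and the fact that $\mu'(\varpi)$ and $\mu(\varpi)$ are in the same $K$-double coset only when their images in $X_*(A)^+$ coincide, or a direct matrix argument reducing (via a faithful representation of $\cG/\cO_F$) to $\GL_n$ where it is the theory of elementary divisors / Smith normal form. The positivity bookkeeping in Step 1 (that $\delta_{B(F)}^{1/2}$ is the exact correction, no stray powers of $q$) and the normalization of Haar measures also need care but are routine once set up. I would cite \cite{SatThesph} and the appendix of \cite{VigCorLGLmodl} for the complete argument, noting that the latter handles the coefficient-field subtleties (here $\bfC$ of characteristic $0$, so we are in the easy case) and the precise form of the triangularity.
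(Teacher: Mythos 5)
The paper does not give a proof of Theorem~\ref{thm:Satiso}; it is stated as a cited result, with pointers to Satake's original argument and to the appendix of Vign\'eras. Your sketch is the standard proof that those references give: Gelfand's trick for the homomorphism property via the Iwasawa decomposition, Weyl invariance via $K$-conjugacy of the Borel pairs (equivalently, reduction to rank one), and then bijectivity from the Cartan decomposition together with the unitriangularity of $\mathsf{S}(\mathbf{1}_{K\mu(\varpi)K})$ with respect to the dominance order, with leading coefficient $c^{\langle 2\rho,\mu\rangle}$. You correctly identify the lower-triangularity estimate as the non-routine input; it does follow from Bruhat--Tits theory (or, after a faithful $\cO_F$-representation, from elementary divisors for $\GL_n$). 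One small caution: the leading coefficient of $m_\mu$ in $\mathsf{S}(\mathbf{1}_{K\mu(\varpi)K})$ depends on normalization conventions for $\delta_{B(F)}$ and on the measure of $\mu(\varpi)^{-1}U(\cO_F)\mu(\varpi)$, so the exponent could come out as $-\langle 2\rho,\mu\rangle$ instead; this is harmless for invertibility but should be pinned down if you carry out Step~3 in detail. Otherwise the proposal is correct and matches the cited sources.
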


Next, we explain a relationship between the Satake isomorphism and the local Langlands correspondence. 

By \cite[6.7]{BorAutL}, the dual $\wh{T} \to \wh{A}$ of $A \hookrightarrow T$ and $\wh{T} \to \wh{G} \rtimes \sigma;\ t \mapsto (t,\sigma)$ induces an isomorphism 
\begin{equation}\label{eq:AGsig}
 \wh{A}_{\bfC} \sslash {}_F W \cong (\wh{G}_{\bfC} \rtimes \sigma ) \sslash \wh{G}_{\bfC} 
\end{equation}
(\cf \cite[Proposition 4.19]{DHKMModLp}), 
where $\sslash$ denotes the GIT quotient. 
By Theorem \ref{thm:Satiso} and the isomorphism \eqref{eq:AGsig}, we have isomorphisms 
\begin{equation}\label{eq:HGammaiso}
\begin{split}
\cH (G(F),K) &\cong \bfC [X_*(A)]^{{}_F W} \cong \bfC [X^*(\wh{A}_{\bfC})]^{{}_F W} \\
&\cong \Gamma (\wh{A}_{\bfC} \sslash {}_F W,\cO) \cong \Gamma ((\wh{G}_{\bfC} \rtimes \sigma ) \sslash \wh{G}_{\bfC},\cO). 
\end{split} 
\end{equation}
We assume that an L-parameter $\phi$ of $G$ is unramified, namely that  
\[
\cI_{F,\bfC} \times \SL_{2,\bfC} \hookrightarrow \cW_{F,\bfC} \times \SL_{2,\bfC} \stackrel{\phi}{\lra} {}^L G_{\bfC} \to \wh{G}_{\bfC} \rtimes (\cW_F/\cI_F)_{\bfC} 
\]
is trivial. 
Then $(g,\sigma) \in \wh{G}(\bfC) \rtimes \sigma$ is given as the image of $q$-th power Frobenius element under 
$W_F/I_F \to \wh{G}(\bfC) \rtimes \Gal (F^{\rmur}/F)$ induced by $\phi$. 
By this and the isomorphism \eqref{eq:HGammaiso}, we have 
\[
 \theta_{\phi,K} \colon \cH (G(F),K) \cong \Gamma ((\wh{G}_{\bfC} \rtimes \sigma ) \sslash \wh{G}_{\bfC},\cO) \to \bfC . 
\]
Let $\pi_{\phi,K}$ be the smooth irreducible representation of $G(F)$ corresponding under \cite[I.8.9]{VigReplmod} to 
the simple $\cH (G(F),K)$-module given by $\theta_{\phi,K}$. 
We note that the isomorphism class of 
$\pi_{\phi,K}$ depends only on the $G(F)$-conjugacy class of $K$. 

By \cite[Proposition 4.2.6]{ZhuCohLp}\footnote{If $G$ is unramified, being absolutery special parahoric in \cite[Remark 4.2.2]{ZhuCohLp} is equivalent to being hyperspecial. This follows from \cite[1.10.2]{TitRedloc}.}, 
we can attach a $G(F)$-conjugacy class $\mathrm{HS}_{\fw}$ of hyperspecial subgroups of $G(F)$ 
to a $G(F)$-conjugacy class $\fw$ of Whittaker data in $\bfC$-coefficients for $G$. 

For an unramified L-parameter $\phi$ of $G$, 
Conjecture \ref{conj:LL} and Conjecture \ref{conj:LLb} should satisfy the following: 
\begin{itemize}
\item\label{en:unLL} 
$\mathrm{LL}_G^{-1}(\phi)$ is the isomorphism classes of smooth irreducible representations of $G(F)$ such that each of them is isomorphic to $\pi_{\phi,K}$ for some hyperspecial subgroup $K$\footnote{By this, there is a surjection from the set of $G(F)$-conjugacy classes of hyperspecial subgroups of $G(F)$ to  $\mathrm{LL}^{-1}(\phi)$. This map is studied in \cite{MisStrunL}, and is not an injection in general}. 
\item\label{en:unLLb}
For $K \in \mathrm{HS}_{\fw}$, the element $[\pi_{\phi,K}] \in \Pi_{[1],\phi}$ corresponds to the trivial representation of $S_{\phi}$ under $\iota_{\fw}$. 
\end{itemize}

\section{Modulai space of L-parameters}

We explain the moduli space of L-parameters constructed in 
\cite{DHKMModLp}, \cite{FaScGeomLLC}, \cite{ZhuCohLp}. 
Here we follow the formulation in \cite{FaScGeomLLC}. 

\begin{dfn}
Let $\mathrm{Prof}$ be the site of profinite sets with coverings given by finite families of morphisms such that the union of the images of the morphisms in each finite family is the whole set. 
Then the sheaf of sets on $\mathrm{Prof}$ is called a condensed set\footnote{We ignore set-theoretic issues here, but see \cite[Appendix to Lecture II]{SchLecCond} for a precise treatment.}. 
A condensed group, a condensed ring, etc. are defined in the same way. 
\end{dfn}

If $T$ is a topological space, we define the condensed set $T_{\mathrm{c}}$ by associating $C^0 (S,T)$ to a profinite set $S$, where $C^0 (-,-)$ denotes the set of continuous maps. 
If $T$ is a topological group or a topological ring, $T_{\mathrm{c}}$ is defined as a condensed group or a condensed ring similarly. 

Let $\ell$ be a prime number different from $p$. 
In the following, we consider the dual group of $G$ over $\bZ_{\ell}$ and denote it by the same symbol $\wh{G}$.
For a commutative ring $\Lambda$ over $\bZ_{\ell}$, we write $\Lambda_{\mathrm{disc}}$ for $\Lambda$ with discrete topology, and put 
$\Lambda_{\mathrm{c},\ell}=\Lambda_{\mathrm{disc},\mathrm{c}} \otimes_{\bZ_{\ell,\mathrm{disc},\mathrm{c}}} \bZ_{\ell,\mathrm{c}}$.

\begin{dfn}
For a commutative ring $\Lambda$ over $\bZ_{\ell}$, a section $W_{F,\mathrm{c}} \to \wh{G}(\Lambda_{\mathrm{c},\ell}) \rtimes W_{F,\mathrm{c}}$ of the natural projection $\wh{G}(\Lambda_{\mathrm{c},\ell}) \rtimes W_{F,\mathrm{c}} \to W_{F,\mathrm{c}}$ of condensed groups is called an $\ell$-adic L-parameter in $\Lambda$-coefficients. 
\end{dfn}

More concretely, an $\ell$-adic L-parameter in $\Lambda$-coefficients is a section $W_F \to \wh{G}(\Lambda ) \rtimes W_F$ of 
the natural projection $\wh{G}(\Lambda ) \rtimes W_F \to W_F$ such that there are some embedding $\wh{G} \hookrightarrow \GL_n$ and a finitely generated sub-$\bZ_{\ell}$-module $M$ of $\Lambda$ satisfying that 
\[
I_F \hookrightarrow W_F \to \wh{G}(\Lambda ) \rtimes W_F \to \wh{G}(\Lambda ) \hookrightarrow \GL_n (\Lambda ) \hookrightarrow M_n (\Lambda) 
\]
factors through $M_n (M)$ and the factored map is continuous if $M$ is equipped with the $\ell$-adic topology. 

\begin{thm}
The functor sending a commutative ring $\Lambda$ over $\bZ_{\ell}$ to the set of $\ell$-adic L-parameters in $\Lambda$-coefficients is represented by a flat locally complete intersection scheme $Z^1 (W_F,\wh{G})$ over $\bZ_{\ell}$. 
\end{thm}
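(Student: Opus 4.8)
The plan is to exhibit $Z^1(W_F,\wh G)$ as a colimit of affine schemes of finite type over $\bZ_\ell$ and then to verify flatness and the local complete intersection property on each such piece by deformation theory.

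\emph{Reduction to finite-type pieces.} Let $P_F\subseteq I_F$ be the wild inertia, a pro-$p$ group, so that $I_F/P_F\cong\prod_{\ell'\neq p}\bZ_{\ell'}(1)$ with $\sigma$ acting by $q$ on each factor. Since $\ell\neq p$ and the $\ell$-adic topology on $\wh G(\Lambda_{\mathrm c,\ell})$ has a basis of identity neighbourhoods consisting of pro-$\ell$ subgroups, the concrete description of $\ell$-adic L-parameters recalled in the excerpt forces the restriction of any such parameter to $P_F$, and to the prime-to-$\ell$ part of tame inertia, to have finite image. Hence the functor is the filtered colimit, over a cofinal system of open normal subgroups $P\trianglelefteq W_F$ contained in $I_F$ chosen so that the \emph{pro-$\ell$} part $\bZ_\ell(1)$ of tame inertia is \emph{not} truncated in $\Gamma:=W_F/P$, of the subfunctors $Z^1(\Gamma,\wh G)\subseteq Z^1(W_F,\wh G)$ of cocycles trivial on $P$. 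Such a $\Gamma$ is topologically generated by a Frobenius lift $\sigma$, a generator $\tau$ of the un-truncated $\bZ_\ell(1)$, and finitely many generators of a finite group $N_0$ \emph{of order prime to $\ell$} (the image of $P_F$ together with the prime-to-$p$-and-$\ell$ tame quotient), subject to the relations of this presentation; so $Z^1(\Gamma,\wh G)$ is a closed subscheme of a product of finitely many copies of $\wh G$ and one copy of the ``topologically unipotent'' locus $\mathcal U\subseteq\wh G$ receiving $c(\tau)$, hence affine of finite type over $\bZ_\ell$. Finally each transition morphism between two such pieces only enlarges a pro-$p$ layer $P'/P$; since $|P'/P|$ is a power of $p$, hence invertible in $\bZ_\ell$, the scheme $\Hom(P'/P,\wh G)$ is smooth over $\bZ_\ell$ with the trivial homomorphism an isolated point (its tangent space $\Hom_{\mathrm{gp}}(P'/P,\wh{\mathfrak{g}})$ into the torsion-free $\wh{\mathfrak{g}}$ being zero), so the locus of cocycles trivial on $P'/P$ is open as well as closed. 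Thus the transition morphisms are open and closed immersions, the colimit is represented by an honest scheme locally of finite type over $\bZ_\ell$, and both assertions may be checked on a single $Z^1(\Gamma,\wh G)$.

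\emph{Deformation theory and dimension count.} Fix $\Gamma$ as above and a point $c\in Z^1(\Gamma,\wh G)(k)$ with $k$ a field. Standard deformation theory of cocycle schemes identifies the tangent space at $c$ with $Z^1(\Gamma,\wh{\mathfrak{g}}_c)$ and an obstruction space with $H^2(\Gamma,\wh{\mathfrak{g}}_c)$, where $\wh{\mathfrak{g}}_c$ is the adjoint representation twisted by $c$; hence the completed local ring at $c$ is a quotient of a regular local ring of dimension $\dim_k Z^1(\Gamma,\wh{\mathfrak{g}}_c)$ by an ideal generated by at most $\dim_k H^2(\Gamma,\wh{\mathfrak{g}}_c)$ elements. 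The key input is the ``Euler characteristic zero'' identity
\[
\dim_k H^0(\Gamma,M)-\dim_k H^1(\Gamma,M)+\dim_k H^2(\Gamma,M)=0
\]
for every finite-dimensional $k[\Gamma]$-module $M$ with $\mathrm{char}\,k\neq p$: since $N:=\ker(\Gamma\to\langle\sigma\rangle)$ is an extension of $\bZ_\ell(1)$ by $N_0$ one has $H^{\geq2}(N,M)=0$, and the Wang sequence for $1\to N\to\Gamma\to\langle\sigma\rangle\to1$ then gives the identity from the equality $\dim\ker=\dim\mathrm{coker}$ for an endomorphism of a finite-dimensional vector space. Combining, $\dim_c Z^1(\Gamma,\wh G)\geq\dim_k Z^1(\Gamma,\wh{\mathfrak{g}}_c)-\dim_k H^2(\Gamma,\wh{\mathfrak{g}}_c)=\dim\wh G$ at every $c$; if the reverse inequality also holds then all these inequalities are equalities, every completed local ring is cut out by exactly $\dim_k H^2$ elements forming a regular sequence, and $Z^1(\Gamma,\wh G)$ is a complete intersection with fibre over each point of $\Spec\bZ_\ell$ pure of dimension $\dim\wh G$. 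As the defining equations then remain a regular sequence after $\otimes_{\bZ_\ell}\bQ_\ell$ and after $\otimes_{\bZ_\ell}\bF_\ell$, the local criterion of flatness gives $\bZ_\ell$-flatness, and being flat with lci fibres $Z^1(\Gamma,\wh G)$ is lci over $\bZ_\ell$ (equivalently, apply miracle flatness: it is Cohen--Macaulay with equidimensional fibres over the regular base $\Spec\bZ_\ell$).

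\emph{Main obstacle.} The real content is the reverse inequality $\dim_c Z^1(\Gamma,\wh G)\leq\dim\wh G$ at every point --- dangerously, at points of the special fibre over $\bF_\ell$ --- i.e. that the relations of the presentation impose the expected codimension, with no excess-dimensional or purely $\ell$-torsion components. I would stratify $Z^1(\Gamma,\wh G)$ by the images of wild and tame inertia and reduce to two model facts about the reductive group $\wh G$ over $\bZ_\ell$: that the topologically unipotent locus $\mathcal U$ carrying $c(\tau)$ is $\bZ_\ell$-flat of the expected dimension (it is the unipotent variety, a normal complete intersection by Kostant-type results), and that the subscheme cut out by the Frobenius relation $\sigma\tau\sigma^{-1}=\tau^q$ --- i.e. the fibres of the ``$q$-twisted conjugation'' morphism $\wh G\times\wh G\to\wh G$ --- is everywhere of the expected dimension, where one uses crucially that $q$ is invertible in $\bZ_\ell$, so that $q$-power maps and Lang-type maps on $\wh G$ stay well behaved in the special fibre. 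Granting these, the dimension estimate closes in every fibre and the theorem follows; by comparison the reduction to finite-type pieces and the Euler-characteristic bookkeeping are essentially formal.
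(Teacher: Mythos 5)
Your \emph{reduction to finite-type pieces} diverges from the paper's construction at the decisive point and the divergence is a genuine gap, not a variant. The paper (following Fargues--Scholze and Dat--Helm--Kurinczuk--Moss) \emph{discretizes} the tame quotient: it sets $W_F^0$ to be the preimage of the subgroup of $W_F/P_F$ \emph{abstractly} generated by a Frobenius lift $\wt\sigma$ and a single topological generator $t$, so that $W_F^0/P$ is a finitely generated \emph{discrete} group, and then $Z^1(W_F^0/P,\wh G)$ is simply the scheme of abstract group sections, a closed subscheme of a finite product of copies of $\wh G$ cut out by the word relations (including $\sigma t\sigma^{-1}=t^q$); the copy of $\bZ$ generated by $t$ is allowed to map \emph{anywhere} in $\wh G$, with no topological constraint. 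You instead keep the pro-$\ell$ direction $\bZ_\ell(1)\subset I_F/P$ intact in $\Gamma=W_F/P$ and compensate by demanding that $c(\tau)$ land in a ``topologically unipotent'' locus $\cU\subseteq\wh G$. But over $\bZ_\ell$ no such algebraic locus exists: already for $\wh G=\bG_m$ the topologically unipotent elements of $\bZ_\ell^\times$ are $1+\ell\bZ_\ell$, which is not Zariski-(locally)-closed, and in general the condition ``$g^{\ell^n}\to 1$'' is $\ell$-adic-analytic, not algebraic. So your finite-type piece is not a scheme, the colimit does not represent the functor, and the very first reduction fails. The fix is exactly the discretization the paper performs; the nontrivial content it carries is the lemma (suppressed in the survey, proved in the references) that for $\bZ_\ell$-algebras $\Lambda$, abstract sections over $W_F^0/P$ and condensed/continuous sections over $W_F/P$ give the \emph{same} set of $\Lambda$-points, which is precisely the statement your approach is trying to sidestep by restricting to a nonexistent subscheme.

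This confusion propagates into your dimension count. You assert that $\cU$ ``is the unipotent variety, a normal complete intersection by Kostant-type results.'' Topologically unipotent is not unipotent: over $\bZ_\ell$ the unipotent variety misses almost all topologically unipotent elements (again $1+\ell$ in $\bG_m$), and over the generic fibre $\bQ_\ell$ the topologically unipotent locus is open, not closed, and certainly not the nilpotent/unipotent cone. So neither the flatness nor the expected dimension of this piece can be read off Kostant's theorem. The Euler-characteristic and obstruction-theory bookkeeping you set up is in the right spirit and close to what Dat--Helm--Kurinczuk--Moss actually do, but it must be run on the discretized $Z^1(W_F^0/P,\wh G)$, where $t$ is a free generator modulo the Frobenius relation and the finite prime-to-$\ell$ part, not on a hypothetical continuous cocycle scheme; and the ``main obstacle'' you flag (that the relations cut out the expected codimension in every fibre, including over $\bF_\ell$) is indeed where the real work lies and is not addressed here.
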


We explain the construction of $Z^1 (W_F,\wh{G})$. 
First, the following holds. 

\begin{lem}\label{lem:Z1rep}
Suppose that $\Gamma$ is a discrete group and $\Gamma \to \Aut {\wh{G}}$ is given. 
The functor sending a commutative ring $\Lambda$ on $\bZ_{\ell}$ to the set of sections of $\wh{G}(\Lambda) \rtimes \Gamma \to \Gamma$ is represented by an affine scheme $Z^1 (\Gamma,\wh{G})$ over $\bZ_{\ell}$.
\end{lem}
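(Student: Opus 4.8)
The plan is to construct $Z^1(\Gamma, \wh G)$ explicitly when $\Gamma$ is a finitely generated discrete group and then reduce the general case to this one. First I would treat the case of a free group $F_r$ on $r$ generators: a section of $\wh G(\Lambda) \rtimes F_r \to F_r$ is uniquely determined by the images of the generators $\gamma_1, \dots, \gamma_r$, which can be arbitrary elements $g_i \in \wh G(\Lambda)$, so the functor is represented by $\wh G^r = \wh G \times_{\bZ_\ell} \cdots \times_{\bZ_\ell} \wh G$, which is affine over $\bZ_\ell$ since $\wh G$ is. For a general finitely presented $\Gamma = \langle \gamma_1,\dots,\gamma_r \mid r_1, \dots, r_s\rangle$, I would observe that a tuple $(g_1,\dots,g_r)$ defines a section factoring through $\Gamma$ if and only if each relation $r_j$, interpreted via the twisted multiplication in $\wh G \rtimes \Gamma$, is satisfied; each such relation $r_j(g_1,\dots,g_r) = 1$ is a closed condition cutting out a closed subscheme of $\wh G^r$. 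Hence $Z^1(\Gamma,\wh G)$ is the corresponding closed subscheme of $\wh G^r$, which is affine over $\bZ_\ell$.

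The one subtlety in the cocycle interpretation is that because $\Gamma$ acts on $\wh G$, the "section" condition is genuinely a twisted $1$-cocycle condition: writing $\phi(\gamma) = (c(\gamma), \gamma)$, the homomorphism property forces $c(\gamma\gamma') = c(\gamma)\cdot {}^\gamma c(\gamma')$, so when one expresses a word in the generators one must keep track of the $\Gamma$-action on the $\wh G$-components. This does not affect representability — it just means the defining equations for the relations are written using the action morphisms $\Gamma \times \wh G \to \wh G$, which are morphisms of schemes — but it is the place where a little care is needed to see the equations are algebraic. I would spell this out for one relation to make the pattern clear and then assert it in general.

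To handle an arbitrary discrete group $\Gamma$ (not necessarily finitely generated), I would write $\Gamma = \operatorname{colim}_i \Gamma_i$ as a filtered colimit of its finitely generated subgroups, or better, present $\Gamma$ as a filtered colimit of finitely presented groups mapping to it; then the functor of sections over $\Gamma$ is the limit of the functors of sections over the $\Gamma_i$, since a compatible family of sections over all finitely generated subgroups glues uniquely to a section over $\Gamma$. Correspondingly $Z^1(\Gamma,\wh G) = \varprojlim_i Z^1(\Gamma_i,\wh G)$, a filtered limit of affine $\bZ_\ell$-schemes along the evident transition maps, which is again an affine $\bZ_\ell$-scheme. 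Strictly one must check the transition maps are the natural restriction maps and that the limit represents the right functor, but this is formal.

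The main obstacle I expect is not any single deep step but rather bookkeeping: making precise the claim that, for a relation word $w(\gamma_1,\dots,\gamma_r)$ in $\Gamma$, the induced map $\wh G^r \to \wh G$ sending $(g_i)$ to the $\wh G$-component of $w$ evaluated in $\wh G \rtimes \Gamma$ is a morphism of $\bZ_\ell$-schemes, so that its fiber over the identity section is closed. Once one is comfortable that multiplication, inversion, and the $\Gamma$-action on $\wh G$ are all morphisms of affine group schemes over $\bZ_\ell$, this is immediate, but it is the only point where the argument is more than a one-line formal manipulation. I would therefore present the free case and the single-relation computation in a little detail and treat the reduction to finitely generated $\Gamma$ and the passage to the limit as routine.
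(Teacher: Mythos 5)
Your proposal is correct, and the underlying idea coincides with the paper's: represent a section by the images of a generating set, landing in a power of $\wh G$, and observe that the constraints coming from the relations are closed conditions. The paper's proof is a single sentence: pick \emph{any} generating system $S$ of $\Gamma$ (not assumed finite), map the functor to $\wh G^S$ by recording the images of the generators, and note that the image is a closed subscheme; since an arbitrary product of affine $\bZ_\ell$-schemes is affine (being $\Spec$ of a large tensor product) and a closed subscheme of an affine scheme is affine, the claim follows with no further reduction. Your route instead builds this up in stages — free groups, then finitely presented, then a filtered limit over finitely generated (or finitely presented) pieces — which is also valid but strictly more machinery than the paper needs: the detour through filtered limits of affine schemes can be collapsed by just allowing the generating set to be infinite from the outset, since the same cocycle bookkeeping you carry out for a single relation word works verbatim for infinitely many generators and relations. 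One small point worth flagging in your writeup: finitely generated groups need not be finitely presented, so in the "finitely generated subgroup" version of your limit argument you still have to allow infinitely many defining relations at that stage; this is harmless (an arbitrary intersection of closed subschemes is closed) but should be said explicitly if you go that way.
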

\begin{proof}
By taking a generating system $S$ of $\Gamma$ and associating the images of the elements of $S$, we can see that it is represented by a closed subscheme of $\wh{G}^S$. 
\end{proof}

Let $P_F$ be the wild inertia group defined as the maximal pro-$p$-subgroup of $I_F$. 
We take a lift $\wt{\sigma} \in W_F/P_F$ of $\sigma$ and a topological generator $t$ of $I_F/P_F$, and write 
$(W_F/P_F)^0$ for the subgroup of $W_F/P_F$ generated by $\wt{\sigma}$ and $t$. 
Let $W_F^0$ denote the inverse image of $(W_F/P_F)^0$ in $W_F$. 
Then we put 
\[
 Z^1 (W_F,\wh{G}) = \bigcup_{P} Z^1 (W_F^0/P,\wh{G}) , 
\]
where $P$ runs over open subgroups of $P_{F^*}$ that are normal subgroups of $W_F$. 
We can say that $W_F^0/P$ is a kind of discretization of $W_F/P$, and by using this discretization, it is reduced to the situation where no topology or condensed group appears\footnote{The idea of discretization has already appeared in \cite[\S 4]{HelCurBer}.}. 

We explain the difference between the L-parameters considered in $Z^1 (W_F,\wh{G})$ and the L-parameters in Definition \ref{dfn:SLLp}.
The condition of Definition \ref{dfn:SLLp} \ref{en:relev} is a condition to guarantee that each L-parameter comes from an element of $\Pi(G(F))$, so we do not consider it in the moduli\footnote{Since the L-group does not change even if $G$ is replaced by a quasi-split inner form, we can say that we only need to consider the case where $G$ is quasi-split, in which case all parabolic subgroups are relevant, and we may ignore that condition.}. 
Since we do not impose Frobenius semisimplicity in the moduli of the L-parameters, it is represented by a scheme\footnote{This is related to the fact that the set of semisimple element in an algebraic group is not necessarily locally closed.}. 
Also, in the above moduli, we are considering $\ell$-adic L-parameters instead of L-parameters of $\SL_2$-type. 
If we impose Frobenius semisimplicity, then there is a one-to-one correspondence between the equivalence class of L-parameters $\SL_2$-type in $\ol{\bQ}_{\ell}$-coefficients and the equivalence class of $\ell$-adic L-parameters in $\ol{\bQ}_{\ell}$-coefficients as in \cite[Proposition 1.13, Proposition 1.16]{ImaLLCell}, so either of them can be used when we consider the local Langlands correspondence. 
On the other hand, the are very different when we consider the moduli of the L-parameters: the moduli of $\ell$-adic L-parameters allows us to capture the change of the monodromy action continuously.
Moreover, if Frobenius semisimplicity is not imposed, there exist $\ell$-adic L-parameters that do not come from L-parameters of $\SL_2$-type even at $\ol{\bQ}_{\ell}$-valued points, as shown in \cite[Example 3.5]{BMIYJMmor}. 

We put $\mathrm{LP}_G =[Z^1 (W_F,\wh{G})/\wh{G}]$ as a quotient stack. 
If $G$ is unramified, we define $Z^1 (W_F,\wh{G})^{\mathrm{ur}}$ as an open and closed subscheme of $Z^1 (W_F,\wh{G})$ determined by the condition that 
\[
 I_{F,\mathrm{c}} \hookrightarrow W_{F,\mathrm{c}} \to \wh{G}(\Lambda_{\mathrm{c},\ell}) \rtimes W_{F,\mathrm{c}} \to \wh{G}(\Lambda_{\mathrm{c},\ell}) \rtimes (W_F/I_F)_{\mathrm{c}}
\]
is trivial, and put $\mathrm{LP}_G^{\mathrm{ur}} =[Z^1 (W_F,\wh{G})^{\mathrm{ur}}/\wh{G}]$. 
We write $\mathrm{IndCoh} (\mathrm{LP}_{G,\ol{\bQ}_{\ell}})$ for the Ind-completion of the derived category of coherent sheaves on 
$\mathrm{LP}_{G,\ol{\bQ}_{\ell}}$. 
For a locally profinite group $\mathsf{G}$, let $D(\mathsf{G},\Lambda)$ denote the derived category of smooth representations of $\mathsf{G}$ over $\Lambda$. 
The following conjecture is a kind of categorification of the local Langlands correspondence. 

\begin{conj}[{\cite[Conjecture 3.6]{HelderIH}, \cite[Conjecture 4.5.1]{ZhuCohLp}}\footnote{In \cite[Conjecture 4.5.1]{ZhuCohLp}, a more general conjecture is stated using the moduli space of L-parameters over $\bZ[1/p]$.}]\label{conj:LLCcat} 
We fix $c \in \ol{\bQ}_{\ell}$ such that $c^2=q$. 
Assume that $G$ is quasi-split, and fix a $G(F)$-conjugacy class $\fw$ of Whittaker data in $\bfC$-coefficients for $G$. 
Then there is a fully faithful functor 
	\[
	\fA_G \colon D (G(F),\ol{\bQ}_{\ell}) \to 
	\mathrm{IndCoh} (\mathrm{LP}_{G,\ol{\bQ}_{\ell}}) 
	\]
satisfying the following: 
\begin{enumerate}
\item 
For $(B,\psi) \in \fw$, we have 
$\fA_G (\cInd_{R_{\mathrm{u}}(B)(F)}^{G(F)} \psi ) \cong \cO_{\mathrm{LP}_{G,\ol{\bQ}_{\ell}}}$. 
\item 
If $G$ is unramified, then 
$\fA_G (\cInd_{K}^{G(F)}1) \cong \cO_{\mathrm{LP}^{\mathrm{ur}}_{G,\ol{\bQ}_{\ell}}}$ for $K \in \mathrm{HS}_{\fw}$. 
\end{enumerate}
\end{conj}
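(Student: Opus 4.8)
To approach Conjecture~\ref{conj:LLCcat}, the plan is to factor $\fA_G$ through the category $D_{\lis}(\Bun_G,\ol{\bQ}_\ell)$ of lisse-\'etale $\ol{\bQ}_\ell$-sheaves on the stack $\Bun_G$ of $G$-bundles on the Fargues--Fontaine curve, and to read off its two normalization properties from the spectral action constructed in \cite{FaScGeomLLC} together with the Satake computations of \S\ref{sec:Satiso}. Recall that $\Bun_G$ carries the Harder--Narasimhan stratification, that its open semistable substack $\Bun_G^1$ of degree $0$ is $[\ast/\ul{G(F)}]$, and that there is a canonical equivalence $D_{\lis}(\Bun_G^1,\ol{\bQ}_\ell)\cong D(G(F),\ol{\bQ}_\ell)$. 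Let $j\colon\Bun_G^1\hookrightarrow\Bun_G$ be the open immersion and $j_!\colon D(G(F),\ol{\bQ}_\ell)\to D_{\lis}(\Bun_G,\ol{\bQ}_\ell)$ the colimit-preserving extension by zero. The Hecke operators $T_V$ for $V\in\Rep(\wh G)$, defined via geometric Satake on the $B_{\mathrm{dR}}^+$-affine Grassmannian, together with the excursion formalism of \cite{FaScGeomLLC}, make $D_{\lis}(\Bun_G,\ol{\bQ}_\ell)$ into a $\Perf(\mathrm{LP}_{G,\ol{\bQ}_\ell})$-linear category, and this spectral action ind-extends to an action of $\mathrm{QCoh}(\mathrm{LP}_{G,\ol{\bQ}_\ell})$. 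Fixing $(B,\psi)\in\fw$, form the Whittaker sheaf $\cW_\psi:=j_!\bigl(\cInd_{R_{\mathrm u}(B)(F)}^{G(F)}\psi\bigr)\in D_{\lis}(\Bun_G,\ol{\bQ}_\ell)$.

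The definition of $\fA_G$ is then forced by the variant of the conjecture living on $\Bun_G$: one expects an equivalence $\mathbb{L}_G\colon D_{\lis}(\Bun_G,\ol{\bQ}_\ell)\xrightarrow{\sim}\mathrm{IndCoh}_{\mathrm{Nilp}}(\mathrm{LP}_{G,\ol{\bQ}_\ell})$ which is $\Perf(\mathrm{LP}_{G,\ol{\bQ}_\ell})$-linear and normalized by $\mathbb{L}_G(\cW_\psi)\cong\cO_{\mathrm{LP}_{G,\ol{\bQ}_\ell}}$, and one sets $\fA_G=\mathbb{L}_G\circ j_!$, followed by the inclusion $\mathrm{IndCoh}_{\mathrm{Nilp}}(\mathrm{LP}_{G,\ol{\bQ}_\ell})\hookrightarrow\mathrm{IndCoh}(\mathrm{LP}_{G,\ol{\bQ}_\ell})$. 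Property~(1) of Conjecture~\ref{conj:LLCcat} then holds by construction, since $\fA_G\bigl(\cInd_{R_{\mathrm u}(B)(F)}^{G(F)}\psi\bigr)=\mathbb{L}_G(\cW_\psi)\cong\cO_{\mathrm{LP}_{G,\ol{\bQ}_\ell}}$. For property~(2), with $G$ unramified and $K\in\mathrm{HS}_\fw$, one uses that $\cInd_K^{G(F)}1$ is unramified, so that $\mathbb{L}_G\bigl(j_!\cInd_K^{G(F)}1\bigr)$ is supported on the open and closed substack $\mathrm{LP}^{\mathrm{ur}}_{G,\ol{\bQ}_\ell}$; that it is exactly the structure sheaf $\cO_{\mathrm{LP}^{\mathrm{ur}}_{G,\ol{\bQ}_\ell}}$ is then pinned down by the compatibility with the Satake isomorphism recorded in \eqref{eq:HGammaiso}, i.e. by $\cH(G(F),K)\cong\Gamma\bigl((\wh G_{\bfC}\rtimes\sigma)\sslash\wh G_{\bfC},\cO\bigr)\cong\Gamma(\mathrm{LP}^{\mathrm{ur}}_{G,\ol{\bQ}_\ell},\cO)$, the last isomorphism because $Z^1(W_F,\wh G)^{\mathrm{ur}}$ is precisely the scheme of such pairs and $\wh G$-invariant functions on it are the functions on the quotient stack.

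The main obstacle is full faithfulness of $\fA_G$ — which would follow from the conjectural equivalence $\mathbb{L}_G$, but a priori concerns only the neutral component. I would reduce it to compact objects: $D(G(F),\ol{\bQ}_\ell)$ is compactly generated by the $\cInd_K^{G(F)}1$ over compact open $K\subset G(F)$, and $\mathrm{IndCoh}(\mathrm{LP}_{G,\ol{\bQ}_\ell})$ by $\Perf(\mathrm{LP}_{G,\ol{\bQ}_\ell})$, so, $\fA_G$ being a colimit-preserving functor, full faithfulness amounts to $\fA_G$ sending a generating family to compact objects and inducing equivalences $R\Hom_{G(F)}\bigl(\cInd_K^{G(F)}1,\cInd_{K'}^{G(F)}1\bigr)\xrightarrow{\sim}R\Hom_{\mathrm{LP}_{G,\ol{\bQ}_\ell}}\bigl(\fA_G\cInd_K^{G(F)}1,\fA_G\cInd_{K'}^{G(F)}1\bigr)$ for all $K,K'$. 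On $\pi_0$ this is exactly the comparison of the Hecke algebras \eqref{eq:EndHiso} with endomorphism rings of structure-sheaf-type objects on $\mathrm{LP}_{G,\ol{\bQ}_\ell}$, which in the hyperspecial case is supplied by Theorem~\ref{thm:Satiso} and \eqref{eq:HGammaiso}. The genuinely hard points are: (i) controlling the higher $\Ext$-groups, for which one needs the derived geometry of $Z^1(W_F,\wh G)$ — flatness and the local complete intersection property from the representability theorem above, which via Lemma~\ref{lem:Z1rep} reduce to the explicit presentation as a closed subscheme of a power of $\wh G$ — together with the nilpotent singular support condition cutting out $\mathrm{IndCoh}_{\mathrm{Nilp}}$, so that the discrepancy between $\mathrm{QCoh}$, $\mathrm{IndCoh}_{\mathrm{Nilp}}$ and $\mathrm{IndCoh}$ is harmless; and (ii) passing from the hyperspecial block, where Satake is available, to the deeper Bernstein blocks, where it is not, and where one must instead exploit the Hecke action itself — the operators $T_V$ act compatibly on both sides, and one must show they generate enough endomorphisms and that the resulting comparison map is an isomorphism. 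In this generality step~(ii) is open; it holds for tori, and is known for $\GL_n$ and a handful of further groups by importing global (local--global compatibility, Eichler--Shimura) inputs, \cf \cite{HanBeicat}, \cite{ZhuCohLp}.
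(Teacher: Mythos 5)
This statement is a \emph{conjecture}, not a theorem: the paper states it, attributes it to the cited sources, and offers no proof — so there is no proof of the paper's to compare your attempt against. With that said, your discussion is a faithful sketch of how Conjecture~\ref{conj:LLCcat} is expected to be a consequence of the geometric categorical conjecture later stated in the paper as Conjecture~\ref{conj:geomcj}: you correctly factor $\fA_G$ as the extension by zero $i^{[1]}_{\natural}$ from $\Bun_G^{[1]}\cong[\ast/\ul{G(F)}]$ (what you call $j_!$) followed by the conjectural Whittaker-normalized equivalence $\cD_{\lis}(\Bun_G,\ol{\bQ}_\ell)^{\omega}\cong\cD_{\mathrm{coh},\mathrm{Nilp}}^{\mathrm{b},\mathrm{qc}}(\mathrm{LP}_{G,\ol{\bQ}_\ell})$ and the inclusion of the nilpotent-support category into $\mathrm{IndCoh}$; property (1) is then the normalization, and property (2) reduces to the Satake computation \eqref{eq:HGammaiso} once one knows the image lands in the unramified component. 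This matches the paper's own discussion of the relation between Conjecture~\ref{conj:LLCcat}, the Satake isomorphism, and Conjecture~\ref{conj:geomcj}.

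The one thing I would emphasize, which you do acknowledge at the end but should be foregrounded: nothing in this is a proof. Even the "easy" parts are conditional — both normalizations (1) and (2) are only forced once the Whittaker-normalized equivalence $\mathbb{L}_G$ is assumed, and the existence and full faithfulness of $\mathbb{L}_G$ is precisely the content of the open geometric conjecture. Your reduction of full faithfulness to the compact generators $\cInd_K^{G(F)}1$ and to controlling higher $\Ext$-groups identifies the right obstruction, and the Iwahori-block result of Ben-Zvi–Chen–Helm–Nadler and the torus case of Zou are correctly cited as the known instances. A minor terminological point: the paper works in the solid/lis formalism, so the colimit-preserving ``extension by zero'' you want is $i^{[1]}_{\natural}$ rather than a classical $j_!$ of $\ol{\bQ}_\ell$-sheaves; for the open immersion $i^{[1]}$ these agree, but it is worth being precise since the six-functor formalism used here is not the torsion-coefficient one of \cite{SchEtdia}.
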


As for the Iwahori block part of Conjecture \ref{conj:LLCcat}, 
there is a result \cite{BCHNCohSpr} by Ben-Zvi--Chen--Helm--Nadler\footnote{A result by Hemo--Zhu is also announced in \cite{ZhuCohLp}.}. 

We explain the relation between Conjecture \ref{conj:LLCcat} and the Satake isomorphism assuming that $G$ is unramified. 
By associating the images of $q$-th powere Frobenius elements, 
we have $Z^1 (W_F,\wh{G})^{\mathrm{ur}} \cong \wh{G} \rtimes \sigma$.
Then we have 
\begin{align*}
 \End (\cInd_K^{G(F)} 1) \cong \cH (G(F),K) &\cong 
 \Gamma ((\wh{G}_{\ol{\bQ}_{\ell}} \rtimes \sigma)\sslash \wh{G}_{\ol{\bQ}_{\ell}},\cO) \\
 &\cong \Gamma (\wh{G}_{\ol{\bQ}_{\ell}} \rtimes \sigma,\cO)^{\wh{G}_{\ol{\bQ}_{\ell}}} \cong 
 H^0 \End (\cO_{\mathrm{LP}^{\mathrm{ur}}_{G,\ol{\bQ}_{\ell}}})
\end{align*}
by \eqref{eq:EndHiso} and \eqref{eq:HGammaiso}. 
Hence the Satake isomorphism means the degree $0$ part of the fully faithfulness 
\[
\End (\cInd_K^{G(F)} 1) \cong \End (\fA_G (\cInd_K^{G(F)} 1)) 
\]
of the functor $\fA_G$ in Conjecture \ref{conj:LLCcat}. 
Thus, although the Satake isomorphism is a classical result, we can say that it already implies a categorification of the local Langlands correspondence. 

The source of the functor $\fA_G$ in Conjecture \ref{conj:LLCcat} is a category of representations, but this is replaced by a geometric one to make an equivalence of categories in the geometric categorical local Langlands conjecture, which will be explained later. 

Also, in \cite{EGHintcatpL}, an analogue of Conjecture \ref{conj:LLCcat} in $p$-adic coefficients is formulated for $\GL_n$ using the Emerton--Gee stack (\cite{EmGeModstpg}), which is the moduli of etale $(\varphi,\Gamma)$-modules.

\section{Perfectoid space}

In this section, we explain the theory of perfectoid spaces. 
The basic reference is \cite{SchPerf}, but we follow \cite{FonPerBou} for the definition of perfectoid rings. 
The perfectoid space is defined as an adic space (\cite{Hugfs}) by Huber, so we first explain adic spaces.

\begin{dfn}
Let $R$ be a topological commutative ring. 
We say that $R$ is an f-adic ring if there exists a finitely generated ideal $I$ of an open subring of $R$ such that $\{ I^n \}_{n \geq 0}$ is a fundamental system of open neighbourhoods of $0$. 
If an f-adic ring $R$ has a topologically nilpotent invertible element, we say that $R$ is a Tate ring. 
\end{dfn}

Let $R$ be an f-adic ring. 
Let $\Cont (R)$ be the set of equivalence classes of continuous valuations on $R$, and we equip it with the topology generated by 
\[
 \left\{ \lvert \cdot \rvert \in \Cont (R) \setmid \lvert f \rvert \leq \lvert g \rvert \neq 0 \right\} \quad \quad (f,g \in R) . 
\]
A subset $S$ of $R$ is said to be bounded if for any open neighborhood $U$ of $0$, there exists some open neighborhood $V$ of $0$ such that $VS \subset U$. 
We put 
$R^{\circ}=\{ r \in R \mid \textrm{$\{ r^n \}_{n \geq 0}$ is bounded} \}$, $R^{\circ \circ}=\{ \textrm{topologically nilpotent elements of $R$} \}$. 
Then $R^{\circ}$ is an integrally closed, open subring of $R$. 

\begin{dfn}
A Huber pair is a pair $(R,R^+)$ of an f-adic ring $R$ and an open subring $R^+$ of $R$ such that $R^+$ is integrally closed in $R$ and $R^+ \subset R^{\circ}$. 
We say that a Huber pair $(R,R^+)$ is complete if $R$ is complete. 
\end{dfn}

For a complete Huber pair $(R,R^+)$, we put 
\[
 \Spa (R,R^+)=\left\{ \lvert \cdot \rvert \in \Cont (R) \setmid \textrm{$\lvert r \rvert \leq 1$ for any $r \in R^+$} \right\}
\]
and equip it with the topology induced from $\Cont (R)$. 
For a complete f-adic ring $R$, we write $\Spa (R)$ for $\Spa (R,R^{\circ})$. 

For $X=\Spa (R,R^+)$, we can construct presheaves $\cO_X$, $\cO_X^+$ of complete topological rings  over $X$ such that 
$\cO_X(X)=R$, $\cO_X^+(X)=R^+$ (\cf \cite[\S 1]{Hugfs}).
If $\cO_X$ is a sheaf, we say that $X$ is an affinoid adic space.
An adic space is defined by gluing affinoid adic spaces (\cf \cite[\S 2]{Hugfs}). 
We write $\lvert X \rvert$ for the underlying topological space of $X$.

\begin{dfn}
Let $X$ be an adic space. 
We say that $X$ is analytic if, for any $x \in X$, there exists an open affinoid adic space $U \subset X$ containing $x$ such that $\cO_X (U)$ is a Tate ring. 
\end{dfn}

\begin{dfn}
Let $f \colon X \to Y$ be a morphism of analytic adic spaces. 
We say that $f$ is a finite etale if, for any point $y$ of $Y$, there exists an affinoid open neighborhood $U$ of $y$ such that $V=f^{-1} (U)$ is an affinoid adic space, $\cO_V(V)$ is finite etale over $\cO_U(U)$ and $\cO^+_V(V)$ is the integral closure of $\cO^+_U(U)$. 
We say that $f$ is etale if, for any point $x$ of $X$, there exists an open neighborhood $U$ of $x$, an open neighborhood $V$ of $f(x)$ and a factorization $U \to W \to V$ of $f|_U$ such that $U \to W$ is an open immersion and $W \to V$ is finite etale. 
\end{dfn}

We can define the etale site $X_{\mathrm{et}}$ of an analytic adic space $X$ using coverings by etale morphisms (\cf \cite[Definition 8.2.19]{KeLiRpHF}).

\begin{dfn}
A complete Tate ring $R$ is said to be perfectoid if the following conditions are satisfied: 
\begin{enumerate}
\item 
$R^{\circ}/p \to R^{\circ}/p ;\ x \mapsto x^p$ is a surjection. 
\item 
$R^{\circ}$ is a bounded subset of $R$. 
\item 
Therre is a topologically nilpotent invertible element $\varpi$ of $R$ such that $p \in \varpi^p R^{\circ}$. 
\end{enumerate}
A perfectoid ring that is a non-archimedian field is called a perfectoid field. 
\end{dfn}

A Huber pair $(R,R^+)$ where $R$ is perfectoid is called a perfectoid Huber pair.
If $(R,R^+)$ is a perfectoid Huber pair, then $\Spa (R,R^+)$ is an affinoid adic space (\cite[Corollary 3.3.19]{KLRelpII}, \cite[Theorem 6.3]{SchPerf}), which is called an affinoid perfectoid space. 
An adic space that is locally isomorphic to an affinoid perfectoid space is called a perfectoid space. 
For an affinoid adic space $\Spa (R)$, let $\Perf_R$ denote 
the category of perfectoid spaces over $\Spa (R)$.

For a $p$-adic complete ring $A$, we put $A^{\flat}=\varprojlim_{x \mapsto x^p} A$ as monoids with respect to the product. Then the natural map $A^{\flat} \to \varprojlim_{x \mapsto x^p} A/pA$ is is an isomorphism of monoids (\cf \cite[Proposition 2.1.2]{FaFoCfv}).
Furthermore, we define 
\begin{equation}\label{eq:flatsum}
 x+y=\left(\lim_{n \mapsto \infty} (x_{i-n}+y_{i-n})^{p^n} \right)_{i \leq 0}
\end{equation}
for $x=(x_i)_{i \leq 0}, y=(y_i)_{i \leq 0} \in A^{\flat}$. 
Then $A^{\flat} \to \varprojlim_{x \mapsto x^p} A/pA$ is an isomorphism of perfect rings of characteristic $p$ (\cf \cite[Corollaire 2.1.4]{FaFoCfv}).

Let $(R,R^+)$ be a perfectoid Huber pair. 
We put 
$R^{\flat}=\varprojlim_{x \mapsto x^p} R$ and 
$R^{+,\flat}=\varprojlim_{x \mapsto x^p} R^+$ 
as topological monoids with respect to the product, and define addition as \eqref{eq:flatsum}. 
Then $(R^{\flat}, R^{+,\flat})$ is a perfectoid Huber pair of characteristic $p$. 
We say that $(R^{\flat}, R^{+,\flat})$ is a tilt of $(R, R^+)$. 
By this, we have the tilting functor 
\[
 \flat \colon \Perf_{\cO_F} \to \Perf_{k_F} ;\ 
 \Spa (R,R^+) \mapsto \Spa (R^{\flat}, R^{+,\flat}) . 
\]

\begin{dfn}
For $S \in \Perf_{k_F}$, an untilt of $S$ over $\cO_F$ is a pair of 
$S^{\sharp} \in \Perf_{\cO_F}$ and an isomorphism 
$(S^{\sharp})^{\flat} \cong S$ in $\Perf_{k_F}$. 
\end{dfn}

In the following, we explain a description untilts. 
We define the functor 
\begin{align*}
 W_{\cO_F} \colon 
 (\textrm{category of perfect rings over $k_{F}$}) 
 \to  
  (\textrm{category of $\fm_F$-adic complete rings over $\cO_F$}) 
\end{align*}
by 
\[
W_{\cO_F} (R)= 
\begin{cases}
	W(R) \otimes_{W(k_F)} \cO_F & (\textrm{if the characteristic of $F$ is $0$})\\ 
	R \wh{\otimes}_{k_F} \cO_F & (\textrm{if the characteristic of $F$ is $p$.}) 
\end{cases}
\]
Then $W_{\cO_F}$ is a left adjoint functor of the functor 
\begin{align*}
 \flat \colon 
 &(\textrm{category of $\fm_F$-adic complete rings over $\cO_F$}) \\ 
 &\to 
 (\textrm{category of perfect rings over $k_{F}$});\ A \mapsto A^{\flat}
\end{align*}
(\cf \cite[Proposition 2.1.7]{FaFoCfv}). 
Hence, for an $\fm_F$-adic complete ring $A$ over $\cO_F$, 
there is the adjoint morphism $\theta_A \colon W_{\cO_F}(A^{\flat}) \to A$. 

\begin{dfn}
Let $(R,R^+)$ be a perfectoid Huber pair over $k_F$. An element of 
\[
[R^{\circ\circ}] +(\fm_F -\fm_F^2) [(R^+)^{\times}] +\fm_F^2 W_{\cO_F}(R^+) 
\]
is called a primitive element of degree $1$ in $W_{\cO_F}(R^+)$. 
An ideal of $W_{\cO_F}(R^+)$ generated by a single primitive element of degree $1$ is called a primitive ideal of degree $1$ in $\cO_F$-coefficients. 
\end{dfn}

\begin{thm}[{\cite[Theorem 3.3.8]{KLRelpII}}]\label{thm:untpri}
The functor 
\[
 (R,R^+) \mapsto (R^{\flat},R^{+,\flat}, \Ker \theta_{R^+}) 
\]
from the category of perfectoid Huber pairs over $\cO_F$ to the 
category of pairs of 
a perfectoid Huber pair over $k_F$ and a primitive ideal of degree $1$ in $\cO_F$-coefficients 
gives an equivalence of categroies, whose quasi-inverse functor is given by 
\[
 (R,R^+,I) \mapsto ((W_{\cO_F}(R^+)/I)[[\varpi_R]^{-1}],W_{\cO_F}(R^+)/I), 
\]
where $\varpi_R$ is a topologically nilpotent invertible element of $R$. 
\end{thm}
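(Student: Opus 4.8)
The plan is to construct the two functors, verify that each is well defined, and check that they are mutually quasi-inverse; by far the most substantial point is that the candidate quasi-inverse genuinely produces a perfectoid Huber pair over $\cO_F$. Fix a uniformizer $\varpi_F$ of $\cO_F$.

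\emph{The forward functor.} Given a perfectoid Huber pair $(R,R^+)$ over $\cO_F$, the pair $(R^\flat,R^{+,\flat})$ is a perfectoid Huber pair over $k_F$ by the tilting construction recalled above, so it suffices to analyse $\theta_{R^+}\colon W_{\cO_F}(R^{+,\flat})\to R^+$. I would first show $\theta_{R^+}$ is surjective: reducing modulo $\varpi_F$ identifies it (using $W_{\cO_F}(A)/\varpi_F\cong A$ for $A$ perfect) with the $0$-th coordinate projection $R^{+,\flat}=\varprojlim_{x\mapsto x^p}R^+\to R^+/\varpi_F R^+$, which is surjective because $R$ is perfectoid, and then successive approximation using $\fm_F$-adic completeness of both sides gives surjectivity of $\theta_{R^+}$. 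Next I would show $\Ker\theta_{R^+}$ is principal (a standard feature of $\theta$ with perfectoid source) and that any generator is primitive of degree $1$: the isomorphism $W_{\cO_F}(R^{+,\flat})/\xi\cong R^+$ with $R$ perfectoid forces the degree-$0$ coefficient of the Teichm\"uller (resp.\ constant, in equal characteristic) expansion of $\xi$ to be topologically nilpotent and the degree-$1$ coefficient to be a unit, which is exactly primitivity. Functoriality is clear.

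\emph{The quasi-inverse functor.} Given $(R,R^+,I)$ with $R$ perfectoid over $k_F$ and $I=(\xi)$ primitive of degree $1$, set $A=W_{\cO_F}(R^+)/\xi$. I would check, in this order: (i) $\xi$ is a non-zero-divisor in $W_{\cO_F}(R^+)$, and more precisely remains so modulo every power of $[\varpi_R]$ — for this, write $\xi=[c]+\varpi_F u'$ with $c\in R^{\circ\circ}$ and $u'$ a unit (primitivity together with $\fm_F$-adic completeness of $W_{\cO_F}(R^+)$ pushes the higher-order tail into the Jacobson radical), and run the iteration $(\varpi_F u')^n\eta=\pm[c^n]\eta$ against the $[\varpi_R]$-adic separatedness and $\varpi_F$-torsion-freeness of $W_{\cO_F}(R^+)$, both of which rest on perfectness of $R^+$; (ii) consequently $A$ is $[\varpi_R]$-torsion-free and $[\varpi_R]$-adically complete, so $\wt R\coloneqq A[[\varpi_R]^{-1}]$ is a complete Tate ring with ring of definition $A$ and pseudo-uniformizer the image of $[\varpi_R]$; (iii) $\wt R$ is perfectoid — boundedness of $\wt R^\circ$ and surjectivity of Frobenius on $\wt R^\circ/p$ are extracted from perfectness of $R$ via the Witt-vector description, imitating the classical computation of untilts of perfectoid fields; (iv) the subring of integral elements of $\wt R$ is $A$ itself, i.e.\ $A$ is open and integrally closed in $\wt R^\circ$.

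\emph{Quasi-inverseness and the main obstacle.} For $(R,R^+)$ over $\cO_F$, surjectivity of $\theta_{R^+}$ gives $W_{\cO_F}(R^{+,\flat})/\Ker\theta_{R^+}\cong R^+$, and inverting the Teichm\"uller lift of a pseudo-uniformizer of $R$ recovers $R$; for $(R,R^+,I)$ over $k_F$, one checks $\wt R^\flat\cong R$, $\wt R^{+,\flat}\cong R^+$ and $\Ker\theta_{\wt R^+}=I$ by Witt-vector computation plus the tilting equivalence applied to $\wt R$, and naturality is formal. I expect the main obstacle to be the well-definedness of the quasi-inverse — showing that $W_{\cO_F}(R^+)/\xi$ is exactly the ring of integral elements of a perfectoid Tate ring over $\cO_F$, i.e.\ steps (iii) and (iv) above. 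This is where perfectness of $R$, primitivity of $\xi$, and the genuine content of the tilting equivalence (not merely the formal construction of the tilt) have to be combined, and where the interplay between the $\fm_F$-adic and the Tate topologies is delicate; everything else reduces to bookkeeping with Witt vectors and adic topologies.
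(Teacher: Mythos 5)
The paper does not prove this theorem; it is quoted verbatim from Kedlaya--Liu \cite[Theorem 3.3.8]{KLRelpII} and used here as a black box, so there is no in-paper argument to compare against. That said, your outline faithfully reproduces the standard route from the literature (Kedlaya--Liu \cite{KLRelpII}, Fargues--Fontaine \cite{FaFoCfv}, Scholze--Weinstein \cite{ScWeBLp}): surjectivity of $\theta_{R^+}$ by reduction modulo $\fm_F$ together with successive approximation, principality of $\Ker\theta_{R^+}$ with a primitive degree-one generator, and then the four checks on the candidate quasi-inverse. You also correctly single out steps (iii) and (iv) as the load-bearing part.

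The one place your sketch is genuinely compressed is the non-zero-divisor step. The iteration giving $\varpi_F^n\eta = \pm(u')^{-n}[c^n]\eta$ is the right move, but the way to close it is to compare Teichm\"{u}ller expansions: $[c^n]\,W_{\cO_F}(R^+)$ is exactly the set of elements all of whose Teichm\"{u}ller coefficients lie in $c^n R^+$, so the identity forces every Teichm\"{u}ller coefficient of $\eta$ into $\bigcap_{n\ge 0}c^n R^+$, and this intersection vanishes because $c$ is topologically nilpotent and $R^+$ is $\varpi_R$-adically separated. Phrasing it as ``run the iteration against $[\varpi_R]$-adic separatedness and $\varpi_F$-torsion-freeness'' elides this; in particular $\varpi_F$-torsion-freeness of $W_{\cO_F}(R^+)$ is not by itself enough, since it does not survive reduction modulo powers of $[\varpi_R]$, so one cannot simply cancel $\varpi_F^n$ after passing to the quotient. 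With that clarification your plan would reproduce the cited proof in both structure and substance.
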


\begin{dfn}
Let $X$ be a perfectoid space. 
We say that $X$ is strictly totally disconnected if $X$ is quasi-compact, quasi-separated and any etale covering of $X$ splits. 
\end{dfn}

The following proposition will be used later in Definition \ref{dfn:pefectconst}. 

\begin{prop}\label{prop:stdtopos} 
Let $X$ be a strictly totally disconnected perfectoid space.
If $X'$ is a perfectoid space which is etale over $X$, then there exists an open covering $\{ U_i \}_{i \in I}$ of $X'$ such that $U_i \to X$ is an open immersion for any $i \in I$. 
Thus, the etale topos of $X$ and the topos of $\lvert X \rvert$ are equivalent.
\end{prop}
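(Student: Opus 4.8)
The plan is to prove the two assertions in turn, deducing the topos statement from the covering statement. First I would recall the structure theory of strictly totally disconnected perfectoid spaces from \cite{ScWeBLp}: such an $X$ is affinoid, $\Spa(R,R^+)$ with $R$ perfectoid, and crucially $R$ is a product-like ring whose Spa is, as a topological space, a profinite set (it is spectral, quasi-compact, quasi-separated, and every connected component is a single point, since any covering splits forces the space to be totally disconnected and the valuation spectrum to have no nontrivial specializations). The key local input is that for $X$ strictly totally disconnected, the rational subsets form a basis of clopen sets, and that $\cO_X(U) = \cO_X^+(U)[\varpi^{-1}]$ behaves well on such clopens.

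For the covering statement, let $f\colon X' \to X$ be \'etale. By definition of \'etale, locally on $X'$ the map factors as $U \to W \to V$ with $U \to W$ an open immersion and $W \to V \subset X$ finite \'etale. So it suffices to treat the case of a finite \'etale morphism $g\colon W \to V$ with $V = \Spa(S,S^+)$ an affinoid open of $X$; note $V$ is again strictly totally disconnected (clopen subspaces inherit the property). Then $\cO_W(W) = T$ is finite \'etale over $S$. The main step is to show such a $T$ is, after passing to a clopen covering of $V = |V|$ (a profinite set), a finite product of copies of $S$ — equivalently, that any finite \'etale algebra over $S$ splits. This is where I would use that finite \'etale algebras over $R$ are classified by finite \'etale $\mathbb{Z}/n$-covers of the profinite set $|X|$, which all split by the hypothesis that every \'etale covering of $X$ splits (applied to $W \to X$ itself, or to its Galois closure). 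Concretely: the profinite set $|V|$ can be written as a cofiltered limit of finite discrete sets, a finite \'etale algebra descends to a finite level, and over a finite discrete set a finite \'etale algebra is a product of copies of the base; pulling back along a clopen partition refining that finite level gives the desired covering $\{U_i\}$ of $X'$ by opens mapping to $X$ via open immersions (compositions of open immersions $U_i \hookrightarrow W \hookrightarrow V \hookrightarrow X$, since each $U_i$ maps isomorphically onto a clopen of a copy of the base).

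Given the covering statement, the topos equivalence is essentially formal. The functor $|{-}|$ from (\'etale $X$-spaces) to (topological spaces over $|X|$) induces a morphism of sites $X_{\mathrm{et}} \to |X|_{\mathrm{Open}}$; I would check it is an equivalence by checking it on objects and morphisms. Surjectivity on objects up to refinement: any \'etale $X'$ is, by the covering statement, covered by opens that are open immersions into $X$, hence comes from an actual open subspace after re-gluing, and gluing data for \'etale maps over $X$ matches gluing data for open subsets of $|X|$ since $\mathrm{Hom}_X(U,U')$ for $U,U'$ open in $X$ is just the set of inclusions of underlying opens (the structure sheaf is determined). Fully faithfulness on morphisms reduces to the same input: a morphism of \'etale $X$-spaces is determined by the map of underlying spaces because, locally, both sides are opens of $X$ and morphisms of opens of an adic space over a fixed base are inclusions of topological subspaces. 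Finally, covers correspond: \'etale covers of $X$, being refinable to collections of open immersions by the covering statement, are exactly open covers of $|X|$. Hence the categories of sheaves agree, i.e.\ $X_{\mathrm{et}} \simeq \mathrm{Sh}(|X|)$.

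The main obstacle I expect is the splitting step for finite \'etale algebras: namely, turning the abstract hypothesis "every \'etale covering of $X$ splits" into the concrete statement that a given finite \'etale $R$-algebra is a product of copies of $R$ after a clopen partition of $|X|$. This requires (a) the fact that finite \'etale algebras over a perfectoid affinoid $\Spa(R,R^+)$ with totally disconnected underlying space are "constant" on connected components — which rests on the structure of such $R$ and the behavior of $R^+$ under the partition — and (b) a descent/spreading-out argument writing $|X|$ as a limit of finite sets and the algebra as pulled back from a finite level. Everything else — the reduction via the \'etale factorization, and the formal topos comparison — is routine once this is in hand.
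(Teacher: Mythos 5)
Your reduction to the finite \'etale case matches the paper's, and you correctly identify the crux: showing that a finite \'etale $R$-algebra becomes a finite product of copies of the base after a clopen refinement. But the ``concretely'' step where you try to carry this out is where the argument breaks. The claim that ``over a finite discrete set a finite \'etale algebra is a product of copies of the base'' is false in general: a finite \'etale algebra over a product of fields is a product of finite separable field extensions, and these need not be trivial. Similarly, finite \'etale $R$-algebras are not classified just by finite covers of the profinite set $\lvert X\rvert$; the residue fields matter, and nothing in the cofiltered-limit-of-finite-sets picture alone makes them separably closed. You do gesture at the correct input (``which all split by the hypothesis that every \'etale covering of $X$ splits, applied to $W\to X$ itself, or to its Galois closure''), but the argument then abandons it for the descent-to-finite-level story, which on its own does not yield the splitting.

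The paper's proof uses the hypothesis head-on and avoids all of this. After the same reduction to a finite \'etale $f\colon X'\to X$, replace $X$ by $\Image f$ (a clopen, hence again strictly totally disconnected) so that $f$ is a finite \'etale \emph{covering}; the defining property of strict total disconnectedness then produces a section $s\colon X\to X'$. Since $s$ is simultaneously finite \'etale and a monomorphism, it is a clopen immersion, so $X' = s(X)\amalg X'_1$ with $X'_1\to X$ still finite \'etale but of strictly smaller degree; iterating terminates and gives the desired covering by open immersions into $X$. This requires no structure theory of $R$, no statement about residue fields, and no profinite descent. Your version could be repaired by replacing the ``concretely'' step with exactly this induction (or by passing to the Galois closure and observing that a torsor with a section is trivial), but as written there is a genuine gap. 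Your deduction of the topos equivalence from the covering statement is fine and is the same formal step the paper leaves implicit.
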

\begin{proof}
It suffices to show the first claim. 
By writing the etale morphism $f \colon X' \to X$ as a composition of an open embedding $X'\to Y$ and a finite etale morphism $Y \to X$ locally on $X$ and $X'$, and replacing $X'$ by $Y$, we may assume that $f$ is finite etale. 
We may further replace $X$ by $\Image f$ and assume that $f$ is a finite etale covering. 
Since $f$ splits, we can write $X'=X \amalg X_1'$ and $X_1' \to X$ is a finite etale morphism. 
We can replace $X$ with the image of $X_1'\to X$ and repeat the same process. 
\end{proof}

\section{Diamond and v-stack}

\begin{dfn}
We say that a morphism $\Spa (B,B^+)\to \Spa (A,A^+)$ of affinoid perfectoid spaces is affinoid pro-etale if there is a directed inverse system $\{ \Spa (A_i,A_i^+)\}_{i \in I}$ of affinoid perfectoid spaces which are etale over $\Spa(A,A^+)$, and $(B,B^+)$ is isomorphic to the completion of $\mathrm{colim}_{i \in I} (A_i,A_i^+)$.
We say that a morphism $f \colon Y \to X$ of perfectoid spaces is pro-etale if for any $y \in Y$, there exists an affinoid open neighborhood $U$ of $f(y)$ and an affinoid open neighborhood $V \subset f^{-1}(U)$ of $y$ such that $f|_V \colon U \to V$ is affinoid pro-etale. 
\end{dfn}

\begin{dfn}\label{dfn:Ccov}
Let $\sC$ be a class of morphisms of perfectoid spaces. 
We say that a family $\{ f_i \colon Y_i \to X \}_{i \in I}$ of morphisms of perfectoid spaces is $\sC$-covering if the following holds: 
\begin{enumerate}
\item 
For any $i \in I$, we have $f_i \in \sC$. 
\item\label{en:cocfin}
For any quasi-compact open subset $U$ of $X$, there exists a finite subset $J \subset I$ and a family of quasi-compact open subsets $\{ V_j \subset Y_j \}_{j \in J}$ such that $U=\bigcup_{j \in J} f_j (V_j)$. 
\end{enumerate}
\end{dfn}

If we consider all the pro-etale morphisms as $\sC$ in Definition \ref{dfn:Ccov}, $\sC$-covering is called pro-etale covering
\footnote{In Definition \ref{dfn:Ccov}, if all morphisms in $\sC$ are open maps, then the condition of \ref{en:cocfin} can be replaced by $X=\bigcup_{i \in I} f_i (Y_i)$. However, since pro-etale morphisms are not necessarily open maps, we impose the condition \ref{en:cocfin}.}. 
Thus we can define a pro-etale sheaf.
By \cite[Corollary 8.6]{SchEtdia}, 
\[
 h_X \colon \Perf_{\cO_F}^{\mathrm{op}} \to \mathrm{Sets} ;\ 
 Y \mapsto \Hom (Y,X) 
\]
is a pro-etale sheaf for $X \in \Perf_{\cO_F}$. 
Let $k$ be an algebraic field extension of $k_F$. 

\begin{dfn}\label{dfn:dia}
We say that a pro-etale sheaf $\cD$ on $\Perf_k$ 
is a diamond over $k$ if there are $X,R \in \Perf_k$ and pro-etale morphisms $s,t \colon R \to X$ such that $h_{(s,t)} \colon h_R \to h_X \times h_X $ gives an equivalence relation and $\cD \cong h_X /h_R$. 
\end{dfn}

For $X \in \Perf_k$, the pro-etale sheaf $h_X$ is a diamond, for which we simply write $X$ in the following. 

If we consider all the morphisms of perfectoid spaces as $\sC$ in Definition \ref{dfn:Ccov}, $\sC$-covering is called v-covering. 
Thus we can define a v-sheaf.

\begin{ex}
Let $T$ be a topological space. 
We write $\ul{T}$ for the functor sending $S \in \Perf_k$ to $C^0(\lvert S \rvert,T)$. 
By \cite[Lemma 1.1.10]{HubEtadic} and \cite[Lemma 2.5]{SchEtdia}, $\ul{T}$ is a v-shaef.
\end{ex}

A stack with respect to the topology given by v-covering on $\Perf_k$ is called a v-stack over $k$. 

\begin{dfn}
we say that 
a morphism $f \colon X \to Y$ of v-stacks over $k$ is an open immersion if $X \times_Y Y' \to Y'$ is an open immersion in $\Perf_k$ for any $Y' \in \Perf_k$ and $Y' \to Y$. 
\end{dfn}

For v-shaef and v-stack, we consider the following notions given by set-theoretic conditions\footnote{These concepts make substantial sense only when we put set-theoretic conditions like \cite[\S 4]{SchEtdia} on perfectoid spaces, but here we give definitions to adjust terminologies to \cite{FaScGeomLLC} and \cite{SchEtdia}.}. 

\begin{dfn}
We say that a v-shaef $X$ on $\Perf_k$ is a small v-shaef if there exist some $Y \in \Perf_k$ and a surjection $Y \to X$ of v-shaeves. 
\end{dfn}

By \cite[Proposition 11.9]{SchEtdia} and Definition \ref{dfn:dia}, a diamond over $k$ is a small v-shaef on $\Perf_k$. 
Also, as we can see from the definition, any open sub-v-shaef of a diamond is a diamond\footnote{Actually, more strongly, it is known by \cite[Proposition 11.10]{SchEtdia} that any sub-v-shaef of a diamond is a diamond.}.

\begin{dfn}\label{dfn:smvst}
We say that 
a v-stack $X$ over $k$ is a small v-stack if there exist some $Y \in \Perf_k$ and a surjection $Y \to X$ of v-stacks such that $Y \times_X Y$ is a small v-shaef. 
\end{dfn}

Let $X$ be a small v-stack over $k$. We put 
\[
 \lvert X \rvert = \{ \Spa (K,K^+) \to X \}/{\sim} , 
\]
where $(K,K^+)$ is a Huber pair such that $K$ is a perfectoid field over $k$, and $\sim$ is an equivalence relation generated by relations that $\Spa (K,K^+) \to X$ and $\Spa (K',K'^+) \to \Spa (K,K^+) \to X$ are equivalent 
if $\Spa (K',K'^+) \to \Spa (K,K^+)$ is surjective. 
If we take $Y$ as Definition \ref{dfn:smvst}, 
$R \in \Perf_k$ and a surjection $R \to Y \times_X Y$ of v-shaeves, 
then there is a natural bijection 
$\lvert Y \rvert / \lvert R \rvert \cong \lvert X \rvert$ and 
the topology on $\lvert X \rvert$ induced by this bijection 
is independent of $Y$, $R$ (\cf \cite[Proposition 12.7]{SchEtdia}). 
We call $\lvert X \rvert$ with this topology the underlying topological space of $X$. 
Associating a morphism from $S \in \Perf_k$ to $X$ with the induced morphism  $\lvert S\rvert  \to \lvert X \rvert$, we have 
\begin{equation}\label{eq:vsttop}
X \to \ul{\lvert X \rvert} . 
\end{equation}

\begin{dfn}
We say that a morphism $f \colon X \to Y$ of v-stacks over $k$ is a closed immersion if $X \times_Y Y' \to Y'$ is a quasi-compact\footnote{See \cite[VI D\'{e}finition 1.1, D\'{e}finition 1.7]{SGA4-2} for definitions of sheaves and morphisms between them being quasi-compact.} injection of v-shaeves and $\lvert X \times_Y Y' \rvert \to \lvert Y' \rvert$ is a closed immersion for any small v-shaef $Y'$ on $\Perf_k$ and $Y' \to Y$\footnote{This definition is equivalent to \cite[Definition 10.7 (ii)]{SchEtdia} by \cite[Corollary 10.6]{SchEtdia}. See also \cite[Definition 17.4.2]{ScWeBLp} for a definition in the case of v-shaeves.}. 
\end{dfn}

\begin{dfn}
We say that a morphism $f \colon X \to Y$ of v-stacks over $k$ is separated if the diagonal morphism $\Delta_f \colon X \to X \times_Y X$ is a closed immersion. 
\end{dfn}

\begin{dfn}
Let $X$ be a v-shaef on $\Perf_k$. 
If $X$ is quasi-compact, quasi-separated\footnote{See \cite[VI D\'{e}finition 1.13]{SGA4-2} for a definition of sheaves being quasi-separated.} and $\{ \lvert U \rvert \}_{U \subset X}$ gives a open basis of $\lvert X \rvert$ 
when $U$ runs through quasi-compact open sub-v-shaeves of $X$, 
then $X$ is said to be spatial. 
We say that $X$ is locally spatial if $X$ has an open cover by spatial v-shaeves. 
\end{dfn}

\begin{dfn}
Let $f \colon X \to Y$ be a locally separated morphism\footnote{we say that $f$ is locally separated if it is separated on some open covering of $X$. If $X \in \Perf_k$, then $f$ is always locally separated.} of diamonds. 
We say that $f$ is etale if $X \times_Y Y' \in \Perf_k$ and $X \times_Y Y' \to Y'$ is etale for any $Y' \in \Perf_k$ and $Y' \to Y$. 
\end{dfn}

Using coverings by etale morphism, we can define an etale site $X_{\mathrm{et}}$ of a locally spatial diamond $X$ (\cite[Definition 14.1]{SchEtdia}). 

For an adic space $X$ over $\Spa (\cO_F)$, we define $X^{\diamondsuit} \colon \Perf_{k_F}^{\mathrm{op}} \to \mathrm{Sets}$ by 
\[
X^{\diamondsuit} (S)=\{ (S^{\sharp},(S^{\sharp})^{\flat} \cong S,S^{\sharp} \to X) \}, 
\]
where 
$(S^{\sharp},(S^{\sharp})^{\flat} \cong S)$ is an untilt of $S$ over $\cO_F$, and $S^{\sharp} \to X$ is a morphism over $\Spa (\cO_F)$. 

\begin{thm}[{\cite[Lemma 15.6]{SchEtdia}}]
If $X$ is an analytic adic space over $\Spa (\cO_F)$, then $X^{\diamondsuit}$ is a locally spatial diamond over $k_F$ and $\lvert X \rvert \cong \lvert X^{\diamondsuit} \rvert$. 
Furthermore, the functor $\diamondsuit$ induces an equivalence $X_{\mathrm{et}} \cong X^{\diamondsuit}_{\mathrm{et}}$ of sites. 
\end{thm}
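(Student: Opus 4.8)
The plan is to prove the three assertions in turn, following the general philosophy that $\diamondsuit$ transforms the adic-theoretic structure of $X$ into v-sheaf-theoretic data by tilting, and that the tilting equivalence (Theorem \ref{thm:untpri}) is what makes everything work. All three claims are local on $X$, so I would first reduce to the case $X = \Spa(A, A^+)$ with $(A, A^+)$ an analytic affinoid adic algebra over $\cO_F$; in the analytic case one can further reduce to the Tate case after possibly passing to a rational cover.

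First I would show $X^\diamondsuit$ is a locally spatial diamond. The key input is that, by Theorem \ref{thm:untpri} and approximation by perfectoid covers, for an analytic affinoid $X = \Spa(A, A^+)$ one can find a perfectoid space $\wt{X}$ with a pro-etale (in fact pro-finite-etale, quasi-pro-etale) surjection $\wt{X} \to X$ in the world of adic spaces — concretely, adjoin enough $p$-power roots and Frobenius-type towers, as in Scholze's work on the pro-etale site and the proof that rigid spaces are diamonds. Applying $\diamondsuit$ and using that $\diamondsuit$ commutes with fiber products and sends pro-etale surjections of analytic adic spaces to pro-etale surjections of v-sheaves, one gets a pro-etale surjection $\wt{X}^\diamondsuit \to X^\diamondsuit$ with $\wt{X}^\diamondsuit = \wt{X}^\flat$ a perfectoid space (here one uses that for a perfectoid $\wt{X}$ over $\cO_F$, $\wt{X}^\diamondsuit \cong \wt{X}^\flat$ by the universal property of tilting and Theorem \ref{thm:untpri}). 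The fiber product $\wt{X}^\diamondsuit \times_{X^\diamondsuit} \wt{X}^\diamondsuit = (\wt{X} \times_X \wt{X})^\diamondsuit$ is again of this form, hence a perfectoid space, so $X^\diamondsuit$ is presented as a quotient $h_{\wt{X}^\flat}/h_R$ by a pro-etale equivalence relation: it is a diamond. Spatiality is then checked on the presentation, using that quasi-compact open subsets of $X$ correspond to quasi-compact open sub-v-sheaves (the topology is controlled by the perfectoid cover), yielding local spatiality.

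Next, the homeomorphism $\lvert X \rvert \cong \lvert X^\diamondsuit \rvert$. Both underlying spaces are computed from points valued in (perfectoid) valuation fields; a point of $X$ with values in a perfectoid (or after a rank-extension argument, arbitrary) field $K$ together with a valuation ring $K^+$ gives, by tilting, a point of $X^\diamondsuit$, and conversely an untilt datum over a perfectoid field $C$ of characteristic $p$ together with a map $C^\sharp \to X$ gives a point of $\lvert X \rvert$; the equivalence relation defining $\lvert X \rvert$ for adic spaces matches the one defining $\lvert \cdot \rvert$ for small v-stacks under this correspondence. One then checks this bijection is a homeomorphism by comparing the rational-subset topology on $\Spa(A,A^+)$ with the quasi-compact-open topology on $\lvert X^\diamondsuit \rvert$, again passing through the perfectoid cover $\wt{X} \to X$ where both topologies are manifestly the same.

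Finally, the equivalence of etale sites $X_{\mathrm{et}} \cong X^\diamondsuit_{\mathrm{et}}$. The functor in one direction sends an etale $Y \to X$ of analytic adic spaces to $Y^\diamondsuit \to X^\diamondsuit$, which is etale in the diamond sense because $\diamondsuit$ preserves finite etale morphisms (by the tilting equivalence for finite etale algebras over perfectoid rings, Theorem \ref{thm:untpri} at the level of finite etale covers) and open immersions, and both notions of ``etale'' are built by gluing these. Full faithfulness and essential surjectivity reduce, via the perfectoid cover $\wt{X} \to X$ and descent, to the statement that $\wt{X}_{\mathrm{et}} \cong \wt{X}^\flat_{\mathrm{et}}$ for perfectoid $\wt{X}$ — that is, the fundamental tilting equivalence of etale sites for perfectoid spaces — together with the fact that etale covers of $X$ and of $X^\diamondsuit$ descend along the same equivalence relation. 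I expect the main obstacle to be the careful construction of the pro-etale perfectoid cover $\wt{X} \to X$ of a general analytic adic space and the verification that $\diamondsuit$ behaves well with respect to it (preserving surjectivity, fiber products, and the relevant topologies); once that ``key approximation lemma'' is in place, the three statements follow by transporting the perfectoid tilting equivalence along the cover. In practice I would simply cite \cite[Lemma 15.6]{SchEtdia} for the precise statement and indicate this line of argument.
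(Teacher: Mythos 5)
The paper gives no proof of this statement: it is quoted verbatim as \cite[Lemma 15.6]{SchEtdia}, so there is no ``paper's own proof'' to compare against. What you have written is a reconstruction of the proof from that source, and your outline does match its actual structure closely: reduce to the affinoid Tate case, build a pro-\'etale perfectoid cover $\wt{X}\to X$, observe that $\wt{X}^{\diamondsuit}\cong \wt{X}^{\flat}$ and that $\diamondsuit$ commutes with fibre products, present $X^{\diamondsuit}$ as a quotient by a pro-\'etale equivalence relation, and then transport the topological and \'etale-site identifications along the cover, using the perfectoid tilting equivalence for the bottom of the induction.

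Two small cautions are worth recording. First, the existence of a pro-\'etale perfectoid cover of an arbitrary analytic Tate-Huber pair is not really a consequence of ``Theorem \ref{thm:untpri} plus approximation''; in \cite{SchEtdia} it is a separate, substantive lemma (the one directly preceding Lemma 15.6), proved by an iterated \'etale tower that makes the Frobenius on $A^{+}/\pi$ surjective in the limit, and it is that lemma doing the real work. Your phrase ``adjoin enough $p$-power roots and Frobenius-type towers'' points in the right direction, but the way you cite Theorem \ref{thm:untpri} there slightly misattributes the source of difficulty — Theorem \ref{thm:untpri} governs the tilting \emph{correspondence for perfectoids}, not the existence of a perfectoid cover of a non-perfectoid $X$. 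Second, your claim that $\diamondsuit$ sends pro-\'etale surjections of analytic adic spaces to pro-\'etale (in fact quasi-pro-\'etale) surjections of v-sheaves is correct but is itself one of the items established in that section of \cite{SchEtdia}, not free; it deserves the same status as the cover-existence lemma in your list of required inputs. With those two points made explicit, your sketch is a faithful account of the argument, and your final remark — that in a survey one would simply cite the lemma — is exactly what the paper does.
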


\begin{ex}
Let $X=\Spa (\bQ_p \langle T \rangle ,\bZ_p \langle T \rangle )$. 
Then $\wt{X}=\Spa (\bQ_p^{\mathrm{cyc}} \langle T^{1/p^{\infty}} \rangle ,\bZ_p^{\mathrm{cyc}} \langle T^{1/p^{\infty}} \rangle )$ is a perfectoid space which is a $(\bZ_ p^{\times} \times \bZ_p)$-covering of $X$, and we have $X^{\diamondsuit} \cong \wt{X}^{\flat}/\ul{\bZ_p^{\times} \times \bZ_p}$,
where $\bZ_p^{\mathrm{cyc}}$ is the $p$-adic completion of $\bZ_p(\mu_{p^{\infty}})$, $\bQ_p^{\mathrm{cyc}}=\bZ_p^{\mathrm{cyc}}[1/p]$ and the action of $\bZ_p^{\times}$ and $\bZ_p$ on $\wt{X}$ is given by the natural actions on $\mu_{p^{\infty}}$ and $T^{1/p^{\infty}}$, respectively. 
\end{ex}

\begin{dfn}
Let $f \colon X \to Y$ be a locally separated morphism of diamonds. 
We say that $f$ is quasi-pro-etale if 
$X \times_Y Y' \in \Perf_k$ and $X \times_Y Y' \to Y'$ is pro-etale for any strictly totally disconnected perfectoid space $Y'$ and $Y' \to Y$. 
\end{dfn}

If we take a diamond $\cD$ and $X$, $R$ as in Definition \ref{dfn:dia}, the natural morphism $X \to \cD$ is quasi-pro-etale (\cf \cite[Proposition 11.3]{FaScGeomLLC}). 
We can also define a quasi-pro-etale site $X_{\mathrm{qproet}}$ of a diamond $X$ (\cf \cite[Definition 14.1]{SchEtdia}).

\begin{dfn}
Let $X$ be a small v-stack over $k$.
We define $X_{\mathrm{v}}$ to be the site whose objects are small v-shaeves $Y$ over $X$ and coverings are families $\{ Y_i \to Y \}_{i \in I}$ of morphisms over $X$ such that $\bigcup_{i \in I} Y_i \to Y$ is a surjection of v-shaeves.  
\end{dfn}

Let $\Lambda$ be a ring. 
For a topos $T$, let $D(T,\Lambda)$ denote the derived category of sheaves of $\Lambda$-modules on $T$.

\begin{prop}[{\cite[Proposition 14.10]{SchEtdia}}]
If $X \in \Perf_k$ is strictly totally disconnected, then $D(X_{\textrm{et}},\Lambda )\to D(X_{\mathrm{v}} ,\Lambda )$ given by pullback is fully faithful. 
\end{prop}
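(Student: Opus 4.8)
The functor in question is pullback $\nu^{*}$ along the morphism of sites $\nu\colon X_{\mathrm{v}}\to X_{\mathrm{et}}$, which is exact; since $\nu^{*}$ is left adjoint to $R\nu_{*}$, it is fully faithful precisely when the unit $M\to R\nu_{*}\nu^{*}M$ is an isomorphism for all $M\in D(X_{\mathrm{et}},\Lambda)$. Using the hypercohomology spectral sequence $R^{i}\nu_{*}\bigl(\nu^{*}\mathcal{H}^{j}(M)\bigr)\Rightarrow\mathcal{H}^{i+j}(R\nu_{*}\nu^{*}M)$, it suffices, for $M$ bounded below, to treat the case where $M=\cF$ is a single étale sheaf of $\Lambda$-modules, i.e.\ to show $R^{0}\nu_{*}\nu^{*}\cF\cong\cF$ and $R^{i}\nu_{*}\nu^{*}\cF=0$ for $i>0$; the unbounded case follows by the same method, using that over a strictly totally disconnected base $R\nu_{*}$ commutes with the Postnikov limits computing $M$ (\cite{SchEtdia}).

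The statement $R^{i}\nu_{*}\nu^{*}\cF=0$ ($i>0$), $R^{0}\nu_{*}\nu^{*}\cF=\cF$ can be checked on stalks. Using Proposition \ref{prop:stdtopos} to identify $X_{\mathrm{et}}$ with the topos of the spectral space $\lvert X\rvert$, the stalk of $R^{i}\nu_{*}\nu^{*}\cF$ at a point $x$ is $\mathrm{colim}_{U\ni x}H^{i}(U_{\mathrm{v}},\nu^{*}\cF)$, and since v-cohomology commutes with the cofiltered limit over quasi-compact opens $U\ni x$ (\cite{SchEtdia}) this equals $H^{i}(Y_{x,\mathrm{v}},\nu^{*}\cF)$, where $Y_{x}$ is the corresponding pro-open subspace of $X$. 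By the structure theory of strictly totally disconnected spaces (\cite{SchEtdia}), $Y_{x}=\Spa(C,C^{+})$ with $C$ an algebraically closed perfectoid field, and such a space is again strictly totally disconnected; thus one reduces to showing, for $X=\Spa(C,C^{+})$ with $C$ algebraically closed and $\cF$ a sheaf on $X_{\mathrm{et}}$, that $H^{0}(X_{\mathrm{v}},\nu^{*}\cF)=\cF(X)$ and $H^{i}(X_{\mathrm{v}},\nu^{*}\cF)=0$ for $i>0$.

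This v-acyclicity is the heart of the matter. Since $X$ is quasi-compact and quasi-separated, every v-cover is refined by a single surjection $Y\to X$, and by the existence of v-covers by strictly totally disconnected perfectoid spaces (\cite{SchEtdia}) one may take $Y$ strictly totally disconnected. The \v{C}ech--to--derived spectral sequence then reduces the claim to the exactness of the \v{C}ech complex of $\nu^{*}\cF$ attached to such a cover, and establishing this is where the theory developed in \cite{SchEtdia} is essential: one uses that $\pi_{0}(Y)\to\pi_{0}(X)$ is a surjection onto an extremally disconnected, hence projective, profinite set and that the connected components involved are of the form $\Spa(C_{1},C_{1}^{+})$ with $C_{1}$ algebraically closed, which together force a v-cover of a strictly totally disconnected space to be cohomologically trivial. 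The main obstacle is precisely this last step --- the vanishing of higher v-cohomology on a strictly totally disconnected space with coefficients in an arbitrary, not necessarily locally constant, étale sheaf; everything preceding it is formal topos theory together with Proposition \ref{prop:stdtopos}.
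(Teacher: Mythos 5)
The paper itself offers no proof of this proposition; it is cited verbatim from \cite[Proposition 14.10]{SchEtdia}, so there is no in-paper argument to compare against. Your write-up supplies a plausible reduction scheme but does not constitute a proof, and at one point invokes a claim that is false.

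The conceded step \emph{is} the theorem. Everything you establish (the unit-of-adjunction criterion, the reduction to a single sheaf, passage to stalks via Proposition \ref{prop:stdtopos}, and the \v{C}ech-to-derived spectral sequence) is formal topos theory. The actual content of \cite[Proposition 14.10]{SchEtdia} is precisely the vanishing of higher v-cohomology of $\nu^{*}\cF$ on a strictly totally disconnected space for an arbitrary \'etale sheaf $\cF$, and you explicitly defer this --- ``establishing this is where the theory developed in \cite{SchEtdia} is essential''. A reduction that terminates at the statement being proved is not a proof.

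Moreover, the mechanism you sketch for the skipped step does not operate. You assert that $\pi_{0}(Y)\to\pi_{0}(X)$ surjects onto an extremally disconnected, hence projective, profinite set. Strict total disconnectedness requires only that every \'etale cover split; it does \emph{not} force $\pi_{0}$ to be extremally disconnected. For instance, with $C$ an algebraically closed perfectoid field and $S$ an arbitrary profinite set, the space $X=\Spa\bigl(\mathcal{C}^{0}(S,C),\mathcal{C}^{0}(S,\cO_{C})\bigr)$ is strictly totally disconnected while $\pi_{0}(X)\cong S$ is an arbitrary Stone space; and choosing $Y$ to be a strictly totally disconnected v-cover does not make $\pi_{0}(Y)$ extremally disconnected either. (Requiring $\pi_{0}$ to be extremally disconnected singles out the strictly smaller class of ``w-contractible''-type spaces used elsewhere in \cite{SchEtdia}.) So the projectivity you appeal to is not available, and the v-acyclicity must be argued by a genuine analysis of v-covers of strictly totally disconnected spaces, as in the source. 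A smaller imprecision: your ``stalk localization'' $Y_{x}$ at a general $x\in\lvert X\rvert$ is the generalization set of $x$ inside its connected component, so it is $\Spa(C,C'^{+})$ for some valuation subring $C'^{+}\supseteq C^{+}$ depending on $x$, not literally the component $\Spa(C,C^{+})$; this does not change the strategy but the statement as written is off.
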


\begin{dfn}
Let $X$ be a small v-stack. 
We define $D_{\mathrm{et}}(X,\Lambda)$ as the full subcategory of $D(X_{\mathrm{v}},\Lambda)$ consisting of every object $A$ of $D(X_{\mathrm{v}},\Lambda)$ such that $f^* A \in D(Y_{\mathrm{v}},\Lambda)$ is in the essential image of $D(Y_{\textrm{et}},\Lambda )$ for any strictly totally disconnected $Y \in \Perf_k$ and $f \colon Y \to X$. 
\end{dfn}

If $X$ is a strictly totally disconnected perfectoid space, then $D_{\textrm{et}}(X,\Lambda )\cong D(X_{\textrm{et}},\Lambda )$  as seen from the definition, and we have $D(X_{\textrm{et}},\Lambda )\cong D(\lvert X \rvert,\Lambda )$ by Proposition \ref{prop:stdtopos}. 

\begin{dfn}\label{dfn:pefectconst}
Let $X$ be a small v-stack. 
We say that $A \in D_{\mathrm{et}}(X,\Lambda )$ is perfectly constructible if, for any strictly totally disconnected $Y \in \Perf_k$ and $f \colon Y \to X$, there is a decomposition $\coprod_{i \in I} S_i$ of $\lvert Y \rvert$ as sets by finite families of constructible locally closed sets such that the image of $f^* A$ under $D_{\mathrm{et}}(Y,\Lambda ) \cong D(Y_{\mathrm{et}},\Lambda ) \cong D(\lvert Y \rvert,\Lambda )$ is isomorphic to a complex of constant sheaves associated with a perfect complex of $\Lambda$-modules on each $S_i$.  
\end{dfn}

Assume that there exists a positive integer $n$ prime to $p$ such that $n \Lambda=0$. 
Then there is a six functor formalism for $D_{\mathrm{et}}(-,\Lambda )$ (\cf \cite[\S 17, \S 22, \S 23]{SchEtdia}). 

\begin{dfn}
Let $f \colon X \to S$ a morphism of locally spatial diamonds which is compactifiable (\cf \cite[Definition 22.2]{SchEtdia}) and $\mathrm{dim}. \trg f < \infty$ locally on $S$ (\cf \cite[Definition 21.7]{SchEtdia}). 
We say that $A \in D_{\mathrm{et}}(X,\Lambda )$ is over locally acyclic over $S$ is the following conditions are satisfied: 
\begin{enumerate}
\item 
For any geometric point $\ol{s}$ of $S$, geometric point $\ol{x}$ of $X$ over $\ol{s}$ and geometric point $\ol{t}$ of $S$ that is a generalization of $\ol{s}$, the natural morphism $R\Gamma (X_{\ol{x}} ,A) \to R \Gamma (X_{\ol{x}} \times_{S_{\ol{s}}} S_{\ol{t}},A)$ is an isomorphism. 
\item 
For any separated etale morphism $j \colon U \to X$ such that $f \circ j$ is quasi-compact, $R(f \circ j)_! j^*A \in D_{\mathrm{et}}(S,\Lambda)$ is perfectly constructible. 
\end{enumerate}
\end{dfn}

\begin{dfn}\label{dfn:ula}
Let $f \colon X \to S$ be a morphism of small v-stacks which is compactifiable, $\mathrm{dim}. \trg f < \infty$ locally on $S$ and representable by locally spatial diamonds\footnote{This means that $X \times_S S'$ is a locally spatial diamond for any locally spatial diamond $S'$ and morphism $S' \to S$.}, and let $A \in D_{\mathrm{et}}(X,\Lambda )$. 
We say that $A$ is universally locally acyclic over $S$ if the pullback $A'\in D_{\mathrm{et}}(X \times_S S',\Lambda )$ of $A$ is locally acyclic over $S'$ for any locally spatial diamond $S'$ and $S'\to S$. 
\end{dfn}

If $f \colon X \to S$ is a morphism satisfying the condition in Definition \ref{dfn:ula}, we put 
\[
 A^{\vee}=R\! \sHom_{\Lambda} (A,\Lambda), \quad 
\bD_{X/S} (A) = R\! \sHom_{\Lambda} (A,Rf^! \Lambda) 
\]
for $A \in D_{\mathrm{et}}(X,\Lambda )$, and call them 
the dual of $A$ and the relative Verdier dual of $A$ with respect to $f$, respectively. 
If $A$ is universally locally acyclic over $S$, then so is $\bD_{X/S}(A)$, and we have $A \cong \bD_{X/S}(\bD_{X/S}(A))$ (\cf \cite[Corollary IV.2.25]{FaScGeomLLC}). 

\begin{dfn}\label{def:lcohsm}
Let $f \colon X \to S$ be a morphism of small v-stacks which is compactifiable, $\mathrm{dim}. \trg f < \infty$ locally on $S$ and representable by locally spatial diamonds. 
We say that $f$ is $\ell$-cohomologically smooth if 
$\bF_{\ell} \in D_{\mathrm{et}}(X,\bF_{\ell})$ is universally locally acyclic over $S$ and $Rf^! \bF_{\ell}$ is invertible\footnote{This definition is equivalent to \cite[Definition 23.8]{SchEtdia} by \cite[Proposition IV.2.33]{FaScGeomLLC}.}.
\end{dfn}

\begin{dfn}
A small v-stack $X$ is called an Artin v-stack if it satisfies the following conditions: 
\begin{enumerate}
\item 
The diagonal morphism $\Delta_X \colon X \to X \times X$ is representable by locally spatial diamonds. 
\item 
There are a locally spatial diamond $U$ and an $\ell$-cohomologically smooth surjection $U \to X$. 
\end{enumerate}
\end{dfn}

The notion of $\ell$-cohomologically smooth morphism is also defined for morphisms of Artin v-stacks that are not necessarily representable by locally spatial diamond as follows\footnote{We can see that this definition is consistent with Definition \ref{def:lcohsm} by \cite[Proposition 23.13]{SchEtdia}.}. 

\begin{dfn}
Let $f \colon X \to Y$ be a morphism of Artin v-stacks. 
We say that $f$ is $\ell$-cohomologically smooth if there exist a locally spatial diamond $U$ and an $\ell$-cohomologically smooth surjection $g \colon U \to X$ such that $f \circ g \colon U \to Y$ is $\ell$-cohomologically smooth. 
\end{dfn}

\section{Moduli space of $G$-bundles over the Fargues--Fontaine curve}

Fargues--Fontaine curves are curves introduced in the study of $p$-adic Galois representations in \cite{FaFoCfv}, and are defined as a scheme and as an adic space. 
Here, we define the moduli space of $G$-bundles over Fargues--Fontaine curves using a relative version of Fargues--Fontaine curves as adic spaces.

In the following, $\varphi$ denotes the $q$-th power Frobenius map on a ring of characteristic $p$. 
For an affinoid perfectoid space $S=\Spa (R,R^+)$ over $\Spa (k_F)$, we put 
\[
 Y_S =\left\{ \lvert \cdot \rvert \in \Spa (W_{\cO_F}(R^+)) \setmid 
 \lvert \pi_F[\varpi_R] \rvert \neq 0 \right\}, \quad X_S=Y_S / \varphi^{\bZ}, 
\]
where $\pi_F$ is a uniformizer of $F$, $\varpi_R$ is a topologically nilpotent invertible element of $R$, we equip $W_{\cO_F}(R^+)$ with $(\pi_F,[\varpi_R])$-adic topology and $\varphi$ acts on $Y_S$ via the action on $R^+$. 
They glue together and gives adic spaces $Y_S$ and $X_S$ over $F$ for any $S \in \Perf_{k_F}$, where $X_S$ is called the relative Fargues--Fontaine curve for $S$. 

We put $\Spd F =\Spa (F,\cO_F)^{\diamondsuit}$. 
We define the action of $\varphi$ on $\Spd F$ by 
\[
 (S^{\sharp},(S^{\sharp})^{\flat} \cong S,S^{\sharp} \to \Spa (F,\cO_F) ) \mapsto 
 (S^{\sharp},(S^{\sharp})^{\flat} \cong S \stackrel{\varphi}{\cong} S,S^{\sharp} \to \Spa (F,\cO_F) ). 
\]
Let $d$ be a nonnegative integer, and let $\Sigma_d$ denote the $d$-th symmetric group. 
We put $\Div_{X,k_F}^d =(\Spd F/\varphi^{\bZ})^d/\Sigma_d$. 
As a consequence of Theorem \ref{thm:untpri}, if $S \in \Perf_{k_F}$, we can associate an element of $\Div_{X,k_F}^d(S)$ with a closed Cartier divisor on $X_S$ (\cf \cite[Definition 5.3.7]{ScWeBLp}), and the Cartier divisor thus obtained is called a closed Cartier divisor of degree $d$ on $X_S$\footnote{We use this terminology because a closed Cartier divisor of degree $d$ is written as the sum of $d$ closed Cartier divisors of degree $1$ v-locally on $S$, and a closed Cartier divisor of degree $1$ is determined from a primitive ideal of degree $1$ in $\cO_F$-coefficients.}. 

In the following, let $k$ be the residue field of $\breve{F}$ and $\Div_{X}^d=\Div_{X,k_F}^d \otimes_{k_F} k$. 
As in \cite[Proposition VI.9.2]{FaScGeomLLC}, a local system on $\Div_{X}^d$ corresponds to a continuous representation of $W_F^d$, so $\Div_{X}^1$ plays the role of an algebraic curve in the geometric Langlands correspondence. 

\begin{rem}\label{rem:algvsFF}
When $X$ is an algebraic curve over a field, $X$ itself can be regarded as a moduli space of effective Cartier divisors of degree $1$ on $X$ by considering sections of the structural morphism.
In the case of Fargues--Fontaine curves, for $S \in \Perf_{k_F}$, there is no structural morphism from $X_S$ to $S$, and we cannot consider $S$-valued points in $X_S$, so the situation is very different and $\Div_{X}^1$ appears. 
\end{rem}

Let $b \in G(\breve{F})$. 
For $S \in \Perf_k$, we define the $G$-bundle $\cE_{b,X_S}$ over $X_S$ by $G_{\breve{F}} \times_{\breve{F}} Y_S /(b\sigma \times \varphi )^{\bZ}$. 
We define the v-shaef $\wt{G}_b$ on $\Perf_k$ by $\wt{G}_b(S)=\Aut (\cE_{b,X_S})$. 
We define $\Bun_G \colon \Perf_{k_F} \to \mathrm{Groupoids}$ by 
\[ \Bun_G(S)=(\textrm{the groupoid of $G$-bundles over $X_S$}). \]
Then $\Bun_G$ is an $\ell$-cohomologically smooth Artin v-stack over $\Spd k$ (\cf \cite[Proposition IV. 1.19]{FaScGeomLLC}). 
\begin{thm}[{\cite[Theorem 10]{AnsRedFF}, \cite[Th\'{e}or\`{e}me 5.1]{FarGtor}}]\label{thm:BGbun}
Let $(C,C^+)$ be a Huber pair such that $C$ is an algebraically closed perfectoid field over $k$. 
Then $b \mapsto \cE_{b,X_{\Spa(C,C^+)}}$ induces a bijection $B(G) \stackrel{\sim}{\longrightarrow} \Bun_G (\Spa(C,C^+))/{\sim}$. 
\end{thm}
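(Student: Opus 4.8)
The plan is to prove the bijection $B(G)\xrightarrow{\sim}\Bun_G(\Spa(C,C^+))/{\sim}$ in two stages, first treating the absolute Fargues--Fontaine curve $X_C$ attached to the algebraically closed field $C$, and then deducing the statement for the Huber pair $(C,C^+)$. For the first stage I would invoke the classification of $G$-bundles on the Fargues--Fontaine curve $X_C$, which is the content of \cite{AnsRedFF} (building on the Harder--Narasimhan/slope formalism of \cite{FaFoCfv} for $\GL_n$, i.e.\ the Dieudonn\'e--Manin classification over the FF curve) together with the $G$-equivariant refinement \cite{FarGtor}. Concretely: the map $b\mapsto\cE_{b,X_C}$ factors through $\sigma$-conjugacy, since replacing $b$ by $gb\sigma(g)^{-1}$ for $g\in G(\breve F)$ induces an isomorphism $G_{\breve F}\times Y_C/(b\sigma\times\varphi)^{\bZ}\cong G_{\breve F}\times Y_C/(b'\sigma\times\varphi)^{\bZ}$ by right translation by $g$, so the map $B(G)\to\Bun_G(\Spa C)/{\sim}$ is well defined. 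Surjectivity and injectivity then follow from the cited theorems: every $G$-bundle on $X_C$ is, Zariski-locally after reduction to a Levi and twisting, of the form $\cE_{b}$ for some basic (after modification) $b$, and two such are isomorphic iff the corresponding $b$'s are $\sigma$-conjugate, which is exactly Fargues's theorem \cite[Th\'eor\`eme 5.1]{FarGtor}.

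The main step that is not purely formal is passing from $\Spa(C,\cO_C)$ to $\Spa(C,C^+)$ for a general valuation ring $C^+$ of $C$ (i.e.\ an arbitrary point rather than just the closed point of the positive locus). The key point here is that $Y_{\Spa(C,C^+)}$ differs from $Y_{\Spa(C,\cO_C)}$ only in its ``boundary'' at $\infty$ (where $[\varpi_R]$ is small), and the relevant locus of $X_{\Spa(C,C^+)}$ over which $G$-bundles are classified is insensitive to the choice of $C^+$; more precisely, one uses that the category of vector bundles, hence of $G$-bundles, on $X_{\Spa(C,C^+)}$ is equivalent to that on $X_{\Spa(C,\cO_C)}$ because the difference is supported away from the ``classical points'' that see the Harder--Narasimhan formalism. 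This is the content implicit in \cite[Theorem 10]{AnsRedFF}, which is stated precisely for such $(C,C^+)$. So concretely I would quote Ansch\"utz's theorem directly at this level of generality and not re-derive it.

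I would organize the write-up as: (1) well-definedness of $b\mapsto[\cE_{b,X_C}]$ on $\sigma$-conjugacy classes, a one-line computation with the quotient presentation above; (2) surjectivity, citing \cite[Theorem 10]{AnsRedFF}; (3) injectivity, citing \cite[Th\'eor\`eme 5.1]{FarGtor}, where one also needs that $\Aut$ of $\cE_b$ is computed by $\wt G_b$ and that an isomorphism of bundles over $X_C$ lifts to an equivariant isomorphism of the $\varphi$-modules over $Y_C$ — this lifting is automatic because $X_C=Y_C/\varphi^{\bZ}$ and $Y_C$ is the universal $\varphi$-cover. The hard part is really none of these in isolation: everything of substance is imported from \cite{AnsRedFF} and \cite{FarGtor}. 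The only genuine subtlety to flag is that $B(G)$ here is the full Kottwitz set (not just the basic part), and correspondingly the bundles $\cE_b$ are typically unstable; the classification therefore requires the Harder--Narasimhan reduction to a standard parabolic and the description of the associated graded in terms of the Newton point, which is exactly where the reductive-group input of \cite{FarGtor} (as opposed to the $\GL_n$ case) is used. I expect that step — making precise that the Newton stratification of $\Bun_G$ over a geometric point is indexed by $B(G)$ compatibly with $\kappa_G$ — to be the one requiring the most care, but it too is available in the cited references and need only be quoted.
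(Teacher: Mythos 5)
The paper does not give a proof of Theorem \ref{thm:BGbun}; it is stated purely as a citation to \cite[Theorem 10]{AnsRedFF} and \cite[Th\'{e}or\`{e}me 5.1]{FarGtor}, so there is no in-paper argument to compare yours against. Your proposal — checking well-definedness on $\sigma$-conjugacy classes by translating the quotient presentation of $\cE_b$, and then importing surjectivity and injectivity wholesale from the two cited results — is exactly what one would do, and your remarks on what the cited theorems actually contain (the reduction to Levis and the Newton/HN formalism in Fargues, the general-$C^+$ statement in Ansch\"utz) are accurate.

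Two small points worth tightening. First, in your well-definedness check the isomorphism $\cE_b\cong\cE_{gb\sigma(g)^{-1}}$ should be induced by \emph{left} translation by $g$ on $G_{\breve F}\times Y_C$ (so that it commutes with the right $G$-torsor action and intertwines $b\sigma\times\varphi$ with $b'\sigma\times\varphi$); ``right translation'' as written would not be $G$-equivariant in the usual convention. Second, your description of the classification argument (``Zariski-locally after reduction to a Levi and twisting, of the form $\cE_b$ for some basic (after modification) $b$'') conflates a few steps; what one actually uses is the Harder--Narasimhan reduction to a parabolic, the fact that semistable $G$-bundles of a given slope are classified by basic elements, and the splitting of the HN filtration — but since you are quoting rather than reproving this, the imprecision is harmless. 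The observation that $\Bun_G(\Spa(C,C^+))$ is insensitive to the choice of $C^+$, which is built into the cited Theorem 10 of \cite{AnsRedFF}, is the right thing to flag.
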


By the bijection in Theorem \ref{thm:BGbun}, we have a bijection $B(G) \stackrel{\sim}{\longrightarrow} \lvert \Bun_G \rvert$. 
For $[b] \in B(G)$, we put 
\[
 \Bun_G^{[b]}=\Bun_G \times_{\ul{\lvert \Bun_G \rvert}} \ul{\{[b]\}}
\]
using the morphism of \eqref{eq:vsttop}. 
Let $*=\Spd k$. 
By \cite[Theorem III.0.2]{FaScGeomLLC}, the natural morphism $i^{[b]} \colon \Bun_G^{[b]} \to \Bun_G$ is a locally closed immersion and we have 
\begin{equation}\label{eq:Bunquot}
\Bun_G^{[b]} \cong [*/\wt{G}_b] . 
\end{equation}
There exists a natural isomorphism $\pi_0 (\lvert \wt{G}_b \rvert) \cong G_b(F)$, which and \eqref{eq:vsttop} give 
\begin{equation}\label{eq:wtGbul}
\wt{G}_b \to \ul{\lvert \wt{G}_b \rvert} \to \ul{G_b(F)}. 
\end{equation}
\eqref{eq:wtGbul} is a surjection, and is an isomorphism if $b$ is a basic element (\cf \cite[Proposition III.5.1]{FaScGeomLLC}). 
By \eqref{eq:Bunquot} and \eqref{eq:wtGbul}, there is a natural morphism 
\begin{equation}\label{eq:Buntoquot}
 \Bun_G^{[b]} \to [*/\ul{G_b(F)}]. 
\end{equation}

\section{Geometric Satake equivalence}
For a perfectoid space $S$ over $\Div_{X}^d$, we write $D_S$ for the corresponding closed Cartier divisor on $X_S$, and $\cI_S$ for its invertible ideal sheaf. 
By shrinking $S$, we can make $D_S$ an affinoid adic space. 
For an affinoid perfectoid space $S$ over $\Div_{X}^d$ such that 
$D_S$ is an affinoid adic space, we put 
\begin{align*}
B_{\Div_X^d}^+ (S)&= \{ \textrm{the global sections of the completion of $\cO_{X_S}$ along $\cI_S$}\}, \\ 
B_{\Div_X^d} (S)&=B_{\Div_X^d}^+ (S) \left[ \frac{1}{I_S} \right] , 
\end{align*}
where $I_S$ denotes the ideal of $B_{\Div_X^d}^+ (S)$ determined by $\cI_S$. 
By this, we obtain v-shaeves $B_{\Div_X^d}^+$ and $B_{\Div_X^d}$ over $\Div_X^d$. 
For an affinoid perfectoid space $S$ over $\Div_{X}^d$ such that 
$D_S$ is an affinoid adic space, 
we define $\cHck_{G,\Div_X^d} (S)$ by 
\begin{align*}
\{ &\textrm{the groupoid of $G$-bundles $\cE_1$, $\cE_2$ over $B_{\Div_X^d}^+(S)$} \\ &\textrm{and an isomorphism between $\cE_1$ and $\cE_2$ over $B_{\Div_X^d} (S)$}\}, 
\end{align*}
and $\Gr_{G,\Div_X^d} (S)$ by 
\[
\{ \textrm{the groupoid of a $G$-bundle $\cE$ over $B_{\Div_X^d}^+(S)$ and a trivialization of  $\cE$ over $B_{\Div_X^d} (S)$}\}. 
\]
By this, we obtaine small v-stacks $\cHck_{G,\Div_X^d}$ and $\Gr_{G,\Div_X^d}$ over $\Div_X^d$, 
and $\Gr_{G,\Div_X^d}$ is equivalent to the v-shaef obtiane by taking isomorphism classes (\cf \cite[Proposition VI.1.7, Proposition VI.1.9]{FaScGeomLLC}). 
We call $\cHck_{G,\Div_X^d}$ a local Hecke stack, and $\Gr_{G,\Div_X^d}$ a Beilinson--Drinfeld Grassmann variety. 
By definition, we have a natual morphism $\Gr_{G,\Div_X^d} \to \cHck_{G,\Div_X^d}$. 
By interchanging $\cE_1$ and $\cE_2$ that appear in the definition of $\cHck_{G,\Div_X^d}$, 
we obtain an isomorphism 
\begin{equation}\label{eq:swHck}
 \mathrm{sw} \colon \cHck_{G,\Div_X^d} \stackrel{\sim}{\lra}
 \cHck_{G,\Div_X^d} . 
\end{equation}
For a small v-stack $S$ over $\Div_X^d$, we put 
\[
 \Gr_{G,S/\Div_X^d} = \Gr_{G,\Div_X^d} \times_{\Div_X^d} S, \quad 
 \cHck_{G,S/\Div_X^d} =\cHck_{G,\Div_X^d} \times_{\Div_X^d} S, 
\]
and define v-shaeves $L^+_S G$, $L_S G$ over $S$ by putting 
\[
 (L^+_S G)(S')=G(B^+_{\Div_X^d} (S')), \quad 
 (L_S G)(S')=G(B_{\Div_X^d} (S'))
\]
for a perfectoid space $S'$ over $S$. 

\begin{rem}
Let $F'$ be a finite subextension of $F^{\mathrm{sep}}/F$. 
For $S \in \Perf_k$, 
we have $X'_S \cong X_S \otimes_F F'$ writing $X'_S$ for the relative Fargues--Fontaine curve for $S$ determined from $F'$. 
Let $\Div_{X'}^d$ be the v-shaef of closed Cartier divisors of degree $d$ determined from $F'$.
For an affinoid perfectoid space $S$ over $\Div_{X'}^d$ such that $D_S$ is an affinoid adic space, the rings $B_{\Div_X^d}^+ (S)$ over $F$ and $B_{\Div_{X'}^d}^+ (S)$ over $F'$ are isomorphic as rings over $F$, so there is a correspondence between $G$-bundles over $B_{\Div_X^d}^+ (S)$ and $G_{F'}$-bundles over $B_{\Div_{X'}^d}^+ (S)$. 
Thus $\cHck_{G,S/\Div_X^d} \cong \cHck_{G_{F'},S/\Div_{X'}^d}$ for a small v-stack $S$ over $\Div_{X'}^d$.
When we consider v-locally on $S$ or geometric pointwisely, we can use this isomorphism to reduce it to the case where $G$ splits. 
\end{rem}

Let $(\bX ,\Phi,\Delta, \check{\bX} ,\check{\Phi},\check{\Delta})=\mathrm{BR} (G_{F^{\mathrm{sep}}})$. We put 
\[
 \check{\bX}^+ =\{ \mu \in \check{\bX} \mid \textrm{$\langle \mu , \alpha \rangle \geq 0$ for any $\alpha \in \Delta$} \}. 
\]
We assume that $G$ is split for a while. 
For a geometric point $\ol{s}$ of $\Div_X^d$, we have 
\begin{equation}\label{eq:Hckdc}
	\cHck_{G,\Div_X^d} (\ol{s}) \cong \left[ \bigl( L^+_{\Div_X^d} G \bigr) (\ol{s}) \middle\backslash \bigl( L_{\Div_X^d} G \bigr) (\ol{s}) \middle/ \bigl( L^+_{\Div_X^d} G \bigr) (\ol{s}) \right]
\end{equation} 
(\cf \cite[Proposition VI.1.7]{FaScGeomLLC}), 
where the right hand side of \eqref{eq:Hckdc} is the groupoid whose objects are $\bigl( L_{\Div_X^d} G \bigr) (\ol{s})$ and morphisms from $g$ to $g'$ for $g,g' \in \bigl( L_{\Div_X^d} G \bigr) (\ol{s})$ are $(g_1,g_2 )\in \bigl( L^+_{\Div_X^d} G \bigr) (\ol{s})^2$ such that $g_1 g g_2^{-1}=g'$. 
For a geometric point $\ol{s}$ of $\Div_X^d$, we define 
\begin{equation}\label{eq:sidi}
 ( \ol{s}_i )_{1 \leq i \leq r}, \quad 
 ( d_i )_{1 \leq i \leq r} 
\end{equation}
by letting $\ol{s}_1, \ldots, \ol{s}_r$ be the different points in $d$ geometric point of $\Div_X^1$ given by $\ol{s}$, and $d_i$ be the multiplicity of $\ol{s}_i$ for $1 \leq i \leq r$. 
Then we have 
\[
 B_{\Div_X^d}^+(\ol{s}) \cong \prod_{i=1}^r B_{\Div_X^1}^+(\ol{s}_i), \quad 
 B_{\Div_X^d}(\ol{s}) \cong \prod_{i=1}^r B_{\Div_X^1}(\ol{s}_i) , 
\]
and $B_{\Div_X^1}^+(\ol{s}_i)$ is a complete discrete valuation ring, and $B_{\Div_X^1}(\ol{s}_i)$ is its quotient field for $1 \leq i \leq r$. 
Hence we obtain a bijection 
\begin{equation}\label{eq:HckXbij}
 \cHck_{G,\Div_X^d} (\ol{s})/{\sim} \cong (\check{\bX}^+)^r 
\end{equation}
by \eqref{eq:Hckdc} and the Cartan decomposition. 
For $\mu,\mu' \in \check{\bX}$, if $\mu -\mu' \in \bZ_{\geq 0} \cdot \check{\Delta}$, we say that $\mu'$ is less than or equal to $\mu$, and write $\mu' \leq \mu$. 
If $\mu' \leq \mu$ and $\mu' \neq \mu$, we write $\mu' < \mu$. 

\begin{dfn}\label{dfn:Hckmu}
Let $J$ be a set of $d$ elements, and $\mu_{\bullet}=(\mu_j)_{j \in J}$ with $\mu_j \in \check{\bX}^+$. 
Then, for $S \in \Perf_k$, we define $\cHck_{G,\Div_X^d,\leq \mu_{\bullet}}(S) \subset \cHck_{G,\Div_X^d}(S)$ by the following condition: 

For any geometric point $\ol{s}$ of $S$, if we define $( \ol{s}_i )_{1 \leq i \leq r}$ and $( d_i )_{1 \leq i \leq r}$
from $\ol{s} \to S \to \cHck_{G,\Div_X^d} \to \Div_X^d$ as \eqref{eq:sidi}, there is a decomposition $\coprod_{i=1}^r J_i$ of $J$ such that $\lvert J_i \rvert =d_i$ and the element of 
$(\check{\bX}^+)^r$ corresponding to $\ol{s} \to S \to \cHck_{G,\Div_X^d}$ under \eqref{eq:HckXbij} is less than or equal to $(\sum_{j \in J_i} \mu_j)_{1 \leq i \leq r}$ componentwisely. 
\end{dfn}

As \cite[Proposition VI.2.7]{FaScGeomLLC}, 
$\cHck_{G,\Div_X^d,\leq \mu_{\bullet}} \subset \cHck_{G,\Div_X^d}$ 
defined in Definition \ref{dfn:Hckmu} is a closed substack. 
For a small v-stack $S$ over $\Div_X^d$, we put 
\[
 \cHck_{G,S/\Div_X^d,\leq \mu_{\bullet}} =\cHck_{G,\Div_X^d,\leq \mu_{\bullet}} \times_{\Div_X^d} S . 
\]
For $\mu \in \check{\bX}^+$ and a small v-stack $S$ over $\Div_X^1$, we put 
\[
 \cHck_{G,S/\Div_X^1,\mu}= 
  \cHck_{G,S/\Div_X^1,\leq \mu} \setminus 
  \bigcup_{\mu' < \mu} \cHck_{G,S/\Div_X^1,\leq \mu'}. 
\]
For a geometric point $\ol{s}$ of $\Div_X^d$, 
we define $( \ol{s}_i )_{1 \leq i \leq r}$ and $( d_i )_{1 \leq i \leq r}$ as \eqref{eq:sidi}. 
Then, for 
$( \mu_i )_{1 \leq i \leq r} \in (\check{\bX}^+)^r$, 
the locally closed substack of $\cHck_{G,\ol{s}/\Div_X^d}$ determined by 
\[
 \prod_{i=1}^r \cHck_{G,\ol{s}_i/\Div_X^1,\mu_i} \subset 
 \prod_{i=1}^r \cHck_{G,\ol{s}_i/\Div_X^1} \cong 
 \cHck_{G,\ol{s}/\Div_X^d}
\]
is called the Schubert cell corresponding to $( \mu_i )_{1 \leq i \leq r}$. 

Let $G$ again be a general connected reductive algebraic group over $F$. 
Let $F'$ be a finite subextension of $F^{\mathrm{sep}}/F$ such that $G_{F'}$ is split\footnote{The t-structure of Definition \ref{dfn:Hckbdd} or Proposition \ref{prop:tstr} does not depend on the choice of $F'$.} and let $\Div_{X'}^d$ be the v-shaef of closed Cartier divisors of degree $d$ determined from $F'$. 
Let $S$ be a small v-stack over $\Div_X^d$. We put $S'=S \times_{\Div_X^d} \Div_{X'}^d$. 

\begin{dfn}\label{dfn:Hckbdd}
If the pullback of $A \in D_{\mathrm{et}} (\cHck_{G,S/\Div_X^d},\Lambda )$ to $D_{\mathrm{et}} (\cHck_{G,S'/\Div_X^d},\Lambda ) \cong D_{\mathrm{et}} (\cHck_{G_{F'},S'/\Div_{X'}^d},\Lambda )$ is obtained as the pushforward from the union of $\cHck_{G_{F'},S'/\Div_{X'}^d,\leq \mu_{\bullet}}$ for a finite number of $\mu_{\bullet} \in \check{\bX}^+$, we say that $A$ is bounded. 
\end{dfn}

Let $D_{\mathrm{et}} (\cHck_{G,S/\Div_X^d},\Lambda )^{\mathrm{bd}}$ be the full subcategory of bounded objects of $D_{\mathrm{et}} (\cHck_{G,S/\Div_X^d},\Lambda )$. 

\begin{dfn}\label{dfn:HckULA}
If $A \in D_{\mathrm{et}} (\cHck_{G,S/\Div_X^d},\Lambda )$ is bounded, and its pullback to $\Gr_{G,S/\Div_X^d}$ is universally locally acyclic over $S$, we say that $A$ is universally locally acyclic. 
\end{dfn}

For $A \in D_{\mathrm{et}} (\cHck_{G,S/\Div_X^d},\Lambda )$, 
the condition that $A$ is universally locally acyclic is equivalent to that $\mathrm{sw}^* A$ is universally locally acyclic (\cf \cite[Proposition 6.6.2]{FaScGeomLLC}). 

Let $D_{\mathrm{et}}^{\mathrm{ULA}} (\cHck_{G,S/\Div_X^d},\Lambda )$ denote the full subcategory of universally locally acyclic objects of $D_{\mathrm{et}} (\cHck_{G,S/\Div_X^d},\Lambda )$. 
We put $\rho=\frac{1}{2} \sum_{\alpha \in \Phi^+} \alpha$, where $\Phi^+$ denotes the set of positive roots of $\Phi$. 

\begin{prop}[{\cite[Proposition VI.7.1]{FaScGeomLLC}}]\label{prop:tstr}
There exists a unique t-structure $({}^p D^{\leq 0},{}^p D^{\geq 0})$ of $D_{\mathrm{et}} (\cHck_{G,S/\Div_X^d},\Lambda )^{\mathrm{bd}}$ such that $A \in D_{\mathrm{et}} (\cHck_{G,S/\Div_X^d},\Lambda )^{\mathrm{bd}}$ belongs to ${}^p D^{\leq 0}$ if and only if the following condition is satisfied: 

For any geometric point $\ol{s}$ of $S'$, 
if we define a sequence $( \ol{s}_i )_{1 \leq i \leq r}$ of different geometric points and $( d_i )_{1 \leq i \leq r}$ from $\ol{s} \to S' \to \Div_{X'}^d$ as \eqref{eq:sidi}, 
the cohomological dimension of the pullback of $A$ to the Schubert cell of $\cHck_{G,\ol{s}/\Div_X^d} \cong \cHck_{G_{F'},\ol{s}/\Div_{X'}^d}$ corresponding to arbitrary $( \mu_i )_{1 \leq i \leq r} \in (\check{\bX}^+)^r$ is less than or equal to $\sum_{i=1}^r \langle 2\rho, \mu_i \rangle$. 
\end{prop}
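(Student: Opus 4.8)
\emph{Uniqueness} is immediate, since a t-structure is determined by its aisle ${}^pD^{\le 0}$ and the statement prescribes that aisle; only existence requires work. The defining condition is tested after pullback along $S'=S\times_{\Div_X^d}\Div_{X'}^d\to S$, via the equivalence $D_{\mathrm{et}}(\cHck_{G,S'/\Div_X^d},\Lambda)\cong D_{\mathrm{et}}(\cHck_{G_{F'},S'/\Div_{X'}^d},\Lambda)$ of Definition \ref{dfn:Hckbdd}, and this pullback is conservative; so the plan is to construct the t-structure in the split case and pull its aisle back. Independence of the auxiliary $F'$ will hold because enlarging $F'$ only refines $S'$ and its geometric points while leaving the numerical bound $\sum_i\langle 2\rho,\mu_i\rangle$ unchanged: the residual Galois action is by pinned automorphisms, hence preserves $\rho$ and the dominant cone $\check{\bX}^+$ and merely permutes the points $\ol{s}_i$ together with their coweights. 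So assume $G$ is split.

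By Definition \ref{dfn:Hckbdd} the category $D_{\mathrm{et}}(\cHck_{G,S/\Div_X^d},\Lambda)^{\mathrm{bd}}$ is the union of the full subcategories of objects pushed forward from the closed substacks $\cHck_{G,S/\Div_X^d,\le\mu_\bullet}$ (closed by \cite[Proposition VI.2.7]{FaScGeomLLC}); for $\mu_\bullet\le\mu'_\bullet$ the pushforward along the closed immersion is fully faithful and, for the t-structures to be built, t-exact, and a filtered colimit of t-structures along such inclusions is again a t-structure. So it suffices to equip each $D_{\mathrm{et}}(\cHck_{G,S/\Div_X^d,\le\mu_\bullet},\Lambda)$ with a t-structure having the prescribed aisle, compatibly in $\mu_\bullet$. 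Here is the subtlety: the Schubert stratification of $\cHck_{\le\mu_\bullet}$ --- whose strata are the products $\prod_i\cHck_{G,\ol{s}_i/\Div_X^1,\mu_i}$ of Schubert cells, ordered by $\le$ coweight-by-coweight, which is exactly what Definition \ref{dfn:Hckmu} encodes --- only exists after pullback to a geometric point of $S'$, or v-locally on $S'$. Hence the candidate aisle is defined by a \emph{fibrewise} condition, and the heart of the matter is to show it is the aisle of an honest t-structure on the whole of $D_{\mathrm{et}}(\cHck_{G,S/\Div_X^d,\le\mu_\bullet},\Lambda)$.

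On a single geometric fibre $\cHck_{G,\ol{s}/\Div_X^d}$ one is in the classical situation: a finite union of Schubert cells with proper closures, stratified as above. I would produce the fibrewise perverse t-structure by the usual dévissage, using that the Schubert cell $\Gr_{G,\ol{s}_i/\Div_X^1,\mu_i}$ is $\ell$-cohomologically smooth of $\ell$-dimension $\langle 2\rho,\mu_i\rangle$ and, in the $L^+G$-equivariant description, an iterated affine-space fibration over a partial flag variety of $G$ (\cite[\S VI]{FaScGeomLLC}), so that the top-stratum t-structure is just the shift by $\sum_i\langle 2\rho,\mu_i\rangle$ of the standard one; Beilinson gluing along the finitely many strata, through the six-functor recollement of \cite[\S\S 17, 22, 23]{SchEtdia} (applicable since $n\Lambda=0$ for some $n$ prime to $p$) extended to Artin v-stacks as in \cite[\S IV]{FaScGeomLLC}, then yields the fibrewise t-structure, whose aisle is precisely ``the stated bound holds on every Schubert cell.'' To globalise over $S$, i.e.\ to see that the fibrewise perverse truncations spread out to functors on $D_{\mathrm{et}}(\cHck_{G,S/\Div_X^d,\le\mu_\bullet},\Lambda)$, I would invoke properness of $\cHck_{\le\mu_\bullet}$ and of its Schubert-closure strata over $\Div_X^d$, the uniform-in-$S$ control of cohomological amplitude afforded by universally locally acyclic and perfectly constructible sheaves (Definitions \ref{dfn:HckULA}, \ref{dfn:pefectconst}), and compatibility of $Rf^!$ and of $\ell$-cohomological smoothness with base change; this is exactly the relative-perverse-t-structure formalism. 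Compatibility across the $\mu_\bullet$, and with the reduction of the first paragraph, is then formal.

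The main obstacle is this last globalisation step --- turning a fibrewise-defined support condition into an actual t-structure over $S$ by spreading out the truncations --- together with the geometric facts it rests on: the dimension theory and properness of Schubert cells and their closures over $\Div_X^1$, and the stability of these under the proper maps $\Gr_{G,\le\mu_\bullet}\to\Div_X^d$ and the $\ell$-cohomologically smooth quotients $\Gr_{G,\Div_X^d}\to\cHck_{G,\Div_X^d}$ (\cf Definition \ref{def:lcohsm}). Once the relative perverse t-structure is in place on each bounded piece with aisle cut out fibrewise by $\sum_i\langle 2\rho,\mu_i\rangle$, the colimit over $\mu_\bullet$ and the pullback along $S\to\Div_X^d$ produce the asserted t-structure and identify its aisle with the one in the statement.
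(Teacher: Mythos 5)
The high-level architecture you set out --- reduce to the split case, then to the bounded pieces $\cHck_{\leq\mu_\bullet}$, build the t-structure by gluing along the Schubert stratification over a geometric point, then spread out over $S$ --- is the right skeleton, and you correctly isolate the delicate point: the finer-than-$\leq\mu_\bullet$ stratification by products of Schubert cells is \emph{not} a global stratification of $\cHck_{G,S/\Div_X^d,\leq\mu_\bullet}$ when $d>1$, because cells merge as points of the divisor collide. Uniqueness from the prescribed aisle, the colimit over $\mu_\bullet$, the t-exactness of pushforward along $\cHck_{\leq\mu_\bullet}\hookrightarrow\cHck_{\leq\mu'_\bullet}$, and the Galois-equivariance of the bound $\sum_i\langle 2\rho,\mu_i\rangle$ are all fine.

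Where the argument as written does not close is precisely the globalisation step, and the tools you reach for there do not do the job. First, the proposition asserts a t-structure on the full bounded category $D_{\mathrm{et}}(\cHck_{G,S/\Div_X^d},\Lambda)^{\mathrm{bd}}$, with no constructibility or universal local acyclicity hypothesis; invoking Definitions \ref{dfn:HckULA} and \ref{dfn:pefectconst} to get ``uniform-in-$S$ control of cohomological amplitude'' therefore cannot be the mechanism, since those definitions cut out a much smaller subcategory than the one you need a t-structure on. Second, ``this is exactly the relative-perverse-t-structure formalism'' is not an argument: that formalism (in the scheme setting) hinges on specific structural results --- that relative perverse truncations of constructible complexes over a base commute with passage to geometric fibres, and that the right orthogonal of the fibrewise-defined aisle is again given by a fibrewise co-aisle condition --- and it is precisely the analogue of those results over a small v-stack base that needs to be established, not cited. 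The actual existence proof has to exhibit the truncation functors; a common way to do this is to observe that the aisle is stable under colimits and is generated under colimits and extensions by objects of the form $j_!\Lambda$ shifted by the $\ell$-dimension of a Schubert stratum --- objects which do exist v-locally on $S$ but whose pushforwards one must show to generate globally --- and then identify the right orthogonal; either way the Hom-vanishing computation reduces to, and must be reduced to, a fibrewise Ext calculation, and you have not made that reduction.

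A smaller but real gap: pulling an aisle back along the surjection $\cHck_{G,S'/\Div_X^d}\to\cHck_{G,S/\Div_X^d}$ and invoking conservativity tells you when an object lies in the candidate aisle, but it does not by itself produce truncation functors downstairs; some descent of t-structures along the $\Gal(F'/F)$-cover is needed, and you should at least note that the truncations upstairs are $\Gal(F'/F)$-equivariant (they commute with the automorphisms of $S'/S$ because the perversity bound is Galois-invariant, as you observe), and that this equivariance is what lets them descend. As written, ``construct in the split case and pull its aisle back'' skips this.
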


We define a t-structure $({}^p D^{\leq 0},{}^p D^{\geq 0})$
of $D_{\mathrm{et}} (\cHck_{G,S/\Div_X^d},\Lambda )^{\mathrm{bd}}$ as Proposition \ref{prop:tstr}, and put 
\begin{equation*}
\mathrm{Perv}(\cHck_{G,S/\Div_X^d},\Lambda )=
{}^p D^{\leq 0} \cap {}^p D^{\geq 0} \subset 
D_{\mathrm{et}} (\cHck_{G,S/\Div_X^d},\Lambda )^{\mathrm{bd}}.  
\end{equation*}
An object of $\mathrm{Perv}(\cHck_{G,S/\Div_X^d},\Lambda )$ is called a perverse sheaf. 

\begin{dfn}
We say that 
$A \in D_{\mathrm{et}} (\cHck_{G,S/\Div_X^d},\Lambda )$ 
is flat perverse if $A \otimes_{\Lambda}^{\bL} M$ is a perverse sheaf for any $\Lambda$-module $M$. 
\end{dfn}

\begin{dfn}
We write $\mathrm{Sat}(\cHck_{G,S/\Div_X^d},\Lambda )$ for the full subcategory of universally locally acyclic, flat perverse objects of $D_{\mathrm{et}} (\cHck_{G,S/\Div_X^d},\Lambda )$, and call it the Satake category. 
\end{dfn}

For a set $I$ of $d$ elements, we put 
\[
 \cHck_G^I=\cHck_{G,(\Div_X^1)^I/\Div_X^d}, \quad 
 \Gr_G^I=\Gr_{G,(\Div_X^1)^I/\Div_X^d}, \quad 
 \mathrm{Sat}_G^I(\Lambda)=\mathrm{Sat}(\cHck_G^I,\Lambda) .
\]
Let $\coprod_{i=1}^m I_i$ be a decomposition of a set $I$ of $d$ elements. In the following, we construct 
\begin{equation}\label{eq:satprod}
 \mathrm{Sat}_G^{I_1}(\Lambda) \times 
 \cdots \times \mathrm{Sat}_G^{I_m}(\Lambda) \to 
 \mathrm{Sat}_G^I(\Lambda) . 
\end{equation}
Let 
$(\Div_X^1)^{I;I_1,\ldots,I_m} \subset (\Div_X^1)^I$ 
be the closed sub-v-shaef defined by the condition that the $i$-th component and the $i'$-th component are different if 
$1 \leq j < j' \leq m$, $i \in I_j$ and $i' \in I_{j'}$. We put 
\[
\mathrm{Sat}_G^{I;I_1,\ldots,I_m}(\Lambda) =\mathrm{Sat}\left( \cHck_{G,(\Div_X^1)^{I;I_1,\ldots,I_m}/\Div_X^d},\Lambda \right) . 
\]
Since we have an isomorphism 
\[
 \cHck_G^I \times_{(\Div_X^1)^I} (\Div_X^1)^{I;I_1,\ldots,I_m} 
 \cong \prod_{j=1}^m \cHck_G^{I_j} \times_{\prod_{j=1}^m (\Div_X^1)^{I_j}} (\Div_X^1)^{I;I_1,\ldots,I_m} 
\]
over $(\Div_X^1)^{I;I_1,\ldots,I_m}$, we obtain 
\begin{equation}\label{eq:satprodop}
 \mathrm{Sat}_G^{I_1}(\Lambda) \times 
 \cdots \times \mathrm{Sat}_G^{I_m}(\Lambda) \to 
 \mathrm{Sat}_G^{I;I_1,\ldots,I_m}(\Lambda) 
\end{equation}
by exterior tensor products. 
By \cite[Proposition VI.9.3]{FaScGeomLLC}, the restriction functor $\mathrm{Sat}_G^I(\Lambda) \to \mathrm{Sat}_G^{I;I_1,\ldots,I_m}(\Lambda)$ is fully faithful. Furthermore, \eqref{eq:satprodop} factors through  $\mathrm{Sat}_G^I(\Lambda)$ and gives 
\eqref{eq:satprod} (\cf \cite[Proposition VI.9.4]{FaScGeomLLC}). In particular, we have 
\begin{equation}\label{eq:prodII}
\mathrm{Sat}_G^I(\Lambda) \times \mathrm{Sat}_G^I(\Lambda) \to \mathrm{Sat}_G^{I \sqcup I}(\Lambda). 
\end{equation}
The morphism 
\[
\cHck_G^I \cong \cHck_G^{I \sqcup I} \times_{(\Div_X)^{I \sqcup I}} (\Div_X)^I \to \cHck_G^{I \sqcup I}
\]
obtained from the natural surjection 
$I \sqcup I \to I$ gives $\mathrm{Sat}_G^{I \sqcup I}(\Lambda) \to \mathrm{Sat}_G^I(\Lambda)$. Composing it with \eqref{eq:prodII}, we obtain the fusion product 
\[
 * \colon \mathrm{Sat}_G^I(\Lambda) \times \mathrm{Sat}_G^I(\Lambda) \to \mathrm{Sat}_G^I(\Lambda) . 
\]
Let $A_1,A_2 \in \mathrm{Sat}_G^I(\Lambda)$. 
By the isomorphism swapping the $1$-st and $2$-nd components of $(\Div_X^1)^{I \sqcup I} =(\Div_X^1)^I \times (\Div_X^1)^I$, we obtain $A_1 * A_2 \cong A_2 * A_1$. 
We have a decomposition $\cHck_G^I=(\cHck_G^I)^{\mathrm{even}} \amalg (\cHck_G^I)^{\mathrm{odd}}$ into open and closed substacks 
according to the parity of the dimension of the Schubert cell contained in the base change to a geometric point of $(\Div_X^1)^I$. 
Using this decomposition, we decompose as $A_1=A_1^{\mathrm{even}}\oplus A_1^{\mathrm{odd}}$,  $A_2=A_2^{\mathrm{even}}\oplus A_2^{\mathrm{odd}}$, and define  
$c_{A_1,A_2} \colon A_1 * A_2 \cong A_2 * A_1$  
modifying $A_1 * A_2 \cong A_2 * A_1$ by the $-1$ multiplication on the decomposition factor $A_1^{\mathrm{odd}} * A_2^{\mathrm{odd}} \cong A_2^{\mathrm{odd}} * A_1^{\mathrm{odd}}$. 
By this, $\mathrm{Sat}_G^I(\Lambda)$ has a symmetric monoidal structure. 
For a group scheme $\mathcal{G}$ over a commutative ring $R$, let $\Rep_{R} (\mathcal{G})$ denote the category of finite projective $R$-modules with action of $\mathcal{G}$. 

\begin{thm}[{\cite[Theorem I.6.3]{FaScGeomLLC}}]\label{thm:geomS}
We fix $c \in \Lambda$ such that $c^2=q$. 
Then there is an equivalence 
\[
 \cS \colon \Rep_{\Lambda} ({}^L G_{\Lambda}^I) \stackrel{\sim}{\lra} 
 \mathrm{Sat}_G^I(\Lambda) 
\]
of symmetric monoidal categories, called the geometric Satake equivalence. 
\end{thm}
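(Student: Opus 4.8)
The plan is to reconstruct $\mathrm{Sat}_G^I(\Lambda)$ by Tannakian duality from a suitable fibre functor and then to identify the resulting Tannakian group with $({}^L G_\Lambda)^I = (\wh G \rtimes \cW_F)^I$. First I would construct the fibre functor. Let $f_I \colon \cHck_G^I \to (\Div_X^1)^I$ be the structure morphism; for $A \in \mathrm{Sat}_G^I(\Lambda)$, which by boundedness is supported on a closed substack proper over $(\Div_X^1)^I$, put $F_I(A) = Rf_{I*}A \cong Rf_{I!}A \in D_{\mathrm{et}}((\Div_X^1)^I,\Lambda)$. The universal local acyclicity of $A$ (Definition \ref{dfn:HckULA}) together with the dimension and parity estimates $\sum_i \langle 2\rho,\mu_i\rangle$ on the Schubert cells defining the perverse $t$-structure of Proposition \ref{prop:tstr} forces $F_I(A)$ to be a $\Lambda$-local system on $(\Div_X^1)^I$ placed in degree $0$; by the identification of local systems on $\Div_X^1$ with continuous $W_F$-representations (\cite[Proposition VI.9.2]{FaScGeomLLC}), $F_I$ then takes values in finite projective $\Lambda$-modules equipped with a continuous action of $W_F^I$. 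One checks that $F_I$ is faithful and exact --- exactness is precisely where the ``semismallness of convolution'' encoded in the $t$-structure enters --- and that it is symmetric monoidal for the fusion product with the sign-corrected commutativity constraint $c_{A_1,A_2}$, the sign being what makes $F_I$ symmetric rather than merely braided, exactly as in classical geometric Satake.

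Next I would apply Tannaka duality to $(\mathrm{Sat}_G^I(\Lambda),F_I)$. For this one needs $\mathrm{Sat}_G^I(\Lambda)$ to be an abelian, indeed Tannakian, category over $\Lambda$; the flat perverse condition makes this work integrally, ensuring that $-\otimes_\Lambda^{\bL}M$ is exact and that $F_I$ detects exactness, whence $\mathrm{Sat}_G^I(\Lambda) \cong \Rep_\Lambda(\mathcal{H}_I)$ for a flat affine group scheme $\mathcal{H}_I$ over $\Lambda$ sitting in an extension $1 \to \mathcal{N}_I \to \mathcal{H}_I \to \cW_F^I \to 1$ induced by the $W_F^I$-structure on $F_I$. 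It remains to identify $\mathcal{N}_I$ with $\wh G^I$ and this extension with the split one, so that $\mathcal{H}_I = ({}^L G_\Lambda)^I$. I would reduce to $|I| = 1$: the fusion product together with the full faithfulness of restriction to the pairwise-distinct locus (\cite[Proposition VI.9.3, Proposition VI.9.4]{FaScGeomLLC}) exhibits $\mathrm{Sat}_G^I$ as the $I$-fold fusion power of $\mathrm{Sat}_G^1$, compatibly with $({}^L G)^I = {}^L G \times \cdots \times {}^L G$, a nearby-cycles argument controlling the degeneration as the points of $(\Div_X^1)^I$ collide.

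For $|I| = 1$ with $G$ split I would work at a geometric point $\ol s$ of $\Div_X^1$, where the Beilinson--Drinfeld Grassmannian $\Gr_{G,\ol s/\Div_X^1}$ is the $B_{\mathrm{dR}}^+$-affine Grassmannian of $G$, for which geometric Satake is available by comparison via Beauville--Laszlo with the usual affine Grassmannian (Zhu; Scholze--Weinstein; and the classical theorem of Mirkovi\'c--Vilonen, Ginzburg, Richarz): this identifies $\mathcal{N}$ with $\wh G$, its maximal torus coming from the Mirkovi\'c--Vilonen weight functors and its based root datum pinned down by rank-one computations on minuscule Schubert varieties (copies of $\bP^1$, respectively partial flag varieties), yielding exactly the dual based root datum $\mathrm{BR}(G_{F^{\mathrm{sep}}})^{\vee}$. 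Spreading this out over $\Div_X^1$ equips $F_1$ with its $W_F$-equivariance; the induced outer action of $W_F$ on $\wh G$ preserves the pinning, so the extension $1 \to \wh G \to \mathcal{H} \to \cW_F \to 1$ splits and $\mathcal{H} \cong \wh G \rtimes \cW_F = {}^L G$. For non-split $G$ one passes to a finite subextension $F'/F$ splitting $G$ (as used above, e.g.\ in Definition \ref{dfn:Hckbdd}), applies the split case over $\Div_{X'}^1$, and descends along $\Gal(F'/F)$; the descent datum produces the semidirect product with precisely the $W_F$-action on $\wh G$ fixed in Section 2.

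The main obstacle is this identification of the Tannakian group, and two points within it deserve the most care. First, establishing exactness and constructibility of $F_I$ \emph{relatively} over $(\Div_X^1)^I$: this is what the universally-locally-acyclic six-functor machinery of \cite[\S IV]{FaScGeomLLC} is built for, but verifying that $Rf_{I!}$ preserves the perverse heart rests on the cohomological smoothness of the expected dimension of the Schubert strata and on the $\langle 2\rho,\mu\rangle$-bounds --- the analogue of semismallness of convolution. Second, pinning down the extension class: the theorem fixes $c \in \Lambda$ with $c^2 = q$ because the half-sum-of-positive-coroots normalisation forces a Tate twist into the $W_F$-action on $F_I$ --- already visible on $H^*(\bP^1)$ for $\mathrm{PGL}_2$ --- and one must check that this twist is consistent across fusion and reproduces $({}^L G)^I$ on the nose, rather than an inner or cyclotomic twist of it.
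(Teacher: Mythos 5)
This is a survey: the paper states the theorem only by citing \cite[Theorem~I.6.3]{FaScGeomLLC} and gives no proof, so your proposal must be measured against the argument there. Your overall architecture --- fiber functor by pushforward to $(\Div_X^1)^I$, Tannakian reconstruction of a flat affine group scheme over $\cW_F^I$, reduction to $|I|=1$ by fusion and full faithfulness on the pairwise-disjoint locus, identification of the group via its root datum, Galois descent for non-split $G$, and the role of $c^2=q$ in normalizing the half-integral Tate twist --- does match the shape of Fargues--Scholze's proof.

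There is, however, a genuine gap exactly where you flag the main obstacle. You propose to identify the Tannakian group in the split $|I|=1$ case by reducing to classical geometric Satake (Mirkovi\'c--Vilonen/Ginzburg, or Zhu/Richarz) ``by comparison via Beauville--Laszlo with the usual affine Grassmannian.'' No such comparison is available: the $B^+_{\mathrm{dR}}$-affine Grassmannian at a geometric point of $\Div_X^1$ is a diamond that is not the diamondification of any algebraic affine Grassmannian, and there is no mechanism transporting the perverse sheaf theory across. The paper itself records that Zhu's Witt-vector Grassmannian sits over $\pi_F=0$ while the Beilinson--Drinfeld object lives over $\pi_F\neq 0$, and that the compatibility of the two Satake equivalences is a \emph{subsequent} theorem (\cite{BanTwomon}) taking the Fargues--Scholze equivalence as input rather than supplying it. Fargues--Scholze instead argue intrinsically: they develop hyperbolic localization for diamonds and use it to build constant term functors along parabolics, and it is \emph{these} that simultaneously prove exactness of the fiber functor and equip it with the $X^*(\wh T)$-grading from which the weight functors, dual torus, and based root datum are extracted. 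Relatedly, your phrase ``semismallness of convolution encoded in the $t$-structure'' elides the same issue: the $\langle 2\rho,\mu\rangle$-bounds \emph{define} the $t$-structure, but exactness of $Rf_{I*}$ on the perverse heart is a theorem requiring the hyperbolic-localization machinery, not a formal consequence of the definition. A further small slip: $F_I(A)$ is not ``placed in degree $0$''; $Rf_{I*}A$ has cohomology in many degrees (already for $\IC$-sheaves of positive-dimensional Schubert cells), the fiber functor is the total cohomology, and that is precisely why the parity decomposition of $\cHck_G^I$ and the sign in $c_{A_1,A_2}$ are needed at all.
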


In Theorem \ref{thm:geomS}, the coefficient ring $\Lambda$ is torsion, but by taking the projective limit and extending the coefficients, we can obtain the equivalence in  $\ol{\bQ}_{\ell}$-coefficients. 
We explain a relation between this geometric Satake equivalence and the Satake isomorphism explained in Section \ref{sec:Satiso}. 
In the rest of this section, we assume that $G$ is unramified and consider the case where $I=\{*\}$ and $\Lambda=\ol{\bQ}_{\ell}$. 
In this case, there is another form of the geometric Satake equivalence 
\[
 \cS^{\mathrm{Witt}} \colon \Rep_{\ol{\bQ}_{\ell}} ({}^L G_{\Lambda}) \stackrel{\sim}{\lra} 
 \mathrm{Sat}_G^{\mathrm{Witt}}(\ol{\bQ}_{\ell}) 
 \]
proved by Zhu in \cite{ZhuAffGmix} (\cf \cite[\S 3.4]{XiZhCycSh}), where 
$\mathrm{Sat}_G^{\mathrm{Witt}}(\ol{\bQ}_{\ell})$ on the right hand side is a Satake category defined using an affine Witt vector Grassmann variety $\Gr_G^{\mathrm{Witt}}$, which is a perfect scheme over $k_{F}$, 
instead of the Beilinson--Drinfeld Grassmann variety. 
Roughly speaking, Fargues--Scholze's geometric Satake equivalence uses the part such that $\pi_F \neq 0$ among the whole untilts over $\cO_F$, and Zhu's geometric Satake equivalence uses the part such that $\pi_F = 0$\footnote{As explained below, Zhu's geometric Satake equivalence is directly related to the Satake isomorphism. On the other hand, the main advantage of Fargues--Scholze's geometric satake equivalence is that we can move untilts such that $\pi_F \neq 0$, which allows us to consider fusion products.}. 
We can construct an equivalence $\mathrm{Sat}_G^{\{*\}} \cong \mathrm{Sat}_G^{\mathrm{Witt}}$ of symmetric monoidal categories geometrically, so that the two geometric Satake equivalences are compatible (\cite{BanTwomon}). 
Furthermore, Zhu's geometric Satake equivalence and the Satake isomorphism is related by the commutative diagram 
\[
\xymatrix{
	K_0 (\Rep_{\ol{\bQ}_{\ell}} ({}^L G_{\Lambda})) 
	\ar[rr]^-{\sim}_-{K_0(\cS^{\mathrm{Witt}})} \ar@{->>}[d] & &
	K_0 ( \mathrm{Sat}_G^{\mathrm{Witt}}(\ol{\bQ}_{\ell}) ) \ar@{->>}[d] \\ 
	\Gamma ((\wh{G}_{\ol{\bQ}_{\ell}} \rtimes \sigma ) \sslash \wh{G}_{\ol{\bQ}_{\ell}},\cO)
	\ar[rr]^-{\sim}_-{\eqref{eq:HGammaiso}} & & \cH (G(F),K) 
}
\]
(\cite[\S 3.5]{XiZhCycSh}, \cite[Appendix]{ZhuSatrami}), 
where $K_0$ denotes the K-group of degree $0$, 
the left vertical morphism is a morphism given by characters, and the right vertical morphism is given by taking the trace of $q$-th power Frobenius element and the natural bijection $\Gr_G^{\mathrm{Witt}}(k_F) \cong G(F)/K$. 

\section{Derevied category of $\ell$-adic sheaves}

For geometrization of the local Langlands correspondence, we need $\ol{\bQ}_{\ell}$-sheaves on $\Bun_G$ that are related with smooth $\ol{\bQ}_{\ell}$-representations of $G_b(F)$ through the morphism \eqref{eq:Buntoquot}. 
The theory in \cite{SchEtdia} is about the sheaves of torsion coefficients, and although it is possible to extend the coefficients to $\ol{\bQ}_{\ell}$ by taking inverse limits, the sheaves obtained in such a way are related to the Banach representations of $G_b(F)$, which are different from what we want. 
There is also the problem that a smooth $\ol{\bQ}_{\ell}$-representation of $G_b(F)$ is not necessarily defined on $\ol{\bZ}_{\ell}$ in general. 
To overcome these problems, \cite[VII]{FaScGeomLLC} uses the idea of solid modules introduced by Clausen--Scholze to construct a derived category of sheaves in need. 
First, we define a solid module according to \cite[Definition 5.1]{SchLecCond} as follows:

\begin{dfn}
Let $M$ be a condensed module. 
we say that $M$ is solid if, for any inverse limit $S= \varprojlim_{i \in I} S_i$ of finite sets, the the morphism 
\[
 \Hom \left( \varprojlim_{i \in I} \bZ [S_i]_{\mathrm{c}}, M\right) \to M(S) 
\] 
induced from the natural morphism $S_{\mathrm{c}} \to \varprojlim_{i \in I} \bZ [S_i]_{\mathrm{c}}$ is an isomorphism,
where $\bZ [S_i]$ denotes the free module generated by $S_i$. 
\end{dfn}

With this definition in mind, we define a solid sheaf as follows: 

\begin{dfn}
Let $X$ be a spatial diamond and $\cF$ be a quasi-pro-etale sheaf of $\wh{\bZ}$-modules over $X$. 
We say that $\cF$ is solid if, for any quasi-pro-etale $j \colon U \to X$ which can be written as the projective limit of a directed inverse system $\{ j_i \colon U_i \to X \}_{i \in I}$ of quasi-compact, quasi-separated etale morphisms, 
the morphism  
\[
\Hom \left( \varprojlim_{i \in I} j_{i,!} \wh{\bZ}, \cF \right) \to \cF (U) 
\]
induced from the natural morphism $\wh{\bZ} \to j^* \varprojlim_{i \in I} j_{i,!} \wh{\bZ}$ is an isomorphism, 
where $j_{i,!} \wh{\bZ}$ denotes $\varprojlim_{n} j_{i,!} (\bZ/n\bZ)$. 
\end{dfn}

\begin{dfn}
Let $X$ be a small v-stack and $\cF$ be a v-sheaf of modules on $X$. 
We say that $\cF$ is solid if $f^* \cF$ is isomorphic to the pullback of a solid sheaf of $\wh{\bZ}$-modules on $X'_{\mathrm{qproet}}$ for any spatial diamond $X'$ and $f \colon X'\to X$. 
\end{dfn}

Let $\Lambda$ be a commutative ring over $\bZ_{\ell}$. 

\begin{dfn}
Let $X$ be a small v-stack, and let $D (X_{\mathrm{v}},\Lambda_{\mathrm{c},\ell})$ denote the derived category of sheaves of condensed $\Lambda_{\mathrm{c},\ell}$-module on $X_{\mathrm{v}}$. 
We define $D_{\solid} (X,\Lambda)$ as the full subcategory of 
$D (X_{\mathrm{v}},\Lambda_{\mathrm{c},\ell})$ consisting of every object $A$ of 
$D (X_{\mathrm{v}},\Lambda_{\mathrm{c},\ell})$ such that the cohomology sheaves in all degree of the images of $A$ in $D (X_{\mathrm{v}},\wh{\bZ})$ are solid. 
\end{dfn}

If $f \colon X \to Y$ is a morphism of small v-stacks, there are $4$ functors $f^*$, $f_*$, $\stackrel{\solid}{\otimes}$, $R\!\sHom_{\solid}$ on $D_{\solid} (-,\Lambda)$ as \cite[VII.2]{FaScGeomLLC}\footnote{Among these, the pull-back and push-out functors are denoted by the same symbols as those on v-site because they are consistent by \cite[Proposition VII.2.1]{FaScGeomLLC}.}, and additionally the left adjoint  functor 
\[
 f_{\natural} \colon D_{\solid} (X,\Lambda) \to D_{\solid} (Y,\Lambda) 
\]
of $f^*$ (\cf \cite[Proposition VII.3.1]{FaScGeomLLC}). 

\begin{dfn}
Let $X$ be an Artin v-stack. 
Let $D_{\lis} (X,\Lambda)$ be the smallest full triangulated subcategory of $D_{\solid} (X,\Lambda)$ that satisfies the following conditions\footnote{The subscript $\lis$ comes from the French word lisse.}: 
\begin{enumerate}
\item 
It is stable under arbitrary direct sum. 
\item 
It contains $f_{\natural} \Lambda_{\mathrm{c},\ell}$ 
for any separated, $\ell$-cohomologically smooth morphism $f \colon X' \to X$ that is representable by locally spatial  diamonds. 
\end{enumerate}
\end{dfn}

If $f \colon X \to Y$ is a morphism of Artin v-stacks, there are $4$ functors $f^*$, $f_{\lis,*}$, $\stackrel{\solid}{\otimes}$, $R\!\sHom_{\lis}$ on $D_{\lis} (-,\Lambda)$ as \cite[VII.6]{FaScGeomLLC}\footnote{The functors of pull-back and tensor are denoted by the same symbols as those on $D_{\solid} (-,\Lambda)$ because they are consistent by \cite[Proposition VII.6.2]{FaScGeomLLC}. The functors of push-out and homomorphism are obtained by composing with the lis-functor in \cite[Proposition VII.6.3]{FaScGeomLLC}.} 

For $b \in G(\breve{F})$, there is an equivalence of derived categories 
\begin{equation}\label{eq:lissmb}
D(G_b (F),\Lambda) \cong D_{\lis}(\Bun_G^{[b]},\Lambda) 
\end{equation}
induced by the morphism \eqref{eq:Buntoquot} (\cite[Proposition VII.7.1]{FaScGeomLLC}). 
Through this equivalence, we can consider geometrization of representations of $p$-adic reductive algebraic groups and operations between them. 
For example, the geometrization of a parabolic induction is given by the geometric Eisenstein functor (\cite[\S 9]{HamGeomES}, \cite[\S 4.3]{HaImDualcpx}\footnote{To be precise, coefficients are torsion in these literatures.}). 

As in \cite[IX.1]{FaScGeomLLC}, we can equip $D_{\solid} (-,\Lambda)$ and $D_{\lis} (-,\Lambda)$ with structures of condensed stable $\infty$-categories, and write them for $\cD_{\solid } (-,\Lambda)$ and $\cD_{\lis} (-,\Lambda)$. 
The above functors on $D_{\solid} (-,\Lambda)$ and $D_{\lis} (-,\Lambda)$ can be defined naturally on $\cD_{\solid} (-,\Lambda)$ and $\cD_{\lis} (-,\Lambda)$ as well. 

\section{Hecke  functor}

We put $\bZ_{\ell}[\sqrt{q}]=\bZ_{\ell}[X]/(X^2 -q)$. 
Using Theorem \ref{thm:geomS}, we consider 
\begin{equation}\label{eq:ZlS}
 \Rep_{\bZ_{\ell}[\sqrt{q}]} ({}^L G_{\bZ_{\ell}[\sqrt{q}]}^I) \to 
 \cD_{\solid} (\cHck_G^I,\bZ_{\ell}[\sqrt{q}] ); \ 
 V \mapsto \varprojlim_{n} \bD (\cS (V/\ell^n V))^{\vee}, 
\end{equation}
where $\bD$ is the relative Verdier dual with respect to 
$\cHck_G^I \to [(\Div_X^1)^I/L_{(\Div_X^1)^I}^+G]$. 
Let $\Lambda$ be a commutative ring over $\bZ_{\ell}[\sqrt{q}]$. 
\eqref{eq:ZlS} induces 
\begin{equation}\label{eq:LamSloc}
 \Rep_{\Lambda} ({}^L G_{\Lambda}^I) \to 
 \cD_{\solid} (\cHck_G^I,\Lambda )
\end{equation}
$\Lambda$-linearly. 
We define a small v-stack $\Hck_G^I$ by 
\begin{align*}
 \Hck_G^I (S)=\{ &\textrm{the groupoid of $G$-bundles $\cE_1$, $\cE_2$ over $X_S$,}\\ 
 &\textrm{$S \to (\Div_X^1)^I$ and an isomorphism $\cE_1|_{X_S \setminus D_S} \cong \cE_2|_{X_S \setminus D_S}$} \}
\end{align*}
for $S \in \Perf_k$. 
We call $\Hck_G^I$ the global Hecke stack. 
We define $p_1 \colon \Hck_G^I \to \Bun_G$ and $p_2 \colon \Hck_G^I \to \Bun_G \times (\Div_X^1)^I$ by sending 
$(\cE_1, \cE_2, S \to (\Div_X^1)^I, \cE_1|_{X_S \setminus D_S} \cong \cE_2|_{X_S \setminus D_S})$ to 
$\cE_1$ and 
$(\cE_2,S \to (\Div_X^1)^I)$, respectively. 
For an affinoid perfectoid space $S$ over $\Div_{X}^d$ such that 
$D_S$ is an affinoid adic space, we define 
$\Hck_G^I (S) \to \cHck_G^I (S)$ by the restriction to $B_{\Div_X^d}^+(S)$. 
Thus we obtain 
\begin{equation}\label{eq:Hckglloc}
\Hck_G^I \to \cHck_G^I. 
\end{equation}
By \eqref{eq:LamSloc} and the pull-back under \eqref{eq:Hckglloc}, we define 
\begin{equation*}
 \Rep_{\Lambda} ({}^L G_{\Lambda}^I) \to 
 \cD_{\solid} (\Hck_G^I,\Lambda );\ V \mapsto \cS'_V . 
\end{equation*}
For $V \in \Rep_{\Lambda} ({}^L G_{\Lambda}^I)$, the functor 
\begin{equation}\label{eq:Hckactdef}
 \cD_{\lis} (\Bun_G,\Lambda) \to 
 \cD_{\solid} (\Bun_G \times (\Div_X^1)^I,\Lambda) ;\ 
 A \mapsto p_{2,\natural} (p_1^* A \stackrel{\solid}{\otimes} \cS'_V) 
\end{equation}
gives 
\[
 T_V \colon \cD_{\lis} (\Bun_G,\Lambda) \to \cD_{\lis} (\Bun_G,\Lambda) 
\]
(\cf \cite[Proposition IX.2.1]{FaScGeomLLC}). 

\begin{thm}[{\cite[Theorem IX.2.2]{FaScGeomLLC}}]\label{thm:TVprev}
Let $V \in \Rep_{\Lambda} ({}^L G_{\Lambda}^I)$. 
Then the functor 
$T_V \in \End (\cD_{\lis} (\Bun_G,\Lambda))$ preserves 
limits, colimits and compact objects, and we have 
\[
 R\!\sHom_{\lis} (T_V(A),\Lambda) \cong T_{\mathrm{sw}*V^{\vee}} R\!\sHom_{\lis} (A,\Lambda)
\]
for $A \in \cD_{\lis} (\Bun_G,\Lambda)$, 
where $V^{\vee}$ denotes the dual of $V$, and $\mathrm{sw}$ denotes the involution of ${}^L G_{\Lambda}$ induced by \eqref{eq:swHck} and Theorem \ref{thm:geomS}. 
\end{thm}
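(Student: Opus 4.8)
The statement of Theorem~\ref{thm:TVprev} collects several properties of $T_V$, and I would prove them in the following order, treating each as a consequence of general properties of the six functors recorded earlier together with the geometric Satake equivalence.

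First, preservation of limits and colimits. The functor $T_V$ is built in \eqref{eq:Hckactdef} as the composition $p_{2,\natural}\bigl(p_1^*(-)\stackrel{\solid}{\otimes}\cS'_V\bigr)$. The functor $p_1^*$ preserves all colimits by construction and preserves limits because it has a left adjoint $p_{1,\natural}$ (here one uses that $p_1$ is $\ell$-cohomologically smooth, which is where boundedness of $V$ and the representability statements from \cite[Proposition IX.2.1]{FaScGeomLLC} enter); tensoring with the fixed object $\cS'_V$ preserves colimits, and preserves limits once we know $\cS'_V$ is dualizable, which follows from universal local acyclicity of $\cS(V)$ and the duality statement in Section~9; and $p_{2,\natural}$ preserves colimits as a left adjoint, while it preserves limits because on the relevant subcategories it agrees, up to a shift by the dualizing complex, with $p_{2,\lis,*}$ —this is exactly the content of the reflection formula one wants to prove next. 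So I would actually prove the duality formula first and then read off preservation of limits.

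Second, and this is the heart of the matter, the duality formula
\[
 R\!\sHom_{\lis}(T_V(A),\Lambda)\cong T_{\mathrm{sw}*V^{\vee}}\,R\!\sHom_{\lis}(A,\Lambda).
\]
The strategy is to unwind $R\!\sHom_{\lis}(T_V(A),\Lambda)=\bD_{\Bun_G}(T_V(A))$ using base change and the projection formula along the correspondence $\Bun_G\xleftarrow{p_1}\Hck_G^I\xrightarrow{p_2}\Bun_G\times(\Div_X^1)^I$. Concretely: relative Verdier duality for $p_{2,\natural}$ turns it into $R p_{2,*}$ applied to the relative dual; the relative dual of $p_1^*A\stackrel{\solid}{\otimes}\cS'_V$ splits, via the universal local acyclicity of $\cS'_V$ over $\Bun_G\times(\Div_X^1)^I$, as $p_1^!\bD(A)\stackrel{\solid}{\otimes}\bD_{\Hck/\,\cdot}(\cS'_V)$; and $p_1$ being $\ell$-cohomologically smooth identifies $p_1^!$ with $p_1^*$ up to an invertible twist. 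The key input is that the relative Verdier dual of $\cS'_V$ is $\cS'_{\mathrm{sw}*V^\vee}$: on the local Hecke stack this is precisely the interaction of Verdier duality with the Satake category, combining $\bD(\cS(V))^\vee$-type formulas (already visible in \eqref{eq:ZlS}) with the involution $\mathrm{sw}$ of \eqref{eq:swHck}, and it is here that the fusion/monoidal compatibility of Theorem~\ref{thm:geomS} and the relation between $\mathrm{sw}$ and $\bD$ on $\mathrm{Sat}_G^I$ is used. Swapping the roles of $p_1$ and $p_2$ (legitimate because the Hecke correspondence is symmetric up to the $\Div$-factor and the $\mathrm{sw}$ involution) then yields the displayed isomorphism.

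Third, preservation of compact objects. Having the duality formula, I would argue that $T_V$ has a both-sided adjoint, namely $T_{\mathrm{sw}*V^\vee}$ up to the above duality (equivalently $T_V$ is computed by a kernel and so is its adjoint); a functor with a colimit-preserving right adjoint preserves compact objects. Alternatively one checks directly that $T_V$ sends the generators $f_\natural\Lambda_{\mathrm{c},\ell}$ of $\cD_{\lis}(\Bun_G,\Lambda)$ to compact objects, using that the Hecke correspondence is, after the appropriate truncation by boundedness of $V$, a nice enough morphism (proper on the support, $\ell$-cohomologically smooth legs) so that $p_{2,\natural}$ of a compact object stays compact.

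\textbf{Main obstacle.} The genuine difficulty is the middle step: proving that the relative Verdier dual of the Satake kernel $\cS'_V$ is again a Satake kernel, namely $\cS'_{\mathrm{sw}*V^\vee}$. This requires knowing that $\bD$ is an (anti-)monoidal self-equivalence of $\mathrm{Sat}_G^I(\Lambda)$ intertwining the $\mathrm{sw}$-involution with the dual on $\Rep_\Lambda({}^LG^I_\Lambda)$, which in turn rests on the full strength of the geometric Satake equivalence (Theorem~\ref{thm:geomS}) together with universal local acyclicity keeping everything in the ULA/flat-perverse world so that Verdier duality is well-behaved and commutes with base change along $(\Div_X^1)^I$. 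Packaging these compatibilities correctly — rather than any single computation — is where the real work lies; once it is in place, the global statement follows formally by base change, projection formula, and adjunction.
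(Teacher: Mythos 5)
This survey states Theorem~\ref{thm:TVprev} as a citation of \cite[Theorem IX.2.2]{FaScGeomLLC} and does not reproduce a proof, so there is no in-paper argument to compare against; measuring your outline against the Fargues--Scholze source, it follows their strategy in its essentials. You correctly identify the pivot: that the relative Verdier dual of the Satake kernel $\cS'_V$ (along the projection to $\Bun_G\times(\Div_X^1)^I$, up to the $\ell$-cohomologically smooth twists from the legs $p_i$) is $\cS'_{\mathrm{sw}^*V^\vee}$. This rests on universal local acyclicity of objects of $\mathrm{Sat}_G^I$, the self-dual normalization built into \eqref{eq:ZlS}, and the interaction of $\mathrm{sw}$ with Verdier duality under the geometric Satake equivalence, exactly as you say; and the global duality formula is then deduced by relative duality, projection formula and base change along the correspondence $\Bun_G\xleftarrow{p_1}\Hck_G^I\xrightarrow{p_2}\Bun_G\times(\Div_X^1)^I$. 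That is the Fargues--Scholze route.

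The one place you go too fast is the passage from ``Verdier-conjugate'' to ``adjoint.'' The relation $\bD\,T_V\,\bD\cong T_{\mathrm{sw}^*V^\vee}$ is a conjugation by a contravariant self-equivalence; it does not by itself exhibit $T_{\mathrm{sw}^*V^\vee}$ as a right or left adjoint of $T_V$, and therefore does not immediately give limit-preservation or compactness-preservation. In \cite{FaScGeomLLC} the existence of a colimit-preserving right adjoint of $T_V$ is established by a separate kernel computation --- using properness of the (bounded) Hecke correspondence over the support of $V$ and $\ell$-cohomological smoothness of the legs --- and only afterwards identified, via the same Verdier computation, with $T_{\mathrm{sw}^*V^\vee}$. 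Boundedness of $V$ (Definition~\ref{dfn:Hckbdd}) is doing real work in that properness argument, not just bookkeeping. You gesture at this with ``$T_V$ is computed by a kernel and so is its adjoint,'' but that sentence is precisely where the substance of the proof lives and needs to be argued rather than asserted; once it is in place, your deduction of compact-object preservation (colimit-preserving right adjoint) and of limit-preservation is correct.
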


The global Hecke stack used in the construction of the Hecke functor is closely related to the local Shimura variety and its generalization in a certain sense, moduli space of the mixed characteristic shtukas. Therefore, results for the Hecke functor contain information on the cohomology of these spaces. 
For example, using Theorem \ref{thm:TVprev}, we can deduce results on the finiteness of the $\ell$-adic etale cohomology of those spaces (\cf \cite[IX.3]{FaScGeomLLC}, \cite[\S 3]{ImaConv}). 

Let $\cD_{\lis} (\Bun_G,\Lambda)^{\omega}$ be the full condensed stable $\infty$-subcategory of compact objects of $\cD_{\lis} (\Bun_G,\Lambda)$. 
This gives a structure of a condensed  $\infty$-category on $\End (\cD_{\lis} (\Bun_G,\Lambda)^{\omega})$, and $\Aut (\cF)$ has a structure of a condensed animated group\footnote{Animation is an operation that creates an $\infty$-category from a category that satisfies appropriate conditions (\cf \cite[5.1.4]{CeScPurfl}). See \cite[Example 5.1.6]{CeScPurfl} for animated group.} for $\cF \in \End (\cD_{\lis} (\Bun_G,\Lambda)^{\omega})$. 
Let $\End (\cD_{\lis} (\Bun_G,\Lambda))^{\omega,BW_F^I}$ be the $\infty$-category of pairs of $\cF \in \End (\cD_{\lis} (\Bun_G,\Lambda)^{\omega})$ and a morphism $(W_F^I)_c \to \Aut (\cF)$ of condensed animated groups. 
Then, by \eqref{eq:Hckactdef}, we have a $\Rep_{\Lambda} ((\cW_F^I)_{\Lambda})$-linear monoidal  functor 
\begin{equation}\label{eq:Hecfun}
 T \colon \Rep_{\Lambda} ({}^L G_{\Lambda}^I) \to \End_{\Lambda}(\cD_{\lis} (\Bun_G,\Lambda)^{\omega})^{BW_F^I};\ V \mapsto T_V 
\end{equation}
which is functorial with respect to $I$ (\cf \cite[Corollary IX.2.4]{FaScGeomLLC}), 
where $\Rep_{\Lambda} ((\cW_F^I)_{\Lambda})$-linearity means that there is a natural isomorphism 
\begin{equation}\label{eq:Tlin}
 T_{V \otimes W}(A) \cong T_V(A) \otimes_{\Lambda} W
\end{equation}
for $W \in \Rep_{\Lambda} ((\cW_F^I)_{\Lambda})$ and $A \in \cD_{\lis} (\Bun_G,\Lambda)^{\omega}$. 

\section{Construction of semisimple $L$ parameters} 

Let $P$ be an open subgroup of $P_{F^*}$ which is a normal subgroup of $W_F$. 
For $n \geq 0$, let $F_n$ denote the free group generated by $n$ elements. 
For $n \geq 0$ and homomorphism $F_n \to W_F^0/P$, we consider the induced morphism 
\begin{equation}\label{eq:FnW0}
\cO (Z^1 (F_n,\wh{G})) \to \cO (Z^1 (W_F^0/P,\wh{G})) 
\end{equation}
(\cf Lemma \ref{lem:Z1rep}), where the action of $F_n$ on $\wh{G}$ is given by $F_n \to W_F^0/P$. 
The morphism \eqref{eq:FnW0} is $\wh{G}$-equivariant with respect to the conjugate action of $\wh{G}$. 
The morphism 
\[
 \mathrm{colim}_{(n,F_n \to W_F^0/P)} \cO (Z^1 (F_n,\wh{G})) \to \cO (Z^1 (W_F^0/P,\wh{G})) 
\]
obtained by taking colimit of \eqref{eq:FnW0} with respect to transition morphisms induced by morphisms $F_m \to F_n$ over $W_F^0/P$ is an isomorphism of rings with $\wh{G}$-actions. 
We put 
\[
 \mathrm{Exc} (W_F^0/P,\wh{G})=\mathrm{colim}_{(n,F_n \to W_F^0/P)} 
 \cO (Z^1 (F_n,\wh{G}))^{\wh{G}}. 
\]
The morphism 
\[
 \mathrm{Exc} (W_F^0/P,\wh{G}) \to \cO (Z^1 (W_F^0/P,\wh{G}))^{\wh{G}} 
\]
induced by \eqref{eq:FnW0} is a universally homeomorphism, and becomes an isomorphism after inverting $\ell$ (\cf \cite[VIII.3.2]{FaScGeomLLC}). 
Further, we put 
\[
 \mathrm{Exc} (W_F,\wh{G})=\varprojlim_{P} \mathrm{Exc} (W_F^0/P,\wh{G}) 
\]
and call it the excursion algebra. 
\begin{equation}\label{eq:ExO}
 \mathrm{Exc} (W_F,\wh{G}) \to \cO (Z^1 (W_F,\wh{G}))^{\wh{G}} 
\end{equation}
is also a universally homeomorphism, and becomes an isomorphism after inverting $\ell$. 
We put 
\[
 \cZ^{\mathrm{geom}} (G,\Lambda )=\pi_0 \End (\id_{\cD_{\lis} (\Bun_G,\Lambda)}) 
\]
and call it the geometric Bernstein center of $G$. 
In the following, we construct a natural morphism 
\begin{equation}\label{eq:ExcZgeom}
 \mathrm{Exc} (W_F,\wh{G})_{\Lambda} \to \cZ^{\mathrm{geom}} (G,\Lambda ). 
\end{equation}
Let $\cC_P  
 \subset 
 \cD_{\lis} (\Bun_G,\Lambda)^{\omega}$ be the full $\infty$-subcategory of 
$A \in \cD_{\lis} (\Bun_G,\Lambda)^{\omega}$ such that 
the action of $P^I$ on $T_V (A)$ is trivial for any $V \in \Rep_{\Lambda} ({}^L G_{\Lambda}^I)$. 
Since, for any $A \in \cD_{\lis} (\Bun_G,\Lambda)^{\omega}$, there exists $P$ such that $A \in \cC_P$ by \cite[Proposition IX.5.1]{FaScGeomLLC}, constructing 
\eqref{eq:ExcZgeom} is reduced to constructing 
\begin{equation}\label{eq:ExcCP}
 \mathrm{Exc} (W_F^0/P,\wh{G}) \to \pi_0 \End (\id_{\cC_P}) . 
\end{equation}
Let $Q$ be the image of $W_F \to \Aut (\wh{G})$. 
In the construction of \eqref{eq:ExcCP}, an excursion data\footnote{For the origin of the term excursion, see the diagram \eqref{eq:excdiag}.} defined below are used. 

\begin{dfn}
A tuple $(I,V,\alpha,\beta,\gamma)$ of 
a finite set $I$, $V \in \Rep_{\bZ_{\ell}} ((\wh{G} \rtimes Q)^I)$, 
$\alpha \colon 1 \to V|_{\wh{G}}$, 
$\beta \colon V|_{\wh{G}} \to 1$ and 
$\gamma \in (W_F^0/P)^I$ is called an excursion data, 
where $V|_{\wh{G}}$ denotes the restriction of $V$ by the diagonal morphism $\wh{G} \to  (\wh{G} \rtimes Q)^I$. 
\end{dfn}

Let 
$\cD =(I,V,\alpha,\beta,\gamma)$ be an excursion data. Let 
$\wt{\alpha} \colon \Ind_{\wh{G}}^{\wh{G} \rtimes Q} 1_{\wh{G}} \to V|_{\wh{G} \rtimes Q}$ and 
$\wt{\beta} \colon V|_{\wh{G} \rtimes Q} \to \Ind_{\wh{G}}^{\wh{G} \rtimes Q} 1_{\wh{G}}$ 
be the morphisms induced by $\alpha$ and $\beta$ respectively. 
We define an endomorphism $S_{\cD}$ of  
$\id_{\cC_P}$ by 
\begin{equation}\label{eq:excdiag}
\begin{split}
 \xymatrix{
 \id_{\cC_P}=T_{1_{\wh{G} \rtimes Q}} \ar[r] & 
 T_{1_{\wh{G} \rtimes Q}} \otimes \Ind_{1}^{Q} 1 \ar@{=}[r]^-{\stackrel{\eqref{eq:Tlin}}{\sim}} &
 T_{\Ind_{\wh{G}}^{\wh{G} \rtimes Q} 1_{\wh{G}}} \ar[r]^-{T_{\wt{\alpha}}} & 
 T_{V|_{\wh{G} \rtimes Q}} \ar@{=}[r]^-{\sim} & 
 T_V \ar[d]^-{\gamma} \\ 
 \id_{\cC_P} = T_{1_{\wh{G} \rtimes Q}} & 
 T_{1_{\wh{G} \rtimes Q}} \otimes \Ind_{1}^{Q} 1 \ar[l] & 
 T_{\Ind_{\wh{G}}^{\wh{G} \rtimes Q} 1_{\wh{G}}} \ar@{=}[l]_-{\stackrel{\eqref{eq:Tlin}}{\sim}} & 
 T_{V|_{\wh{G} \rtimes Q}} \ar[l]_-{T_{\wt{\beta}}} & 
 T_V \ar@{=}[l]_-{\sim}
}
\end{split}
\end{equation}
where the first and last morphisms are induced from the natural morphisms $1_Q \to \Ind_{1}^Q 1$ and $\Ind_{1}^Q 1 \to 1_Q$ of $Q$-representations, 
and two isomorphims $T_{V|_{\wh{G} \rtimes Q}} \cong T_V$ are given by the functoriality of $T$ with respect to $I$. 

For $f \in \cO (\wh{G}\backslash (\wh{G} \rtimes Q)^I/ \wh{G})$, let
$V_f \subset \cO ((\wh{G} \rtimes Q)^I/ \wh{G})$ be the sub-$(\wh{G} \rtimes Q)^I$-representation generated by $f$. We have 
$\alpha_f \colon 1 \to V_f|_{\wh{G}}$ given by $f$, 
and $\beta_f \colon V_f|_{\wh{G}} \to 1$ given by substituting $1 \in (\wh{G} \rtimes Q)^I$. 
For $\gamma \in (W_F^0/P)^I$, we put 
$\cD_{\gamma,f}=(I,V_f,\alpha_f,\beta_f,\gamma)$ and define 
\[
 \Theta_{\gamma} \colon \cO (\wh{G}\backslash (\wh{G} \rtimes Q)^I/ \wh{G}) \to \pi_0 \End (\id_{\cC_P}) ;\ f \mapsto S_{\cD_{\gamma,f}}. 
\]
If $F_n \to (W_F^0/P)^{\{1,\ldots,n\}}$ is given, 
let 
$w_1,\ldots,w_n \in (W_F^0/P)^{\{1,\ldots,n\}}$ be the image of the natural generators of $F_n$, and we put $w=(1,w_1,\ldots,w_n) \in (W_F^0/P)^{\{0,\ldots,n\}}$. 
Then, by 
\begin{align*}
 &\Theta_w \colon \cO (\wh{G}\backslash (\wh{G} \rtimes Q)^{\{0,\ldots,n\}}/ \wh{G}) \to \pi_0 \End (\id_{\cC_P}), \\ 
 &\cO (\wh{G}\backslash (\wh{G} \rtimes Q)^{\{0,\ldots,n\}}/ \wh{G}) 
 \otimes_{\cO (Q^{\{0,\ldots,n\}})} \cO (\{ w \}) 
 \cong \cO (Z^1 (F_n,\wh{G}) \sslash \wh{G}) , 
\end{align*}
we have 
$\cO (Z^1 (F_n,\wh{G}))^{\wh{G}} 
 \to \pi_0 \End (\id_{\cC_P})$. 
Taking the colimit of this, we obtain \eqref{eq:ExcCP}. 

Next, we construct a semisimple L-parameter using \eqref{eq:ExcZgeom}. 
Let $\bfC$ be an algebraically closed field over $\bZ_{\ell}[\sqrt{q}]$ and $\pi$ be a smooth irreducible representation of $G(F)$ over $\bfC$. 
Considering $\cF_{\pi} \in D_{\lis}(\Bun_G^{[1]},\bfC)$ corresponding to $\pi$ under \eqref{eq:lissmb}, 
$\cZ^{\mathrm{geom}} (G,\bfC )$ acts on $i^{[1]}_{\natural} \cF_{\pi} \in D_{\lis}(\Bun_G,\bfC)$ by a scalar multiple since $\pi$ is irreducible. 
By this action and \eqref{eq:ExcZgeom}, we have $\mathrm{Exc} (W_F,\wh{G})_{\bfC} \twoheadrightarrow \bfC$. 
Furthermore, we obtain a $\bfC$-valued point of $Z^1(W_F,\wh{G}) \sslash \wh{G}$ by the universally homeomorphism \eqref{eq:ExO}. 
The $\bfC$-valued points of $Z^1(W_F,\wh{G}) \sslash \wh{G}$ correspond to the closed $\wh{G}_{\bfC}$-orbits of $Z^1(W_F,\wh{G})_{\bfC}$, and further to the $\wh{G}(\bfC)$-conjugacy classes of semisimple $\ell$-adic L-parameters in $\bfC$-coefficients (\cite[Proposition VIII.3.2]{FaScGeomLLC}). 
Thus we have constructed the semisimple L-parameter of $\pi$. 

It is an important question whether the semisimple L-parameters constructed in this way are compatible with the already known constructions of the local Langlands correspondence.
In the following, we discuss the case $\bfC=\ol{\bQ}_{\ell}$. 
The compatibility is proved in \cite[Theorem IX.7.4]{FaScGeomLLC} if $G$ is $\GL_n$. 
If $F$ is mixed characteristic, it is proved in \cite[Theorem 6.6.1]{HKWKotloc} in the case where $G$ is an inner form of $\GL_n$, the cases of $\GSp_4$ and $\GU_n$ are studied in \cite{HamCompGT} and \cite{BMHNCompuni}, respectively. 
For general $G$, if $F$ is mixed characteristic and the irreducible representation of $G(F)$ has a non-zero parahoric fixed vector, then it is shown in \cite{LiComppar}. 
In the equal characteristic case, it is proved in \cite{LiHLgcompfun} that the Fargues--Scholze's construction is compatible with the Lafforgues' construction \cite{LafChtred} in the case of a function field, and coincides with the construction \cite{GeLaChtloc} by Genestier--Lafforgue. 

Other applications of the morphism \eqref{eq:ExcZgeom} include results on the finiteness of Hecke algebras of $p$-adic reductive algebraic groups in \cite{DHKMFinHec}, \cite{DHKMLLfamban}. 
It is remarkable that although \eqref{eq:ExcZgeom} is constructed geometrically, the assertion of the results on the finiteness of Hecke algebras are purely algebraic. 

\section{Singular support of coherent sheaves} 

In the geometrization conjecture of the local Langlands correspondence, a condition on the singular supports of coherent sheaves is used, following the formulation of the geometric Langlands correspondence in \cite{ArGaSinggLc}. 
In this section, we explain singular support. 

Let $S$ be a regular affine scheme and $X$ be a disjoint union of flat locally complete intersection affine schemes over $S$. 
Let $\bL_{X/S}$ be the cotangent complex of $X$ over $S$ (\cite[II (1.2.7.1)]{IllCcotI}). 
By \cite[III Proposition 3.2.6]{IllCcotI}, $\bL_{X/S}$ is isomorphic to a complex of vector bundles which is zero at degrees outside $[-1,0]$, locally on $X$. 
We put 
\[
 \mathrm{Sing}_{X/S}=\Spec \Sym_{\cO_X}^{\bullet} H^1 (\bL_{X/S}^{\vee}). 
\]
$\mathrm{Sing}_{X/S}$ is a group scheme affine over $X$ that represents the functor sending $X$-scheme $X'$ to $H^{-1} (\bL_{X/S} \otimes_{\cO_X}^{\bL} \cO_{X'})$, and has a natural action of $\bG_{\mathrm{m}}$. 
By \cite[III Proposition 3.1.2, Corollaire 3.2.7]{IllCcotI}, for $x \in X$, the triviality of $\mathrm{Sing}_{X/S} \times_X x$ is equivalent to the smoothness of $X \to S$ at $x$. 

By the product $\cO_X \otimes_{\cO_S} \cO_X \to \cO_X$, we view 
$\cO_X$ as an $\cO_X \otimes_{\cO_S} \cO_X$-module, and put 
\[
 \mathit{HH}^{\bullet} (X/S)=\Ext^{\bullet}_{\cO_X \otimes_{\cO_S} \cO_X} (\cO_X,\cO_X) . 
\]
We have 
\begin{equation}\label{eq:HLHH}
 H^1 (\bL_{X/S}^{\vee}) \cong \mathit{HH}^2 (X/S) 
\end{equation}
by \cite[III Th\'eor\`eme 1.2.3]{IllCcotI} and \cite[Theorem X.3.1]{MacHomology}. 
We write $D_{\mathrm{coh}}$ for the derived category of coherent sheaves. 
Let $\cE \in D_{\mathrm{coh}}^{\mathrm{b}}(X)$. 
By 
\[
 R\Hom_{\cO_X \otimes_{\cO_S} \cO_X} (\cO_X,\cO_X) \to 
 R\Hom_{\cO_X} (\cO_X \otimes_{\cO_X}^{\bL} \cE,\cO_X \otimes_{\cO_X}^{\bL} \cE) \cong 
 R\Hom_{\cO_X} (\cE,\cE) 
\]
we have 
\begin{equation}\label{eq:HHExt}
 \mathit{HH}^{\bullet} (X/S) \to \Ext^{\bullet}_{\cO_X} (\cE,\cE) . 
\end{equation}
By \eqref{eq:HLHH} and \eqref{eq:HHExt}, 
$\Ext^{\bullet}_{\cO_X} (\cE,\cE)$ has a structure of 
$\Sym_{\cO_X}^{\bullet} H^1 (\bL_{X/S}^{\vee})$-algebra, and define a 
$\bG_{\mathrm{m}}$-equivariant quasi-coherent sheaf $\mu \End (\cE)$ over $\mathrm{Sing}_{X/S}$. 
The support of $\mu \End (\cE)$ is called the singular support of $\cE$, and denoted by $\mathrm{SingSupp}(\cE)$. 
By \cite[Theorem VIII.2.9]{FaScGeomLLC}, 
$\cE$ is a perfect complex if and only if 
$\mathrm{SingSupp}(\cE)$ is contained in the zero section of $\mathrm{Sing}_{X/S}$. 

\section{Categorical conjecture}

We assume one of the following: 
\begin{itemize}
\item 
$\Lambda$ is an algebraic field extension of $\bQ_{\ell}[\sqrt{q}]$. 
\item 
$\Lambda$ is the ring of integers in an algebraic field extension of $\bQ_{\ell}[\sqrt{q}]$, 
and $\ell$ does not divide the order of $\pi_1(\wh{G})_{\mathrm{tor}}$. 
\end{itemize}

Let $\Perf (\mathrm{LP}_{G,\Lambda})$ denote the stable $\infty$-category of perfect complexes over $\mathrm{LP}_{G,\Lambda}$. 
Then, by \cite[Theorem X.0.1]{FaScGeomLLC}, 
we can naturally construct a $\Lambda$-linear action of 
$\Perf (\mathrm{LP}_{G,\Lambda})$ on 
$\cD_{\lis} (\Bun_G,\Lambda)^{\omega}$ such that its composition with 
\begin{equation*}
 \Rep_{\Lambda} ({}^L G_{\Lambda}^I) \to \Perf (\mathrm{LP}_{G,\Lambda})^{BW_F^I} ;\ 
 V \mapsto (\cO_{Z^1 (W_F,\wh{G})_{\Lambda}} \otimes V )/\wh{G}_{\Lambda}
\end{equation*}
coincides with the functor \eqref{eq:Hecfun}. 
This action is called spectral action. 
This is what should exist if we assume the categorical conjecture explained later, and we can actually construct it, but we can also use the spectral action to define a functor that is expected to give the equivalence of categories in the categorical conjecture. 
The spectral action is very useful, and its application to the Eichler--Shimura relation for local Shimura varieties is given in \cite{KosESloc}, and its application to the vanishing of the cohomology of local Shimura varieties and Shimura varieties is given in \cite{KosOngenlg}. 

By \cite[Corollary VIII.2.3]{FaScGeomLLC}, there is a natural immersion 
\begin{equation}\label{eq:Singemb}
	\mathrm{Sing}_{Z^1 (W_F,\wh{G})_{\Lambda}/\Lambda} \hookrightarrow 
	\Lie (\wh{G})_{\Lambda}^* \times_{\Lambda} Z^1 (W_F,\wh{G})_{\Lambda}. 
\end{equation}
Let $\cN_{\wh{G}} \subset \Lie (\wh{G})^*$ be the closed subset given by the union of all $\wh{G}$-orbits whose closure contain the origin. 
Let $\cD_{\mathrm{coh},\mathrm{Nilp}}^{\mathrm{b},\mathrm{qc}}(\mathrm{LP}_{G,\Lambda})$ be the 
stable $\infty$-category of bounded complexes of quasi-coherent sheaves over 
$\mathrm{LP}_{G,\Lambda}$ whose cohomology in each degree satisfies the following conditions: 
\begin{itemize}
\item 
It has support in a finite number of connected components.
\item 
It is coherent on each connected component. 
\item 
After the pullback to $Z^1 (W_F,\wh{G})_{\Lambda}$, its singular support is contained in  $\cN_{\wh{G},\Lambda} \times_{\Lambda} Z^1 (W_F,\wh{G})_{\Lambda}$ under the immersion  \eqref{eq:Singemb}\footnote{This condition regarding singular support is automatically satisfied if $\Lambda$ is an algebraic field extension of $\bQ_{\ell}[\sqrt{q}]$ (\cite[Proposition VIII.2.11]{FaScGeomLLC}).}. 
\end{itemize}

Let $\Perf^{\mathrm{qc}} (\mathrm{LP}_{G,\Lambda})$ be the full stable $\infty$-subcategory of $\Perf (\mathrm{LP}_{G,\Lambda})$ consisting of every object which has support in a finite number of connected components. 
By the extension of the spectral action preserving colimits, 
we have an action of $\Ind \Perf^{\mathrm{qc}} (\mathrm{LP}_{G,\Lambda})$ on $\cD_{\lis} (\Bun_G,\Lambda)$. 
The next is the categorical conjecture formulated by Fargues--Scholze.

\begin{conj}\label{conj:geomcj}
We assume that $G$ is quasi-split, and fix a Whittaker datum $(B,\psi)$ in $\Lambda$-coefficients for $G$. 
By \eqref{eq:lissmb}, we view $\cInd_{R_{\mathrm{u}}(B)(F)}^{G(F)} \psi$ as an object of  
$\cD_{\lis}(\Bun_G^{[1]},\Lambda)$, and put 
\[
\cW_{\psi} =i^{[1]}_{\natural} (\cInd_{R_{\mathrm{u}}(B)(F)}^{G(F)} \psi ) \in \cD_{\lis}(\Bun_G,\Lambda) . 
\]
Then, the restriction to $\cD_{\lis} (\Bun_G,\Lambda)^{\omega}$ of the right adjoint functor of 
\[
 a_{\psi} \colon \Ind \Perf^{\mathrm{qc}} (\mathrm{LP}_{G,\Lambda}) \to 
 \cD_{\lis} (\Bun_G,\Lambda)
\]
given the action of $\Ind \Perf^{\mathrm{qc}} (\mathrm{LP}_{G,\Lambda})$ to $\cW_{\psi}$ 
induces an equivalence 
\[
 \cD_{\lis} (\Bun_G,\Lambda)^{\omega} \cong 
 \cD_{\mathrm{coh},\mathrm{Nilp}}^{\mathrm{b},\mathrm{qc}}(\mathrm{LP}_{G,\Lambda}) 
\]
of stable $\infty$-categories that is compatible with the actions of $\Perf (\mathrm{LP}_{G,\Lambda})$. 
\end{conj}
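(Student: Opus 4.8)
The plan is to prove the conjecture in four steps for the adjunction $a_{\psi} \dashv b_{\psi}$, where $b_{\psi}$ denotes the right adjoint of the action functor $a_{\psi}$: show that (a) $b_{\psi}$ carries $\cD_{\lis}(\Bun_G,\Lambda)^{\omega}$ into $\cD_{\mathrm{coh},\mathrm{Nilp}}^{\mathrm{b},\mathrm{qc}}(\mathrm{LP}_{G,\Lambda})$; (b) $b_{\psi}$ is fully faithful there; (c) its essential image is everything; and (d) the resulting equivalence is $\Perf(\mathrm{LP}_{G,\Lambda})$-linear. Step (d) is essentially automatic: $a_{\psi}$ is by construction the action of $\Ind\Perf^{\mathrm{qc}}(\mathrm{LP}_{G,\Lambda})$ on the single object $\cW_{\psi}$ and is therefore a morphism of module categories over $\Perf(\mathrm{LP}_{G,\Lambda})$, so its adjoint intertwines the two actions. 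This module structure is also what should reduce (a)--(c) to statements about a single generator: on the automorphic side one expects $\cD_{\lis}(\Bun_G,\Lambda)^{\omega}$ to be generated, under finite colimits, retracts and the Hecke operators $T_V$ of Theorem \ref{thm:TVprev}, by $\cW_{\psi}$, while on the spectral side $\cD_{\mathrm{coh},\mathrm{Nilp}}^{\mathrm{b},\mathrm{qc}}(\mathrm{LP}_{G,\Lambda})$ should be generated, under the analogous operations and the $\Perf(\mathrm{LP}_{G,\Lambda})$-action, by $\cO_{\mathrm{LP}_{G,\Lambda}}$, with the Hecke translates $(\cO \otimes V)/\wh{G}_{\Lambda}$ matching $T_V \cW_{\psi}$ through geometric Satake and \eqref{eq:Hecfun}.

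For step (a) I would combine the finiteness properties of the Hecke action (Theorem \ref{thm:TVprev}: the $T_V$ preserve compact objects and satisfy the stated duality) with \cite[Theorem VIII.2.9]{FaScGeomLLC}, according to which a coherent complex on $\mathrm{LP}_{G,\Lambda}$ is perfect precisely when its singular support lies in the zero section: the singular support of $b_{\psi}(A)$, viewed via \eqref{eq:Singemb} inside $\Lie(\wh{G})^*_\Lambda \times_\Lambda Z^1(W_F,\wh{G})_\Lambda$, is governed by the obstruction for $T_{(-)}(A)$ to extend compatibly as $(-)$ ranges over $\Rep_{\Lambda}({}^L G^I_{\Lambda})$, and the nilpotence of that support should be read off from the fact that all Hecke operators are built from finite-dimensional representations together with the ULA property of $\cW_{\psi}$ over $\Bun_G$; boundedness and support-finiteness reflect the corresponding finiteness of the Hecke action. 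Step (b) is the representation-theoretic heart. Using the adjunctions and the self-duality of $T_V$ one reduces the computation of $R\!\sHom_{\lis}(T_V\cW_{\psi}, T_W\cW_{\psi})$ to the single module $\cW_{\psi}$, and must show that the Whittaker-coefficient functor $A \mapsto R\!\sHom_{\lis}(\cW_{\psi}, T_{(-)}A)$ exhibits $\cW_{\psi}$ as free of rank one over the spectral Bernstein center, i.e. that \eqref{eq:ExcZgeom} and \eqref{eq:ExO} identify $\End(\cW_{\psi})$ with $\cO(Z^1(W_F,\wh{G}))^{\wh{G}}$ compatibly with the Hecke action. In the unramified case with $I=\{*\}$ and in cohomological degree $0$ this is exactly the Satake isomorphism reread as in Section \ref{sec:Satiso}, and in general the computation should be made tractable by the universal local acyclicity of $\cW_{\psi}$ and by the Jacobian criterion of \cite{FaScLangIHES}.

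Step (c) splits into two generation statements. On the coherent side one must prove that $\cD_{\mathrm{coh},\mathrm{Nilp}}^{\mathrm{b},\mathrm{qc}}(\mathrm{LP}_{G,\Lambda})$ is generated, as a module over $\Perf(\mathrm{LP}_{G,\Lambda})$, by the Hecke translates of $\cO$ --- a purely coherent-sheaf statement on the quotient $[Z^1(W_F,\wh{G})/\wh{G}]$ of the flat locally complete intersection scheme $Z^1(W_F,\wh{G})$, to be established by a Koszul-type resolution argument along $\mathrm{Sing}_{Z^1(W_F,\wh{G})_\Lambda/\Lambda}$ adapted to the nilpotent-cone condition, in the spirit of Arinkin--Gaitsgory \cite{ArGaSinggLc}. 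On the automorphic side one must prove that $\cD_{\lis}(\Bun_G,\Lambda)^{\omega}$ is generated by the Hecke translates of $\cW_{\psi}$; here the plan is a descent along the stratification of $\Bun_G$ by $B(G)$ together with the geometric Eisenstein functor (\cite{HamGeomES}, \cite{HaImDualcpx}), reducing generation for $G$ to generation for proper Levi subgroups and ultimately to tori, where it is elementary, the cuspidal strata being handled via the genericity and universal local acyclicity of $\cW_{\psi}$. Granting both, fully faithfulness together with $b_{\psi}$ preserving the relevant operations and the matching of generators upgrades to an equivalence on the nose.

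\textbf{The main obstacle} is step (c) in both of its halves, together with the honest caveat that the conjecture is open for general $G$. Generation of $\cD_{\lis}(\Bun_G,\Lambda)^{\omega}$ by Whittaker Hecke translates already packages most of the content of the local Langlands correspondence --- it forces every irreducible representation to appear in a Hecke eigenspace built from the Whittaker sheaf --- while the singular-support matching is the analogue of the deepest technical input of the geometric Langlands program, and neither is presently available in general. Further difficulties are the integral-coefficient case, where one must control $\ell$-torsion in $\pi_1(\wh{G})$, and the verification that this Whittaker normalization agrees with the classically known correspondences, which so far is known only for $\GL_n$ and its inner forms, $\GSp_4$, $\GU_n$, the Iwahori block via \cite{BCHNCohSpr}, and the equal-characteristic case via \cite{LiHLgcompfun}.
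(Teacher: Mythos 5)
This is a \emph{conjecture} in the paper (Conjecture \ref{conj:geomcj}): Fargues--Scholze do not prove it, so there is no paper proof for your proposal to be measured against. What you have written is a strategy outline for an open problem, which you yourself flag at the end. As such, the right thing to assess is whether the outline is internally coherent and whether the identified obstacles are the real ones --- and on both counts you do reasonably well, but a few points need sharpening.

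Step (d) is \emph{not} quite automatic as stated. The right adjoint of a module functor over a symmetric monoidal category is a priori only lax-monoidal/lax-module; to promote it to a strict module functor one needs rigidity of $\Perf(\mathrm{LP}_{G,\Lambda})$ (dualizability of objects), and this is where the hypothesis that $\mathrm{LP}_{G,\Lambda}$ is a quotient of a locally complete intersection by a reductive group enters --- it makes $\Perf$ rigid. That is fine, but it is an input, not a formality. In step (a), the claim that nilpotence of singular support ``should be read off from the fact that all Hecke operators are built from finite-dimensional representations together with the ULA property of $\cW_\psi$'' undersells the difficulty. Nilpotence of singular support is a genuinely geometric assertion about the failure locus of smoothness of the Hecke action, and in the global geometric Langlands literature it requires substantial independent work (Arinkin--Gaitsgory and their successors); finite-dimensionality of the coefficients alone does not force it. The honest statement is that this is part of the content of the conjecture and is only known under the restrictive coefficient hypotheses which make the nilpotence condition vacuous (as the paper itself notes for $\Lambda$ a field extension of $\bQ_\ell[\sqrt q]$, via \cite[Proposition VIII.2.11]{FaScGeomLLC}).

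The real heart --- steps (b) and (c) --- you correctly identify as the main gap, and your description of what would be involved (Whittaker coefficient functor exhibiting $\cW_\psi$ as a rank-one module over the excursion/spectral Bernstein center; generation of $\cD_{\lis}(\Bun_G,\Lambda)^\omega$ by Whittaker Hecke translates via Eisenstein descent along the $B(G)$-stratification; matching of generators on the coherent side) is consistent with how experts expect the argument to go and with the partial results cited in the paper. One further caveat worth adding: even granting generation and fully faithfulness on generators, upgrading to an equivalence requires compatibility of $b_\psi$ with the t-structures and finiteness conditions defining $\cD^{\mathrm{b,qc}}_{\mathrm{coh,Nilp}}$, and verifying that $b_\psi$ of a compact object lands in \emph{bounded coherent} rather than merely quasi-coherent complexes is itself nontrivial and uses the ind-coherent/Ind-Perf formalism more seriously than the sketch suggests.
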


Since the moduli of L-parameters can be viewed as the moduli of local systems on the Fargues--Fontaine curve, in Conjecture \ref{conj:geomcj}, the left side is the category of etale objects on the moduli of coherent objects, and the right side is the category of coherent objects on the moduli of etale objects, so we can see a kind of symmetry. 
If $G$ is a torus, Conjecture \ref{conj:geomcj} is proved in \cite{ZoucatFartori}. 

We explain some of the relevant conjectures. 
In the following, $\widehat{G}$ and ${}^L G$ are considered over $\ol{\bQ}_{\ell}$ and denoted by the same symbols. 
We assume that $G$ is quasi-split. 
We fix a Whittaker data $(B,\psi)$ in $\ol{\bQ}_{\ell}$-coefficients for $G$, and write $\fw$ for its $G(F)$-conjugacy class.
First, we describe the Fargues conjecture stated in \cite[Conjecture 4.4]{FarGover}\footnote{In \cite[Conjecture 4.4]{FarGover}, a conjecture on the local-global compatibility is also stated, but we omit it here.}. 
Let $\phi$ be a discrete L-parameter of $\SL_2$-type in $\ol{\bQ}_{\ell}$-coefficients for $G$, and let $\varphi$ be the corresponding $\ell$-adic L-parameter. 
We put $S_{\varphi}=Z_{\wh{G}}(\varphi)$. 
Then $S_{\phi}=S_{\varphi}$ as in the proof of \cite[Proposition 3.15]{BMIYJMmor}\footnote{If $\phi$ is not discrete, then $S_{\phi}$ and $S_{\varphi}$ are different in general (\cite[Example 3.8]{BMIYJMmor}).}. 
For a cocharacter $\mu$ of $G_{F^{\mathrm{sep}}}$, let $E_{\mu}$ be the reflex field of $G(F^{\mathrm{sep}})$-conjugacy class of $\mu$, and we put $r_{\mu}=\Ind_{\wh{G} \rtimes W_{E_{\mu}}}^{{}^L G} r'_{\mu}$, where $r'_{\mu}$ is the $\wh{G} \rtimes W_{E_{\mu}}$-representation of highest weight $\mu$ (\cite[(2.1.2)]{KotShtw}). 

\begin{conj}[Fargues conjecture]\label{conj:Farconj}
There exists $\cF_{\varphi} \in \cD_{\lis} (\Bun_G,\ol{\bQ}_{\ell})$ with action of $S_{\varphi}$ which satisfy the following conditions: 
\begin{enumerate}
\item\label{en:Heceigen} 
For a cocharacter $\mu$ of $G_{F^{\mathrm{sep}}}$, we have 
\[
 p_{2,\natural} (p_1^* \cF_{\varphi} \stackrel{\solid}{\otimes} \cS'_{r_{\mu}}) \cong \cF_{\varphi} \boxtimes (r_{\mu} \circ \varphi ) \in \cD_{\solid} (\Bun_G \times \Div_X^1,\ol{\bQ}_{\ell}) . 
\]
\item\label{en:bdescLLC} 
For a basic element $b \in G(\breve{F})$, we have 
\[
 i^{[b],*} \cF_{\varphi} \cong \bigoplus_{\rho \in \Irr (S_{\phi}),\rho|_{Z(\wh{G})^{W_F}}=\kappa (b)} \rho \otimes \pi_{\varphi,b,\rho} , 
\]
where $\pi_{\varphi,b,\rho}$ denotes $\iota_{\fw}^{-1} (\rho) \in \Pi_{[b],\phi}$ in Conjecture \ref{conj:LL}. 
\item\label{en:Fcusp}
If $\varphi$ is cuspidal, $\cF_{\varphi} =j_{\natural} j^* \cF_{\varphi}$ where $j \colon \Bun_G^{\mathrm{ss}} \hookrightarrow \Bun_G$ is the natural open immersion. 
\end{enumerate}
\end{conj}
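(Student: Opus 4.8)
Since the statement is a conjecture rather than a theorem, the plan I would follow is to \emph{deduce} it from the categorical conjecture (Conjecture \ref{conj:geomcj}) together with the spectral action of \cite[Theorem X.0.1]{FaScGeomLLC}; this is the route along which the known cases proceed. First I would construct the candidate eigensheaf. Because $\phi$ is discrete and of $\SL_2$-type, the associated $\ell$-adic parameter $\varphi$ is Frobenius semisimple, so its $\wh{G}_{\ol{\bQ}_{\ell}}$-orbit in $Z^1 (W_F,\wh{G})_{\ol{\bQ}_{\ell}}$ is closed with stabiliser $S_{\varphi}$, and it determines a closed substack $[*/S_{\varphi}] \hookrightarrow \mathrm{LP}_{G,\ol{\bQ}_{\ell}}$. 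Let $\delta_{\varphi}$ be its structure sheaf, viewed as an object of $\mathrm{Ind}\Perf^{\mathrm{qc}} (\mathrm{LP}_{G,\ol{\bQ}_{\ell}})$, equipped with the tautological $S_{\varphi}$-action on its automorphisms. I would then set $\cF_{\varphi}=a_{\psi}(\delta_{\varphi}) \in \cD_{\lis} (\Bun_G,\ol{\bQ}_{\ell})$, i.e.\ apply to the Whittaker sheaf $\cW_{\psi}$ the spectral action of $\delta_{\varphi}$; the $S_{\varphi}$-action transports to $\cF_{\varphi}$.

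The Hecke eigensheaf property \ref{en:Heceigen} should then be essentially formal. The spectral action is constructed precisely so that its composite with $V \mapsto (\cO_{Z^1 (W_F,\wh{G})_{\ol{\bQ}_{\ell}}} \otimes V)/\wh{G}_{\ol{\bQ}_{\ell}}$ recovers the Hecke functors of \eqref{eq:Hecfun}, compatibly with the $BW_F^I$-structures. Taking $I=\{*\}$ and $V=r_{\mu}$, one has that $\delta_{\varphi} \otimes ((\cO_{Z^1 (W_F,\wh{G})_{\ol{\bQ}_{\ell}}} \otimes r_{\mu})/\wh{G}_{\ol{\bQ}_{\ell}})$ restricts over the closed orbit $[*/S_{\varphi}]$ to $\delta_{\varphi} \otimes (r_{\mu} \circ \varphi)$ with its residual $W_F$-action; feeding this through the spectral/Hecke compatibility yields $p_{2,\natural}(p_1^* \cF_{\varphi} \stackrel{\solid}{\otimes} \cS'_{r_{\mu}}) \cong \cF_{\varphi} \boxtimes (r_{\mu} \circ \varphi)$ over $\Bun_G \times \Div_X^1$. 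The one substantive point is to match the $W_F$-action manufactured by the geometric (Satake/$\Div_X^1$) side with the arithmetic action $r_{\mu} \circ \varphi$, which is again part of the content of \cite[Theorem X.0.1]{FaScGeomLLC}.

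For the cleanness statement \ref{en:Fcusp}, if $\varphi$ is cuspidal then $\delta_{\varphi}$ is supported at an isolated closed orbit, and I would argue that $\cF_{\varphi}$ is clean along the open semistable stratum $j \colon \Bun_G^{\mathrm{ss}} \hookrightarrow \Bun_G$: the eigensheaf condition \ref{en:Heceigen} forces the $*$- and $!$-restrictions of $\cF_{\varphi}$ to each non-semistable Harder--Narasimhan stratum $\Bun_G^{[b]}$ to be controlled by (proper) geometric parabolic induction, and these vanish for a cuspidal parameter by the vanishing behaviour of the geometric Eisenstein functors; hence $\cF_{\varphi} \cong j_{\natural} j^* \cF_{\varphi}$. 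Part \ref{en:bdescLLC} would then be approached by restricting $\cF_{\varphi}$ to $\Bun_G^{[b]} \cong [*/\wt{G}_b]$ for basic $b$ and translating through the equivalence $\cD_{\lis} (\Bun_G^{[b]},\ol{\bQ}_{\ell}) \cong D(G_b(F),\ol{\bQ}_{\ell})$ of \eqref{eq:lissmb}, reading off the $\wh{G}$-central character to obtain the constraint $\rho|_{Z(\wh{G})^{W_F}} = \kappa(b)$ from the Kottwitz map.

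The main obstacle is precisely \ref{en:bdescLLC}: identifying the $G_b(F)$-representations that appear with the members $\pi_{\varphi,b,\rho} = \iota_{\fw}^{-1}(\rho)$ of the $L$-packet normalised as in Conjecture \ref{conj:LLb}. Even granting Conjecture \ref{conj:geomcj}, one must (a) pin down $\cF_{\varphi}$ as the image of an explicit coherent sheaf concentrated at $[\varphi]$ on $\mathrm{LP}_{G,\ol{\bQ}_{\ell}}$ and compute its restriction to the basic strata, and (b) match the output with the Whittaker normalisation $\iota_{\fw}$ --- in particular that the unique $\fw$-generic representation in $\Pi_{[1],\phi}$ corresponds to the trivial representation of $S_{\phi}$. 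This amounts to the compatibility of the Fargues--Scholze parametrisation with the internal structure of $L$-packets coming from an independent construction of the local Langlands correspondence, via the cohomology of local Shimura varieties and moduli of mixed-characteristic shtukas; at present this comparison is available only in special cases (inner forms of $\GL_n$, $\GSp_4$, $\GU_n$, representations with nonzero parahoric fixed vectors, and the equal-characteristic setting), and carrying it out in general is the crux of the conjecture.
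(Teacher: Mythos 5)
This statement is a conjecture, not a theorem; the paper does not prove it but only presents a conjectural candidate eigensheaf built from the spectral action and surveys the special cases where the conjecture is known (cuspidal parameters for $\GL_2$ and $\GL_n$, certain generic parameters for $\GL_n$, and tori). Your overall framework --- apply the spectral action $a_{\psi}$ to a coherent complex concentrated at $\varphi$, derive property~(1) formally from the compatibility of the spectral action with the Hecke functor~\eqref{eq:Hecfun}, and identify~(2) as the point at which one must feed in compatibility with a normalised local Langlands correspondence --- is the same as the paper's sketch, and your assessment that~(2) is the crux (proved only in the special cases you list) matches the paper.

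However, your input to the spectral action is not the object the paper uses, and the difference is substantive. You take $\delta_{\varphi}=\cO_{[\ast/S_{\varphi}]}$ (the structure sheaf of the residual gerbe pushed forward to $\mathrm{LP}_{G,\ol{\bQ}_{\ell}}$) and claim it carries a ``tautological $S_{\varphi}$-action on its automorphisms.'' But $\cO_{[\ast/S_{\varphi}]}$, as a quasi-coherent sheaf on the gerbe, corresponds to the \emph{trivial} $S_{\varphi}$-representation: the inertia action of $S_{\varphi}$ on it is trivial and $\End(\cO_{[\ast/S_{\varphi}]})=\ol{\bQ}_{\ell}$, so $a_{\psi}(\delta_{\varphi})$ would carry no interesting $S_{\varphi}$-action and could not produce the direct sum in~(2). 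The paper instead takes $\cE_{\varphi}=f_{\varphi,*}\ol{\bQ}_{\ell}$, the pushforward of $\ol{\bQ}_{\ell}$ along the point $f_{\varphi}\colon \Spec\ol{\bQ}_{\ell}\to\mathrm{LP}_{G,\ol{\bQ}_{\ell}}$; since $f_{\varphi}$ factors through $i_{\varphi}\colon[\Spec\ol{\bQ}_{\ell}/S_{\varphi}]\hookrightarrow\mathrm{LP}_{G,\ol{\bQ}_{\ell}}$, over the gerbe $\cE_{\varphi}$ is the regular representation of $S_{\varphi}$, and it is precisely this that furnishes the nontrivial $S_{\varphi}$-action on $\mathrm{Aut}_{\varphi}=a_{\psi}(\cE_{\varphi})$ needed for the decomposition $\bigoplus_{\rho}\rho\otimes\pi_{\varphi,b,\rho}$ in~(2). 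Replace your $\delta_{\varphi}$ by $\cE_{\varphi}$; the rest of your discussion (including the cleanness argument for~(3) via vanishing of geometric Eisenstein contributions) is consistent with the paper.
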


The condition \ref{en:Heceigen} of Conjecture \ref{conj:Farconj} is called the Hecke eigensheaf property. 
If the L-parameter is cuspidal, Conjecture \ref{conj:Farconj} is proved in \cite{GINsemi} for $\GL_2$ and minuscule $\mu$, and in \cite{AnLBAvGLn}, \cite {HansclocSh} for $GL_n$. 
In \cite{NgucatGLn}, it is proved for L-parameters of $\GL_n$ which are not necessarily cuspidal, under certain genericity condition. 

Next, we explain a relationship between Conjecture \ref{conj:Farconj} and $a_{\psi}$ appearing in Conjecture \ref{conj:geomcj}. 
Let 
\[
f_{\varphi} \colon \Spec \ol{\bQ}_{\ell} \to \mathrm{LP}_{G,\ol{\bQ}_{\ell}}  
\]
be the morphism given by $\varphi$, and we put 
\[
 \cE_{\varphi} =f_{\varphi,*} \ol{\bQ}_{\ell} \in 
 \cD_{\mathrm{qcoh}} (\mathrm{LP}_{G,\ol{\bQ}_{\ell}})
 \cong \Ind \Perf^{\mathrm{qc}} (\mathrm{LP}_{G,\ol{\bQ}_{\ell}}), 
\]
where $\cD_{\mathrm{qcoh}}$ denotes the stable $\infty$-category of quasi-coherent sheaves, and its equivalence with $\Ind \Perf^{\mathrm{qc}}$ follows from \cite[Corollary 3.22]{BFNIntDri}. 
Since $f_{\varphi}$ factors through $i_{\varphi} \colon [\Spec \ol{\bQ}_{\ell}/S_{\varphi}] \hookrightarrow \mathrm{LP}_{G,\ol{\bQ}_{\ell}}$, there is a natural action of $S_{\varphi}$ on $\cE_{\varphi}$. 
Further we put  
\[
 \mathrm{Aut}_{\varphi}=a_{\psi} (\cE_{\varphi}) \in \cD_{\lis} (\Bun_G,\ol{\bQ}_{\ell}). 
\]
Then the action of $S_{\varphi}$ on $\mathrm{Aut}_{\varphi}$ is also induced. 

\begin{conj}
$\mathrm{Aut}_{\varphi}$ gives $\cF_{\varphi}$ in Conjecture \ref{conj:Farconj}. 
\end{conj}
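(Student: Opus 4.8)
The plan is to verify that $\mathrm{Aut}_{\varphi}=a_{\psi}(\cE_{\varphi})$ satisfies the three conditions listed in Conjecture \ref{conj:Farconj}, using as the main inputs the $\Perf(\mathrm{LP}_{G,\ol{\bQ}_{\ell}})$-linearity of the spectral action, its compatibility with the Hecke functors \eqref{eq:Hecfun}, the Whittaker normalization $a_{\psi}(\cO_{\mathrm{LP}_{G,\ol{\bQ}_{\ell}}})\cong\cW_{\psi}$ built into $a_{\psi}$, and the $S_{\varphi}$-equivariant structure on $\cE_{\varphi}=f_{\varphi,*}\ol{\bQ}_{\ell}$ coming from the factorization of $f_{\varphi}$ through $i_{\varphi}\colon[\Spec\ol{\bQ}_{\ell}/S_{\varphi}]\hookrightarrow\mathrm{LP}_{G,\ol{\bQ}_{\ell}}$.

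For Conjecture \ref{conj:Farconj} \ref{en:Heceigen} (the Hecke eigensheaf property): by construction the composite of $V\mapsto(\cO_{Z^1(W_F,\wh{G})_{\ol{\bQ}_{\ell}}}\otimes V)/\wh{G}_{\ol{\bQ}_{\ell}}$ with the spectral action is the functor \eqref{eq:Hecfun}, so computing $p_{2,\natural}(p_1^*\mathrm{Aut}_{\varphi}\stackrel{\solid}{\otimes}\cS'_{r_{\mu}})=T_{r_{\mu}}(\mathrm{Aut}_{\varphi})$ reduces to acting on $\cE_{\varphi}$ by the perfect complex $(\cO_{Z^1(W_F,\wh{G})}\otimes r_{\mu})/\wh{G}$. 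Since $\cE_{\varphi}$ is the pushforward of the structure sheaf of a point, the projection formula identifies this with $\cE_{\varphi}\otimes_{\ol{\bQ}_{\ell}}(r_{\mu}\circ\varphi)$, where the residual $W_F$-action on the fibre $r_{\mu}|_{\varphi}$ is through $\varphi$; transporting through $a_{\psi}$ and recording the $W_F$-equivariance by \eqref{eq:Tlin} yields $\mathrm{Aut}_{\varphi}\boxtimes(r_{\mu}\circ\varphi)$, with the $S_{\varphi}$-action the one induced from $i_{\varphi}$. Functoriality in $I$ and in $\mu$ promotes this to the full statement.

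The case $b=1$ of Conjecture \ref{conj:Farconj} \ref{en:bdescLLC} should follow from the Whittaker normalization together with the (conjectural) fully faithfulness in Conjecture \ref{conj:geomcj}: one compares $R\Hom_{\lis}(\cW_{\psi},\mathrm{Aut}_{\varphi})$ with $R\Hom(\cO_{\mathrm{LP}_{G,\ol{\bQ}_{\ell}}},\cE_{\varphi})$, and a multiplicity-one argument for Whittaker functionals isolates the $\fw$-generic member of $\Pi_{[1],\phi}$ and matches it with the trivial representation of $S_{\phi}$, as predicted in Conjecture \ref{conj:LLb}. For a general basic $b$ one combines the eigensheaf property and the semisimplicity of $\varphi$ (which give that every irreducible subquotient of $i^{[b],*}\mathrm{Aut}_{\varphi}$ has Fargues--Scholze parameter the semisimplification of $\varphi$), a Kottwitz-conjecture-type computation of $i^{[b],*}T_{r_{\mu}}\mathrm{Aut}_{\varphi}$ in terms of $r_{\mu}\circ\varphi$, and the $S_{\varphi}$-equivariance, which forces the multiplicity spaces to be the $\rho\in\Irr(S_{\phi})$ with central character $\kappa(b)$ and pins down $\pi_{\varphi,b,\rho}=\iota_{\fw}^{-1}(\rho)$; here one uses $S_{\phi}=S_{\varphi}$ for discrete $\phi$. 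Finally, for Conjecture \ref{conj:Farconj} \ref{en:Fcusp}: if $\varphi$ is cuspidal it does not factor through any proper parabolic of ${}^L G$, so on the spectral side $\cE_{\varphi}$ is supported off the images of the maps $\mathrm{LP}_M\to\mathrm{LP}_G$ attached to proper Levi subgroups $M$; combined with the compatibility of the spectral action with geometric constant-term functors this forces every proper-parabolic constant term of $\mathrm{Aut}_{\varphi}$ to vanish, and Fargues's Harder--Narasimhan stratification argument then gives $\mathrm{Aut}_{\varphi}=j_{\natural}j^*\mathrm{Aut}_{\varphi}$ for $j\colon\Bun_G^{\mathrm{ss}}\hookrightarrow\Bun_G$.

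The main obstacle is condition \ref{en:bdescLLC}: identifying $i^{[b],*}\mathrm{Aut}_{\varphi}$ on the nose requires both the still-open fully faithfulness of Conjecture \ref{conj:geomcj} and the compatibility of the Fargues--Scholze construction with the refined local Langlands correspondence of Conjecture \ref{conj:LLb}, which is presently known only in the cases recorded after Conjecture \ref{conj:LLCcat} (for instance $\GL_n$ and its inner forms, $\GSp_4$, $\GU_n$, and the parahoric case). Without these, one can at best reach the statement up to semisimplification of the $L$-packet, or conditionally on Conjecture \ref{conj:geomcj}.
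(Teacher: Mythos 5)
The statement you are asked to prove is labeled as a conjecture in the paper, and the paper offers no proof: it is open. Immediately after stating the conjecture the paper simply remarks that it is easy to verify that $\mathrm{Aut}_{\varphi}$ satisfies the Hecke eigensheaf property, and that whether it satisfies condition \ref{en:bdescLLC} of Conjecture \ref{conj:Farconj} is ``a question.'' So there is nothing in the paper for your argument to be checked against line by line; the correct assessment is whether your sketch is a faithful account of what is and is not known.

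On that score you do well. Your verification of condition \ref{en:Heceigen} via the compatibility of the spectral action with the Hecke functor \eqref{eq:Hecfun}, the projection formula for the pushforward $\cE_{\varphi}=f_{\varphi,*}\ol{\bQ}_{\ell}$, and the $\Rep_{\Lambda}((\cW_F^I)_{\Lambda})$-linearity \eqref{eq:Tlin} is exactly the ``easy to verify'' observation the paper alludes to, and you correctly flag condition \ref{en:bdescLLC} as the genuine obstacle, resting as it does on the unproven equivalence of Conjecture \ref{conj:geomcj} and on compatibility with the refined correspondence of Conjecture \ref{conj:LLb}. The one place you go beyond the paper is your treatment of condition \ref{en:Fcusp}: the argument that cuspidality of $\varphi$ forces the constant terms of $\mathrm{Aut}_{\varphi}$ along all proper parabolics to vanish, and hence that $\mathrm{Aut}_{\varphi}$ is concentrated on the semistable locus, is a plausible heuristic but is not established in the paper; it relies on a compatibility between the spectral action and geometric constant-term functors that is not proved in \cite{FaScGeomLLC}, and on the (nontrivial) assertion that vanishing of all proper constant terms implies support on $\Bun_G^{\mathrm{ss}}$. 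You should present that part as a further conjectural reduction rather than as a step that can be ``combined'' with known facts. With that caveat, your proposal is an honest and accurate map of the conditional status of this conjecture, not a proof, and it should not be read as one.
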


It is easy to verify that $\mathrm{Aut}_{\varphi}$ satisfies the Hecke eigensheaf property, and a question is whether it satisfies the condition of \ref{en:bdescLLC} in Conjecture \ref{conj:Farconj}. 

In the following, we assume that $\phi$ is cuspidal, and explain a conjecture that categorifies the bijection $\iota_{\fw}$ in Conjecture \ref{conj:LLb}. 
The unramified twists of $\varphi$ gives a connected component $C_{\varphi}$ of $\mathrm{LP}_{G,\ol{\bQ}_{\ell}}$. 
Considering the idempotent element of $\mathrm{Exc} (W_F,\wh{G})$ determined from the connected component corresponding to $C_{\varphi}$ under \eqref{eq:ExO}, let 
\[
\cD_{\lis}^{C_{\varphi}} (\Bun_G,\ol{\bQ}_{\ell})^{\omega} \subset \cD_{\lis} (\Bun_G,\ol{\bQ}_{\ell})^{\omega}
\]
be the direct summand given by the action of that idempotent. 
Let $\varphi^{\rmab} \colon W_F \to {}^L G^{\rmab}$ be the L-parameter of $G^{\rmab}$ determined from $\varphi$, 
and $\chi$ be the corresponding character of $Z(G)^{\circ}(F)$. 
Let $\cD_{\lis}^{C_{\varphi},\chi} (\Bun_G,\ol{\bQ}_{\ell})^{\omega}$ be the full stable $\infty$-subcategory of objects of $\cD_{\lis}^{C_{\varphi}} (\Bun_G,\ol{\bQ}_{\ell})^{\omega}$ such that, after the pullback to $\Bun_G^{[b]}$, the actions of $Z(G_b)^{\circ}(F) \cong Z(G)^{\circ}(F)$ on the corresponding representations are $\chi$ for any $[b] \in B(G)_{\mathrm{basic}}$. 
For $W \in \Perf ([\Spec \ol{\bQ}_{\ell}/S_{\varphi}])$, we write $\mathrm{Act}_W$ for the spectral action of the object of $\Perf (\mathrm{LP}_{G,\ol{\bQ}_{\ell}})$ given by the pullback of $W$ under the natural morphism $C_{\varphi} \to [\Spec \ol{\bQ}_{\ell}/S_{\varphi}]$ and the pushforward along $C_{\varphi} \hookrightarrow \mathrm{LP}_{G,\ol{\bQ}_{\ell}}$. 

\begin{conj}
There exists a unique generic representation $\pi$ of $G(F)$ whose L-parameter is $\varphi$, and the functor 
\[
 \Perf ([\Spec \ol{\bQ}_{\ell}/S_{\varphi}]) \to 
 \cD_{\lis}^{C_{\varphi},\chi} (\Bun_G,\ol{\bQ}_{\ell})^{\omega} ;\ 
 W \mapsto \mathrm{Act}_{W}(i^{[1]}_{\natural} \pi) 
\]
is an equivalence of stable $\infty$-categories. 
Furthermore, for $\rho \in \Irr (S_{\varphi})$, if we take $[b]\in B(G)_{\mathrm{basic}}$ such that the restriction of $\rho$ to $Z(\wh{G})^{W_F}$ is equal to $\kappa_G([b])$, then, the object of $\cD_{\lis}^{C_{\varphi},\chi} (\Bun_G,\ol{\bQ}_{\ell})^{\omega}$ given by $\rho \in \Irr (S_{\varphi})$ under the above equivalence coincides with the image of $\iota_{\fw}^{-1} (\rho) \in \Pi_{[b],\phi}$ in Conjecture \ref{conj:LL} under $i^{[b]}_{\natural}$. 
\end{conj}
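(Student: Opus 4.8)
The plan is to deduce the statement from the categorical conjecture (Conjecture \ref{conj:geomcj}), the Fargues conjecture (Conjecture \ref{conj:Farconj}) and the refined local Langlands correspondence (Conjecture \ref{conj:LLb}), together with structural facts about the connected component $C_{\varphi}$ of $\mathrm{LP}_{G,\ol{\bQ}_{\ell}}$ and about the Bernstein block attached to $\varphi$. First I would record that, since $\phi$ is cuspidal and hence discrete, $S_{\varphi}/Z(\wh{G})^{W_F}$ is finite, the component $C_{\varphi}$ consists precisely of the unramified twists of $\varphi$, and the fibre over $\chi$ of the central-character morphism from $C_{\varphi}$ to the relevant torus is $[\Spec \ol{\bQ}_{\ell}/S_{\varphi}]$. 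This last stack is smooth with Artinian coarse space, so the nilpotent singular support condition is vacuous over it and the restriction of $\cD_{\mathrm{coh},\mathrm{Nilp}}^{\mathrm{b},\mathrm{qc}}(\mathrm{LP}_{G,\ol{\bQ}_{\ell}})$ to the $(C_{\varphi},\chi)$-summand is $\Perf ([\Spec \ol{\bQ}_{\ell}/S_{\varphi}])$; here the hypothesis $\ell \nmid \lvert \pi_1(\wh{G})_{\mathrm{tor}} \rvert$ ensures $\cD^{\mathrm{b}}_{\mathrm{coh}} = \Perf$ on this stack.

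Next I would pin down $\pi$ and the image of $i^{[1]}_{\natural}\pi$ under the equivalence of Conjecture \ref{conj:geomcj}. The last clause of Conjecture \ref{conj:LLb} gives a unique $\fw$-generic $\pi \in \Pi_{[1],\phi}$, corresponding to the trivial representation of $S_{\phi}=S_{\varphi}$ under $\iota_{\fw}$; granting that its Fargues--Scholze parameter is $\varphi$ (known in many cases), the Bernstein block of $\pi$ is the block of its unramified twists, matching $C_{\varphi}$, and $\pi$ has central character $\chi$, so $i^{[1]}_{\natural}\pi \in \cD_{\lis}^{C_{\varphi},\chi}(\Bun_G,\ol{\bQ}_{\ell})^{\omega}$. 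The functor $a_{\psi}$ sends $\cO_{\mathrm{LP}_{G,\ol{\bQ}_{\ell}}}$ to $\cW_{\psi}=i^{[1]}_{\natural}(\cInd_{R_{\mathrm{u}}(B)(F)}^{G(F)}\psi)$; cutting out the $(C_{\varphi},\chi)$-summand, the automorphic side becomes $i^{[1]}_{\natural}\pi$, because $\pi$ is supercuspidal and $\fw$-generic so the $(C_{\varphi},\chi)$-part of $\cInd_{R_{\mathrm{u}}(B)(F)}^{G(F)}\psi$ is $\pi$ with multiplicity one, while the spectral side becomes $\cO_{[\Spec \ol{\bQ}_{\ell}/S_{\varphi}]}$, i.e.\ the trivial representation $\mathbf{1}$ of $S_{\varphi}$. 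Hence under the equivalence $i^{[1]}_{\natural}\pi \longmapsto \mathbf{1}$ (equivalently $a_{\psi}(\mathbf{1}) \cong i^{[1]}_{\natural}\pi$, which is also the $\rho=\mathbf{1}$ part of the expected identification $\mathrm{Aut}_{\varphi} \cong \cF_{\varphi}$).

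Granting this, the functor in the statement becomes transparent. For $W \in \Perf ([\Spec \ol{\bQ}_{\ell}/S_{\varphi}])$, the object $\mathrm{Act}_W$ of $\Perf (\mathrm{LP}_{G,\ol{\bQ}_{\ell}})$ is the pushforward to $\mathrm{LP}_{G,\ol{\bQ}_{\ell}}$ of the pullback of $W$ along $C_{\varphi} \to [\Spec \ol{\bQ}_{\ell}/S_{\varphi}]$, and under the $\Perf (\mathrm{LP}_{G,\ol{\bQ}_{\ell}})$-equivariant equivalence its action on a sheaf supported in the $(C_{\varphi},\chi)$-summand is tensoring in $\Rep_{\ol{\bQ}_{\ell}}(S_{\varphi})$. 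Thus $W \mapsto \mathrm{Act}_W(i^{[1]}_{\natural}\pi)$ corresponds to $W \mapsto W \otimes \mathbf{1} = W$, the identity on $\Perf ([\Spec \ol{\bQ}_{\ell}/S_{\varphi}]) \cong \cD_{\lis}^{C_{\varphi},\chi}(\Bun_G,\ol{\bQ}_{\ell})^{\omega}$, hence an equivalence. For the compatibility assertion, given $\rho \in \Irr (S_{\varphi})$ the object $\mathrm{Act}_{\rho}(i^{[1]}_{\natural}\pi)$ corresponds to $\rho$; since $\varphi$ is cuspidal, the $S_{\varphi}$-isotypic decomposition forced by part \ref{en:bdescLLC} of Conjecture \ref{conj:Farconj} shows that its restriction to every basic stratum other than the one with $\kappa_G([b])=\rho|_{Z(\wh{G})^{W_F}}$ vanishes, while the cleanness of part \ref{en:Fcusp} of Conjecture \ref{conj:Farconj} puts it on the union of the basic strata; applying $i^{[b],*}$ and invoking part \ref{en:bdescLLC} of Conjecture \ref{conj:Farconj} identifies it with $\iota_{\fw}^{-1}(\rho)=\pi_{\varphi,b,\rho}$, and cleanness then upgrades this to $\mathrm{Act}_{\rho}(i^{[1]}_{\natural}\pi) \cong i^{[b]}_{\natural}\iota_{\fw}^{-1}(\rho)$, as claimed.

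The main obstacle, beyond the dependence on Conjectures \ref{conj:geomcj} and \ref{conj:Farconj}, is the cleanness and purity input for cuspidal parameters: both the identification of the $(C_{\varphi},\chi)$-part of $\cW_{\psi}$ with $i^{[1]}_{\natural}\pi$, and, more seriously, the fact that $\mathrm{Act}_{\rho}$ transports the sheaf supported on the single stratum $\Bun_G^{[1]}$ precisely onto the single basic stratum $\Bun_G^{[b]}$ prescribed by $\kappa_G([b])=\rho|_{Z(\wh{G})^{W_F}}$. This is exactly where part \ref{en:bdescLLC} of Conjecture \ref{conj:Farconj}, rather than the Hecke eigensheaf property or the categorical equivalence alone, is indispensable, matching the remark that verifying \ref{en:bdescLLC} for $\mathrm{Aut}_{\varphi}$ is the delicate point. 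When $G$ is a torus all the required inputs are theorems (Conjecture \ref{conj:geomcj} being known in that case by \cite{ZoucatFartori}), so the statement holds unconditionally there, which provides a useful consistency check.
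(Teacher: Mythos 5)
The statement you were asked to prove is the \emph{final conjecture} of the paper; the paper states it without any argument, so there is no proof in the text against which to compare your writeup. What you have written is, by design, a conditional derivation: it deduces the conjecture from Conjectures~\ref{conj:LLb}, \ref{conj:geomcj}, \ref{conj:Farconj}, the identification $\mathrm{Aut}_{\varphi}\cong\cF_{\varphi}$, and compatibility of the Fargues--Scholze parameters with the classical local Langlands correspondence, all of which are themselves open. You acknowledge this at the outset, and the chain of reductions you give (restrict the categorical equivalence to the $(C_{\varphi},\chi)$-summand, identify $i^{[1]}_{\natural}\pi$ with the trivial $S_{\varphi}$-representation via the Whittaker normalization, observe that the spectral action becomes tensoring in $\Rep(S_{\varphi})$, then use cleanness and the $[b]$-by-$[b]$ description of $\cF_{\varphi}$ to pin down where each $\rho$ lands) is exactly the heuristic that the surrounding exposition in the paper is setting up. So as a reconstruction of the intended logic it is sound, but it is not, and cannot be, a proof of the statement.

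Two smaller points worth flagging. First, the conjecture is stated over $\Lambda=\ol{\bQ}_{\ell}$, so the hypothesis $\ell\nmid\lvert\pi_1(\wh{G})_{\mathrm{tor}}\rvert$ you invoke to get $D^{\mathrm{b}}_{\mathrm{coh}}=\Perf$ on $[\Spec\ol{\bQ}_{\ell}/S_{\varphi}]$ belongs to the integral case of the hypotheses on $\Lambda$ and is not needed here. Second, the assertion that ``the $(C_{\varphi},\chi)$-part of $\cInd_{R_{\mathrm{u}}(B)(F)}^{G(F)}\psi$ is $\pi$ with multiplicity one'' conceals a genuine step: the Bernstein block of $\pi$ contains all unramified twists of $\pi$, and one must argue that imposing the $Z(G)^{\circ}(F)$-central character $\chi$ cuts the generic constituents of the big Whittaker module in that block down to a single copy of $\pi$; this is plausible (and uses supercuspidality and uniqueness of Whittaker models) but should be stated as an input rather than asserted.
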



\section*{Acknowledgements} 
The author is grateful to Jean-Fran\c{c}ois Dat, Laurent Fargues, Teruhisa Koshikawa, Peter Scholze and Xinwen Zhu for answering questions during the writing of this paper. 
He also thanks the referees for their helpful comments.


\vspace*{1em}

\noindent
Naoki Imai\\
Graduate School of Mathematical Sciences, The University of Tokyo, 
3-8-1 Komaba, Meguro-ku, Tokyo, 153-8914, Japan \\
naoki@ms.u-tokyo.ac.jp 

\end{document}